\theoremstyle{plain}
\newtheorem{thm}{Theorem}[section]
\newtheorem{prop}[thm]{Proposition}
\newtheorem{lem}[thm]{Lemma}
\newtheorem{cor}[thm]{Corollary}
\newtheorem{example}[thm]{Example}
\theoremstyle{remark}
\newtheorem{remark}[thm]{Remark}
\theoremstyle{definition}
\newtheorem{dfn}[thm]{Definition}
\newcommand{\N}{\mathbb{N}}
\newcommand{\Z}{\mathbb{Z}}
\newcommand{\T}{\mathbb{T}}
\newcommand{\Aut}{\operatorname{Aut}}
\newcommand{\calC}{\mathcal{C}}
\newcommand{\calD}{\mathcal{D}}
\newcommand{\lact}{\vartriangleright}
\newcommand{\ract}{\vartriangleleft}
\newcommand{\into}{\hookrightarrow}
\newcommand{\id}{\operatorname{id}}
\definecolor{deepblue}{RGB}{0,19,222}
\title{Products, crossed products, and Zappa--Sz\'ep products for $k$-graphs}
\author[A. Abell-Ball]{Adlin Abell-Ball}
\address{Adlin Abell-Ball,  Department of Mathematical Sciences, University of Montana, Missoula, MT 59812-0864, USA} 
\email{adlinfaball@gmail.com}
\author[E. Gillaspy]{Elizabeth Gillaspy}
\address{Elizabeth Gillaspy,  Department of Mathematical Sciences, University of Montana, Missoula, MT 59812-0864, USA} \email{elizabeth.gillaspy@mso.umt.edu}
\author[G. Glidden-Handgis]{George Glidden-Handgis}
\address{George Glidden-Handgis, University of Arizona College of Pharmacy, 
Tucson, AZ 85721, USA}
\email{georgeglidden@arizona.edu}
\author[S.J. Lippert]{S. Joseph Lippert}
\address{S. Joseph Lippert, Department of Mathematics and Statistics, Sam Houston State University, Huntsville, Texas 77341, USA}
\email{sjl054@shsu.edu}
\date{\today}
\thanks{This research was supported by the US National Science Foundation (grant DMS-1800749 to E.G.).}
\subjclass[2020]{Primary 46L05; Secondary 05C75, 05C76}
\keywords{Higher-rank graph; Zappa--Sz\'ep product; product graph}
\begin{document}

\begin{abstract}
    We use the lens of Zappa--Sz\'ep decomposition to examine the relationship between directed graph products and $k$-graph products.  
    There are many examples of higher-rank graphs, or $k$-graphs, whose underlying directed graph may be factored as a product, but the $k$-graph itself is not a product.
    In such examples, we establish that 
    the Zappa--Sz\'ep structure of the $k$-graph  
    gives rise to ``actions'' of the underlying directed factors on each other.  
    Although these ``actions'' are in general poorly behaved, 
    if one of  them 
    is trivial (or trivial up to isomorphism), we obtain  
    a crossed-product-like structure on the $k$-graph. We 
    provide examples where this crossed-product structure is 
    visible in the associated $C^*$-algebra, and we characterize those $k$-graphs whose Zappa--Sz\'ep induced actions are trivial up to isomorphism.   
\end{abstract}
\maketitle 
\section{Introduction}

Graph $C^*$-algebras have provided key examples and inspiration for $C^*$-algebra theory, since the underlying directed graph determines much of the structure of the associated $C^*$-algebra.  For example, the $K$-theory \cite{raeburn-szyman} and the lattice of ideals \cite{bhrs} of a graph $C^*$-algebra are completely determined by the underlying graph; and for finite graphs $E, F$, there is a purely graph-theoretic description \cite{compclass} of when $C^*(E) \cong C^*(F)$. 
This close link between the graphical  and the $C^*$-algebraic structure also implies that many important examples of $C^*$-algebras (such as the irrational rotation algebras \cite{evans-sims}) 
do not arise as graph $C^*$-algebras.

Higher-rank graphs, or $k$-graphs, were introduced by Kumjian and Pask in \cite{kp} 
to provide
computable, combinatorial models of a broader class of $C^*$-algebras.  
 While the class of $k$-graph $C^*$-algebras includes many  examples which are not graph $C^*$-algebras (cf.~\cite{prrs, pss-rr0, stable-exotic}), constructing concrete examples of $k$-graphs, and computing the associated $C^*$-algebras, is often surprisingly subtle.

One straightforward way to create examples of $k$-graphs is the product $k$-graph construction.  
Higher-rank graphs form a monoidal category; in particular, the (Cartesian) product of two higher-rank graphs is again a higher-rank graph.  Moreover \cite[Corollary 3.5(iv)]{kp}, the $C^*$-algebra of a product $k$-graph $\Omega = \Lambda \times \Gamma$ is the tensor product of the factor $C^*$-algebras, $C^*(\Omega) \cong C^*(\Lambda) \otimes C^*(\Gamma)$.

Our goal in this paper is to  identify generalizations of the product $k$-graph construction in which, as in the product graph case, the $k$-graph  structure encodes $C^*$-algebraic structure.  To do this we use the framework of Zappa--Sz\'ep products.
 
A Zappa--Sz\'ep product $G \bowtie H$ is a generalization of a semidirect product $G \rtimes H$ which encodes compatible actions of $G$ on $H$ and of $H$ on $G$.  Originally studied for groups \cite{zappa, szep}, Zappa--Sz\'ep products of semigroups \cite{kunze, brin, lawson}, groupoids \cite{groupoids-blends, duwenig-li}, and more general categories \cite{matched-pair} have also been developed in recent years. 
In \cite[Lemma 3.27]{matched-pair}, Mundey and Sims show  that every $(k_1 + k_2)$-graph $\Omega$  
decomposes as a Zappa--Sz\'ep product of a $k_1$-graph and a $k_2$-graph, $\Omega = \Omega_1 \bowtie \Omega_2$.   
However, their decomposition does not mesh neatly with the $C^*$-algebraic structure. 
For example, a product should be a special case of a Zappa--Sz\'ep product, where both $G$ and $H$ act trivially on each other, yet the Mundey--Sims factors $\Omega_1, \Omega_2$ of a product $k$-graph $\Omega = \Lambda \times \Gamma$ do not coincide with $\Lambda$ and $\Gamma$.  in 
In particular, $C^*(\Omega) \not\cong C^*(\Omega_1) \otimes C^*(\Omega_2)$. 

In this paper, we refine the Mundey--Sims approach to $k$-graph Zappa--Sz\'ep products using a graphical perspective on higher-rank graphs. Formally, a $k$-graph is a countable category $\Omega$ together with a degree functor $d: \Omega \to \N^k$ which satisfies a certain factorization property (see Definition \ref{def:kgraph} below). However (cf.~\cite{pqr, hrsw, efgggp}) one can equivalently  view a $k$-graph  as a quotient of an edge-colored directed graph with $k$ colors of edges. Indeed, if $\Omega = \Lambda \times \Gamma$ is a product $k$-graph, then the underlying directed graph of $\Omega$ is the box product, or Cartesian product, of the graphs of the factors: $G(\Omega) \cong G(\Lambda) \square G(\Gamma).$   (The converse is not true, however; see Example \ref{ex: bouq} below.) 

If $G(\Omega) \cong G(\Lambda\times\Gamma)$, then as we observe in Proposition \ref{dis-union-prop} below, the graphs $G(\Omega_1), G(\Omega_2)$ of the Mundey--Sims factors $\Omega_1, \Omega_2$ decompose as disjoint unions in a way which 
is compatible with the product  structure of $G(\Omega)$: 
\begin{equation} G(\Omega_1) \cong \bigsqcup_{x \in \Gamma^0} G(\Lambda) \times \{ x\}, \qquad  G(\Omega_2) \cong \bigsqcup_{v \in \Lambda^0} \{ v\} \times G(\Gamma). 
\label{eq:graphs}
\end{equation}
Because of this, when  a $k$-graph $\Omega$ satisfies $G(\Omega) \cong G(\Lambda\times\Gamma)$ (we call such $k$-graphs {\em quasi-products}), we can reinterpret the Zappa--Sz\'ep actions $\lact: \Omega_1 \to \Aut(\Omega_2), \ \ract: \Omega_2 \to \Aut(\Omega_1)$ from \cite{matched-pair} as ``actions'' of $G(\Lambda)$ and $G(\Gamma)$ on each other. 
In general, these ``actions'' barely deserve the name, as Examples \ref{ex:rho-non-comp} and \ref{ex:rho-non-isom} highlight. However, when $\Omega = \Lambda \times \Gamma$ is a product $k$-graph, both ``actions'' are trivial -- just as a product of groups can be viewed as the Zappa--Sz\'ep product of two groups acting trivially on each other.

Inspired by this observation, we set out to analyze the analogue, in this picture, of a semidirect product.  That is, what can we say if one of the ``actions'' (say, the action of $G(\Lambda)$ on $G(\Gamma)$) is trivial? 
 
 In this case, Proposition \ref{prop:stable-factorization} and Theorem \ref{thm: rho isom} establish that the decomposition \eqref{eq:graphs} holds at the $k$-graph level: $$\Omega_1 \cong \bigsqcup_{v\in \Gamma^0} \Lambda \times \{v\}, \ \Omega_2 \cong \bigsqcup_{x \in \Lambda^0} \{ x\} \times \Gamma.$$  Moreover, if $G(\Lambda)$ acts trivially on $G(\Gamma)$ and $C^*(\Gamma) \cong C^*(\Z^\ell)$ can also be viewed as a group $C^*$-algebra, we show in Theorem \ref{thm:circle-xprod} that the quasi-product $C^*$-algebra $C^*(\Omega)$ is a crossed product, $C^*(\Omega) \cong C^*(\Lambda) \rtimes \Z^\ell$.

Perhaps surprisingly,  Example \ref{ex: path loops} reveals that  a quasi-product $\Omega$ need not have both ``actions'' trivial in order to admit an isomorphism $\Omega \cong \Lambda \times \Gamma$ to a product graph.  However, Theorems \ref{thm:relaxed stable} and \ref{thm:product-isom} describe the precise conditions on the two ``actions''   which will guarantee a product decomposition $\Omega \cong \Lambda \times \Gamma$, and hence a tensor product decomposition  $C^*(\Omega) \cong C^*(\Lambda) \otimes C^*(\Gamma)$. That is, our results in Section \ref{sec:qp=is-product} give a graph-theoretic characterization of when a $k$-graph decomposes, as a category, into a product.

\section{Preliminaries}
\textbf{Notation:} We will suppose that $0\in\N$; by abuse of notation, we also write $0 \in \N^k$ for any $k$. We let $E:=\{e_1,\cdots, e_k\}\subseteq \N^k$ be the standard basis for $\N^k$. 

\subsection{Higher-rank graphs}
\begin{dfn}
\label{def:kgraph}
    \cite[Def 1.1]{kp}  Let $\Lambda$ be a countable category and $d:\Lambda \to \N^k$ a functor. If $(\Lambda, d)$ satisfies the \textit{factorization property}---that is, for every morphism $\lambda \in \Lambda$ and $n,m\in \N^k$ such that $d(\lambda)=m+n$, there exist unique $\mu,\nu\in \Lambda$ such that $d(\mu)=m$, $d(\nu)=n$, and $\lambda = \mu\nu$---then $(\Lambda,d)$ is a \textit{$k$-graph} (or higher-rank graph of rank $k$).  We will frequently abuse notation and refer to $\Lambda$ as a $k$-graph, without mentioning the functor $d$.
\end{dfn}

For a $k$-graph $\Lambda$ and $A\subseteq \N^k$ we define
\[\Lambda^A:=\{\lambda\in \Lambda : d(\lambda)\in A\}.\]
We suppress the set-builder notation for singleton sets. For example, $\Lambda^{e_1}:=\Lambda^{\{e_1\}}$.
By the factorization rule, for each $\lambda \in \Lambda$ there exist unique $v, w \in \Lambda^0$ so that $\lambda = v \lambda w$; we write $v=: r(\lambda)$ and $w=: s(\lambda)$.
Combining this with the notation above, we obtain the sets 
\[ v \Lambda^n = \{ \lambda \in \Lambda: r(\lambda) = v, d(\lambda) = n\}, \quad v\Lambda w = \{ \lambda \in \Lambda: r(\lambda) = v, s(\lambda) = w\} \]
and so on.

We say that $\Lambda$ is {\em row-finite} if $v\Lambda^n$ is finite for all $n \in \N^k$. 

\begin{dfn}
\label{1sk}
    For a higher-rank graph $\Lambda$, we define its \textit{one-skeleton} as an edge-colored directed graph 
    \[G(\Lambda):= (\Lambda^0,\Lambda^E,s,r,d),
    \]
where $s, r: \Lambda^E \to \Lambda^0$ are defined as above, 
  and we view $d(e) \in E$ as the color of the edge $e.$
We will say that $\Lambda$ is {\em connected} if the underlying undirected graph of $G(\Lambda)$ is connected; that is, for any $v, w \in \Lambda^0,$ there is a (not necessarily composable) string of edges $g_1, \ldots, g_n \in \Lambda^E$ linking $v,w$. Precisely, we require that for all $i$, $\{ s(g_i), r(g_i)\} \cap \{ s(g_{i+1}), r(g_{i+1})\} \not= \emptyset$, $w \in \{ s(g_n), r(g_n)\}$ and $v \in \{ s(g_1), r(g_1)\}.$
    \end{dfn}

Conversely, given a directed graph $G= (G^0, G^1, r, s)$,  its path category $G^*$ is
\[ G^* =  \bigcup_{n\in \N} \{ g_1 \cdots g_n \in (G^1)^n: s(g_i) = r(g_{i+1}) \text{ for all } 1 \leq i \leq n\}.  \]
If $G = G(\Lambda)$, then  the factorization rule implies that each $\lambda \in \Lambda$ has multiple representations in $G^*.$  For example, if $d(\lambda) = e_1 + e_2,$ the fact that $e_1 + e_2 = e_2 + e_1$ means that $\lambda$ can be written in two ways as a concatenation of edges: there are unique $e, e' \in \Lambda^{e_1}$ and $f, f'\in \Lambda^{e_2}$ such that
\[ \lambda =  e f = f'e'.\]
Consequently, $\Lambda \not= G(\Lambda)^*,$ but we can view $\Lambda$ as a quotient of the path category $G(\Lambda)^*$.  

However, not every equivalence relation on the path category $G^*$ of an edge-colored directed graph $G$ will yield a $k$-graph $G^*/\sim$.  Our next goal is to describe those  equivalence relations that will. (For consistency with the $k$-graph setting, we view an edge-coloring on a directed graph as a map $d: G^1 \to E = \{ e_1, \ldots, e_k\}$; the generators of $\N^k$ represent the allowed colors.)

\begin{dfn}
\label{def:color-order}
    The {\em color order} of a path $p = p_1 \cdots p_n$ in an edge-colored directed graph $G$ is the $n$-tuple $(d(p_1), d(p_2), \ldots, d(p_n)) \in E^n.$
\end{dfn}
By the factorization property, if $G^*/\sim$ is a $k$-graph and $\lambda \in G^*$,  each permutation of the color order of $\lambda$ is represented by a unique path $\mu$ with $\mu \sim \lambda$.
It turns out  that this condition indeed characterizes when $G^*/\sim$ is a $k$-graph: 

\begin{thm}\cite[Theorem 2.1]{efgggp}
\label{thm:kgraph}
  Let $G$ be an edge-colored directed graph. If   $\sim$ is any $(s,r, d)$-preserving equivalence relation $\sim$ on the path category $G^*$ of an edge-colored directed graph $G$, then $G^*/\sim$ is a $k$-graph iff $\sim$ satisfies   the conditions (KG0)  and (KG4)  below.
\end{thm}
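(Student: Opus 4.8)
The plan is to prove the two directions of the biconditional separately. In both directions the natural degree map to use is the additive extension of the colouring $d\colon G^1\to E$ to the path category, $d(p_1\cdots p_n)=\sum_{i=1}^{n}d(p_i)\in\N^k$; this is already a functor on $G^*$, and since $\sim$ is $(s,r,d)$-preserving it descends to a functor $\bar d\colon G^*/\sim\to\N^k$ with $\bar d^{-1}(0)=(G^*/\sim)^0$. For the direction ``$k$-graph $\Rightarrow$ (KG0) and (KG4)'' I would simply feed short paths into the factorization property of $(G^*/\sim,\bar d)$: factoring a length-two class of bi-colour $(e_i,e_j)$ with $i\ne j$ as a degree-$e_j$ class times a degree-$e_i$ class recovers the ``complete, uniquely paired squares'' content of (KG0) (and uniqueness in the factorization property forces $\sim$ to be generated by those squares and to be trivial on $G^0\cup G^1$), while factoring a length-three class having three distinct colours in two different orders recovers the cubical coherence of (KG4).

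For the converse I would assume (KG0) and (KG4). Granting that $G^*/\sim$ is a category and $\bar d$ a functor, the only thing to verify is the factorization property, and I would reduce that to the combinatorial claim that, for each class $[\lambda]$ and each splitting $\bar d([\lambda])=m+n$, the class $[\lambda]$ contains exactly one path of the form $\mu\nu$ with $d(\mu)=m$ and with $\mu,\nu$ internally sorted relative to a fixed total order $\preceq$ on $E$. Existence is the easy half: adjacent transpositions generate the symmetric group, (KG0) lets one replace any length-two subpath $ef$ with $d(e)\ne d(f)$ by the unique $\sim$-equivalent path $f'e'$ from the corresponding square, and iterating these ``swap'' moves sorts the colour order of any representative into ``($m$-colours, $\preceq$-sorted) followed by ($n$-colours, $\preceq$-sorted)''; from such a representative one reads off $[\mu]$ and $[\nu]$.

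Uniqueness is the crux, and is where I expect the real work to be. It is a confluence statement for the rewriting system on representatives given by the swap moves, and I would prove it via Newman's diamond lemma. Fixing the target colour order above, I would orient each move so that it strictly decreases the number of inversions of the colour word (tokenised so that equal-coloured edges are distinguished by their order of occurrence) relative to the target; this makes the system terminating, so it suffices to check local confluence. Two decreasing moves at disjoint pairs of edges plainly commute; the only delicate peak is two moves sharing an edge, i.e.\ acting on a length-three subpath, and a short check — using that same-coloured tokens never swap past one another, hence keep their relative target-order — shows that when both moves are inversion-decreasing the three colours involved must be pairwise distinct. In that case the two orders in which one sorts the length-three piece terminate at the same colour word, and they terminate at the \emph{same path} precisely by (KG4); since (KG0) also makes each move a well-defined function (the square partner being unique), local confluence holds. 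Newman's lemma then gives a unique normal form for each target, the combinatorial claim follows, and splitting the normal form after its $|m|$-th edge furnishes the unique factorization, with uniqueness of the pair of classes inherited from uniqueness of the normal form.

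Thus the main obstacle is entirely in the converse: setting up a terminating inversion measure in the presence of repeated colours, and carrying out the overlap analysis carefully enough that (KG4) is invoked only for genuinely three-coloured configurations — thereby confirming that nothing beyond (KG0) and (KG4) is needed. The forward direction, by contrast, is a direct unwinding of the factorization property on paths of length at most three.
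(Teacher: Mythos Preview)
The paper does not prove this statement; it is quoted from \cite{efgggp}, so there is no in-paper proof to compare against. That said, your proposal contains a genuine misreading of the two conditions that changes the difficulty of the theorem substantially.

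You treat (KG0) as the ``complete, uniquely paired squares'' axiom and (KG4) as a length-three cubical coherence condition. In this paper, however, (KG0) is the statement that $\sim$ is a congruence on $G^*$ (so that composition in $G^*/\sim$ is well defined), and (KG4) asserts, for \emph{every} path $\lambda$ and \emph{every} permutation of its colour order, the existence of a unique representative in $[\lambda]$ with that colour order. With these readings the theorem is much shorter than your outline suggests. Forward direction: (KG0) is exactly what is needed for $G^*/\sim$ to be a category, and (KG4) follows by iterating the factorization property along any chosen colour order and using uniqueness at each step. Converse: (KG0) makes $G^*/\sim$ a category and $\bar d$ a functor; for factorization, given $\bar d([\lambda])=m+n$, fix any colour order placing the $m$-colours first and invoke (KG4) once to get a unique representative $\mu\nu\in[\lambda]$ with $d(\mu)=m$; uniqueness of $([\mu],[\nu])$ then follows from (KG0) together with the uniqueness clause of (KG4) applied inside $[\mu']$, $[\nu']$ and then in $[\lambda]$.

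Your diamond-lemma/rewriting argument is not wrong in spirit, but it is aimed at a different, harder theorem: deducing the global (KG4) from the local data of commuting squares plus a cube condition. That is essentially \cite[Theorem~4.4]{hrsw}, invoked elsewhere in the paper. Here (KG4) is a hypothesis, not something you must bootstrap from length-two and length-three information, so the confluence machinery is unnecessary. If you want to keep your write-up, reframe it as a proof that ``squares + cubes $\Rightarrow$ (KG4)'', and then observe that the present theorem is an easy corollary.
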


\begin{itemize}
  \item[(KG0)] If $\lambda \in G^*$ is a path such that $\lambda = \lambda_2 \lambda_1$, then $[\lambda] = [p_2 p_1]$ whenever $p_1 \in [\lambda_1]$ and $p_2 \in [\lambda_2]$.

\item[(KG4)] For every $\lambda \in G^*$ and every permutation of the color order of $\lambda$, there is a unique path in $[\lambda]$ with the permuted color order.
\end{itemize}

An equivalence relation $\sim$ on $G^*$ which makes $G^*/\sim$ into a $k$-graph will be called a {\em factorization rule}.

\begin{example}
\label{ex:products}
    If $(\Lambda, d_1)$ is a $k_1$-graph and $(\Gamma, d_2)$ is a $k_2$-graph, then \cite[Proposition 1.8]{kp} establishes that the product category $(\Lambda \times \Gamma, d_1 \times d_2)$ is a ($k_1 + k_2)$-graph. Thus, $(\Lambda \times \Gamma)^0 = \Lambda^0 \times \Gamma^0$, and the source and range maps evaluate component wise. 
 Consequently, for any $\lambda  \in \Lambda, \gamma \in \Gamma$, both of the pairs $((\lambda, r(\gamma)), (s(\lambda), \gamma))$ and $((r(\lambda), \gamma),(\lambda, s(\gamma)))$ are composable in $\Lambda \times \Gamma$, and both pairs have the product $(\lambda, \gamma)$.
  
In fact, $G(\Lambda \times \Gamma)$ is the Cartesian product of $G(\Lambda)$ and $G(\Gamma)$: 
\[G(\Lambda \times \Gamma)^0 = \Lambda^0 \times \Gamma^0 = G(\Lambda)^0 \times G(\Gamma)^0 \quad \text{ and } \quad   G(\Lambda \times \Gamma)^1 = G(\Lambda)^1 \times \Gamma^0 \,  \sqcup \,  \Lambda^0 \times G(\Gamma)^1.\]
    
   To link this example with Theorem \ref{thm:kgraph}, we now describe $G(\Lambda\times\Gamma)^*$ and the factorization rule $\sim_\times$ yielding the $(k_1 + k_2)$-graph $\Lambda \times \Gamma$. For any $v \in \Gamma^0$ and any paths $p, q, p', q' \in G(\Lambda)^*$, we have  $(p, v)(q, v) \sim_\times  (q', v)(p', v)$ if $pq \sim_\Lambda q'p'$ in $\Lambda$; for any $x \in \Lambda^0$ and any paths $h, h', \ell, \ell' \in G(\Gamma)$, 
 $(x, h)(x, \ell) \sim_\times (x, \ell')(x, h')$ if $h \ell  \sim_\Gamma \ell' h'$  in $\Gamma$; and 
   \[ (p, r(h))(s(p), h) \sim_\times  ( r(p), h)(p, s(h) ).\]
   There are frequently other factorization rules on $G(\Lambda \times \Gamma)^*$, which will yield different $(k_1+k_2)$-graphs.
\end{example}

\begin{remark}
\label{rmk:commuting-squares}
    If $G$ is an edge-colored directed graph with 2 colors of edges, it follows from Theorem \ref{thm:kgraph} that  a factorization rule on $G$ is completely determined by its {\em commuting squares}, or the equivalence classes of length-2 two-colored paths in $G$.  However, if $G$ is a $k$-colored directed graph and $k > 2$, specifying the commuting squares may not determine a valid factorization rule, as we show in Example \ref{ex: bouq}.
\end{remark}

\begin{example}
\label{ex: bouq}
Consider an edge-colored bouquet graph with three black edges, $f_1$, $f_2$, and $f_3$, a single blue edge, $e$, and a single red edge, $g$.
    \begin{center}
     \scalebox{1.25}{
        \begin{tikzcd}
	\bullet
	\arrow["{f_2}", from=1-1, to=1-1, loop, in=100, out=170, distance=10mm]
	\arrow["e", color={rgb,255:red,92;green,92;blue,214}, dashed, from=1-1, to=1-1, loop, in=325, out=35, distance=10mm]
	\arrow["{f_1}", from=1-1, to=1-1, loop, in=55, out=125, distance=10mm]
    \arrow["{f_3}", from=1-1, to=1-1, loop, in=145, out=215, distance=10mm]
    \arrow["{g}", from=1-1, to=1-1, loop, in=280, out=-10,dotted, color={rgb,255:red,255;green,0;blue,0}, distance=10mm]
\end{tikzcd}}
\end{center}

There are 36 different ways to specify commuting squares for the graph above; however, in all of them, we will have $eg \sim ge$ since there is only one red edge and only one blue edge. We pause to further examine three of these  which we name $\sim_1,\sim_2,$ and $\sim_3$. They are defined  as follows:

\begin{align*}
    f_1e &\sim_1 ef_1 & f_2 e &\sim_1 ef_2 & f_3 e &\sim_1 ef_3 & f_1g &\sim_1 gf_1 & f_2 g &\sim_1 gf_2 & f_3 g &\sim_1 gf_3\\
    f_1e &\sim_2 ef_2 & f_2 e &\sim_2 ef_1 & f_3 e &\sim_2 ef_3 & f_1g &\sim_2 gf_2 & f_2 g &\sim_2 gf_1 & f_3 g &\sim_2 gf_3\\
    f_1e &\sim_3 ef_1 & f_2 e &\sim_3 ef_3 & f_3 e &\sim_3 ef_2 & f_1g &\sim_3 gf_2 & f_2 g &\sim_3 gf_1 & f_3 g &\sim_3 gf_3
\end{align*}

First, we note that $\sim_3$ does not produce a $3$-graph. In particular, the path $f_1eg$ is equivalent to two paths of color order ``black, blue, red:" 
\[(f_1e)g\sim_3 (ef_1)g  = e (f_1 g)\sim_3 e(gf_2) = (eg) f_2\sim_3 (ge)f_2,\]
but also $ f_1(eg)\sim_3 (f_1 g)e \sim_3 g(f_2e)\sim_3gef_3.$  

The relations $\sim_1$ and $\sim_2$ do yield $3$-graphs which we distinguish as $\Omega_1$ and $\Omega_2$. In fact, $\Omega_1$ is a product $3$-graph: $\Omega_1 = B_3 \times C_{1,2}$ is the product of a bouquet $B_3$ with three loops, and the length-1 cycle in two colors, $C_{1,2}$ (cf.~Definition \ref{cycle-defn} below).
Therefore, \cite[Corollary 3.5(iv)]{kp} implies that $C^*(\Omega_1)\cong \mathcal O_3 \otimes C(\T^2)$. 

 On the other hand, to check that $\sim_2$ does indeed define a 3-graph, \cite[Theorem 4.4]{hrsw} implies that it is enough to show that (KG4) holds for all length-3 three-color paths.  This is a straightforward computation which we leave to the reader. At the $C^*$-algebra level, $C^*(\Omega_2)\cong \mathcal O_3 \rtimes_{\tau} \Z^2$ where $\tau_{e_1}$ and $\tau_{e_2}$ both interchange the generators $s_1$ and $s_2$ in $\mathcal{O}_3$. 
\end{example}

\subsection{Zappa--Sz\'ep products}
In group theory, there is a generalization of the semi-direct product known as the Zappa--Sz\'ep product, where the two factor groups each act on the  other in a compatible way. In recent years, this structural theory has been extended to groupoids and small categories \cite{groupoids-blends,duwenig-li, matched-pair}, and in particular to $k$-graphs \cite{matched-pair}, where these two-sided actions are called {\em matched pairs}.

Recall that, if $\mathcal C, \mathcal D$ are categories with the same object space $(\mathcal C^0 = \mathcal D^0$), then (using the arrows-only picture of category theory) we denote 
\[ \mathcal C * \mathcal D = \{ (c, d) \in \mathcal C \times \mathcal D : s(c) = r(d) \in \mathcal C^0 = \mathcal D^0\}.\]
Following \cite{matched-pair}, a {\em left action} of a category $\mathcal C$ on another category $\mathcal D$ with $\mathcal C^0 = \mathcal D^0$ is a map $\mathcal C * \mathcal D \to \mathcal D,$ denoted $c \lact d,$ such that $(c_1 c_2) \lact d = c_1 \lact (c_2 \lact d)$ whenever $s(c_1) = r(c_2).$
Similarly, a {\em right action} of $\mathcal D$ on $\mathcal C$ 
is a map $\mathcal C * \mathcal D \to \mathcal C,$ denoted $c \ract d,$ such that $c \ract (d_1 d_2) = (c \ract d_1) \ract d_2$ whenever $s(d_1) = r(d_2).$
\begin{dfn}
    \cite[Def. 3.1]{matched-pair} A \textit{matched pair} is a quadruple $(\calC,\calD,\lact,\ract)$ consisting of small categories $\calC$ and $\calD$ with $\calC^0=\calD^0$, a left action 
    $\lact: \calC * \calD \to \calD$, and a right action 
    $\ract: \calC * \calD \to \calC$ such that for all composable quadruples $(c_1,c_2,d_1,d_2)\in \calC^2 * \calD^2$,
    \begin{enumerate}[label=(MP\arabic*)]
        \item $s(c_2\lact d_1) = r(c_2\ract d_1)$, 
        \item $c_2\lact (d_1d_2) = (c_2\lact d_1)((c_2\ract d_1)\lact d_2)$, and 
        \item $(c_1c_2)\ract d_1 = (c_1 \ract (c_2\lact d_1))(c_2\ract d_1)$.
    \end{enumerate}
    \label{def:matched-pair}
\end{dfn}

\begin{lem}
    \label{k-graph-mp-lem} \cite[Lemma 3.27]{matched-pair} Every $(k_1+k_2)$-graph, $\Omega$, decomposes uniquely as a  matched pair $(\Omega_1,\Omega_2,\lact,\ract)$ where $\Omega_1:=d^{-1}(\N^{k_1}\times \{0\})$ is a $k_1$-graph and $\Omega_2:=d^{-1}(\{0\}\times \N^{k_2})$ is a $k_2$-graph such that    $\omega_1\omega_2=(\omega_1\lact\omega_2)(\omega_1\ract\omega_2)$ for all  $(\omega_1,\omega_2)\in \Omega_1 * \Omega_2$.  Furthermore, $d(\omega_1 \lact \omega_2) = d(\omega_2)$ and $d(\omega_1 \ract \omega_2) = d(\omega_1)$. 
\end{lem}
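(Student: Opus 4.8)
The plan is to read off both the factor categories and the two actions directly from the factorization property of $\Omega$ (Definition \ref{def:kgraph}), using it in two guises: to split a morphism of $\Omega$ as an element of $\Omega_1$ followed by one of $\Omega_2$, and to re-split such a product in the opposite order. First I would check that $\Omega_1=d^{-1}(\N^{k_1}\times\{0\})$ and $\Omega_2=d^{-1}(\{0\}\times\N^{k_2})$ are subcategories with $\Omega_1^0=\Omega_2^0=\Omega^0$: closure under composition is additivity of $d$, and $d^{-1}(0)=\Omega^0$ gives the common object set. That $\Omega_1$ is a $k_1$-graph (with degree functor the first $k_1$ coordinates of $d|_{\Omega_1}$) follows by applying the factorization property of $\Omega$ to a degree decomposition $p+q$ supported in the first $k_1$ coordinates: the unique factors then automatically lie in $\Omega_1$, and their uniqueness in $\Omega$ gives uniqueness in $\Omega_1$. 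The same argument handles $\Omega_2$, and countability is inherited.

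Next I would define the actions. Given $(\omega_1,\omega_2)\in\Omega_1*\Omega_2$, write $d(\omega_1)=(m,0)$ and $d(\omega_2)=(0,n)$; then $d(\omega_1\omega_2)=(m,n)=(0,n)+(m,0)$, so the factorization property produces unique $\mu,\nu\in\Omega$ with $d(\mu)=(0,n)$, $d(\nu)=(m,0)$, and $\omega_1\omega_2=\mu\nu$. I set $\omega_1\lact\omega_2:=\mu\in\Omega_2$ and $\omega_1\ract\omega_2:=\nu\in\Omega_1$. The degree assertions hold by construction, and (MP1) is immediate because $\mu\nu$ is a composable product; reading ranges and sources off the resulting identity $\omega_1\omega_2=(\omega_1\lact\omega_2)(\omega_1\ract\omega_2)$ also records $r(\omega_1\lact\omega_2)=r(\omega_1)$ and $s(\omega_1\ract\omega_2)=s(\omega_2)$, which I will reuse for composability checks.

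The substance is then to verify that $\lact$ is a left action and $\ract$ a right action and that (MP2), (MP3) hold; one computation and its mirror will do all of it. For $\alpha_1,\alpha_2\in\Omega_1$ with $s(\alpha_1)=r(\alpha_2)$ and $\beta\in\Omega_2$ with $s(\alpha_2)=r(\beta)$, I would expand
\[(\alpha_1\alpha_2)\beta=\alpha_1(\alpha_2\beta)=\alpha_1\bigl((\alpha_2\lact\beta)(\alpha_2\ract\beta)\bigr)=\bigl(\alpha_1(\alpha_2\lact\beta)\bigr)(\alpha_2\ract\beta),\]
noting that $\alpha_1(\alpha_2\lact\beta)$ is defined because $r(\alpha_2\lact\beta)=r(\alpha_2)=s(\alpha_1)$, and then apply the definition once more to $\alpha_1(\alpha_2\lact\beta)$. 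Regrouping gives $(\alpha_1\alpha_2)\beta=\mu\nu$ with
\[\mu:=\alpha_1\lact(\alpha_2\lact\beta)\in\Omega_2,\qquad\nu:=\bigl(\alpha_1\ract(\alpha_2\lact\beta)\bigr)(\alpha_2\ract\beta)\in\Omega_1,\]
the product defining $\nu$ being composable by the recorded source identity for the pair $(\alpha_1,\alpha_2\lact\beta)$ together with (MP1) for $(\alpha_2,\beta)$. Since $d(\mu)=d(\beta)$ and $d(\nu)=d(\alpha_1)+d(\alpha_2)=d(\alpha_1\alpha_2)$, uniqueness of the $\Omega_2$-then-$\Omega_1$ factorization of $(\alpha_1\alpha_2)\beta$ forces $(\alpha_1\alpha_2)\lact\beta=\mu$, which is the left-action axiom, and $(\alpha_1\alpha_2)\ract\beta=\nu$, which is (MP3). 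The mirror computation, expanding $\alpha(\beta_1\beta_2)=(\alpha\beta_1)\beta_2$ for $\alpha\in\Omega_1$ and $\beta_1,\beta_2\in\Omega_2$ with $s(\alpha)=r(\beta_1)$, $s(\beta_1)=r(\beta_2)$, yields in the same way the right-action axiom $\alpha\ract(\beta_1\beta_2)=(\alpha\ract\beta_1)\ract\beta_2$ and (MP2).

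Finally, for uniqueness, suppose $(\Omega_1,\Omega_2,\lact',\ract')$ is any matched pair satisfying $\omega_1\omega_2=(\omega_1\lact'\omega_2)(\omega_1\ract'\omega_2)$ on $\Omega_1*\Omega_2$ with $d(\omega_1\lact'\omega_2)=d(\omega_2)$ and $d(\omega_1\ract'\omega_2)=d(\omega_1)$. Then $\omega_1\lact'\omega_2\in\Omega_2$ and $\omega_1\ract'\omega_2\in\Omega_1$ carry exactly the degrees of $\mu,\nu$ from the definition, so the uniqueness clause of the factorization property forces $\lact'=\lact$ and $\ract'=\ract$. The only real obstacle is the bookkeeping in the third paragraph---keeping track of which composites are defined and matching degrees so that the uniqueness clause can be invoked---but each individual move is a direct application of Definition \ref{def:kgraph}, so I do not expect any genuine difficulty.
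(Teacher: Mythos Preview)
The paper does not prove this lemma; it is quoted from \cite[Lemma 3.27]{matched-pair} and used as a black box. Your argument is correct and is precisely the standard proof: define $\lact,\ract$ by refactoring $\omega_1\omega_2$ in the opposite order via the factorization property, then read off the action axioms and (MP2), (MP3) by expanding $(\alpha_1\alpha_2)\beta$ and $\alpha(\beta_1\beta_2)$ in two ways and invoking uniqueness. There is nothing to compare beyond noting that this is the approach of the cited source.
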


The factorization rule of $\Omega$ can thus be described in terms of $\lact, \ract$ as follows.
If $\omega_1, \omega_2$ are composable edges in $\Omega_1, \Omega_2$ respectively, the above Lemma implies that $\omega_1\lact \omega_2$ and $\omega_1\ract\omega_2$ are also edges, and moreover that 
$$\omega_1\omega_2\sim (\omega_1\lact \omega_2)(\omega_1\ract\omega_2).$$
Conversely (see \cite[Lemma 3.28]{matched-pair}), the factorization rule on $\Omega$ is completely determined by the factorization rules on $\Omega_1, \Omega_2$ and the Zappa--Sz\'ep actions on edges. We will take advantage of this perspective in the proof of Theorem \ref{thm:relaxed stable} below. 

For a concrete example, consider $\sim_2$ from Example \ref{ex: bouq}. Setting $\Omega_1$ to be the graph given by the black edges and $\Omega_2
$ the 2-graph generated by $e,g$,  
the associated Zappa--Sz\'ep actions $\lact_2$ and $\ract_2$ satisfy 
\[ f_i\lact_2 e = e, \;  f_i \lact_2 g = g, \;  f_1\ract_2 e=f_2 = f_1 \ract_2 g , \ \text{ and } \ f_3\ract_2 e= f_3 = f_3 \ract_2 g.\]

\section{Quasi-products}
\label{sec:stable}
A quasi-product (see Definition \ref{def:quasi-factor} below) is a $(k_1+k_2)$-graph which at the one-skeleton level resembles a product graph. In this section, we 
study the properties of the Zappa--Sz\'ep actions $\lact, \ract$ on quasi-products at the graph level. While these ``actions'' need not be particularly well-behaved  (cf.~Examples \ref{ex:rho-non-comp} and \ref{ex:rho-non-isom} below), if one of them is trivial, we obtain an elegant structural decomposition of the quasi-product $\Omega$ (Proposition \ref{prop:stable-factorization} and Theorem \ref{thm: rho isom}).

\begin{dfn}
\label{def:quasi-factor}
    Let $\Omega$ be a $(k_1+k_2)$-graph. If there exist $k_1$- and $k_2$-graphs, $\Lambda$ and $\Gamma$, such that 
    \begin{enumerate}[(i)]
        \item $G(\Omega)\cong G(\Lambda\times\Gamma)$ and
        \item there exist injective functors $\psi_1: \Lambda \into \Omega$ and $\psi_2:\Gamma \into \Omega$
        \item with $\pi_i: \N^{k_1+k_2}\to \N^{k_i}$ the canonical projection, $d_\Lambda=\pi_1d_\Omega\psi_1$ and $d_\Gamma=\pi_2 d_\Omega \psi_2$,
    \end{enumerate}
    then we call $\Omega$ a {\em quasi-product} with {\em quasi-factors} $\Lambda$ and $\Gamma$.  
\end{dfn}

\begin{remark}
\label{rmk:mp-not-quasi-prod}
   Let $\Omega$ be a $(k_1 + k_2)$-graph and consider its  matched-pair decomposition  $(\Omega_1, \Omega_2, \lact, \ract) $ from Lemma \ref{k-graph-mp-lem}.  Then $\Omega$ is not a quasi-product with quasi-factors $\Omega_1$ and $\Omega_2$, because 
   $\Omega_1^0 = \Omega_2^0 = \Omega^0.$
   Therefore, 
   \[ (\Omega_1 \times \Omega_2)^0 = \Omega^0 \times \Omega^0 \not= \Omega^0,\]
   so we do not have $G(\Omega) \cong G(\Omega_1 \times \Omega_2)$.
\end{remark}

When $\Omega$ is a quasi-product with quasi-factors $\Lambda$ and $\Gamma$, we will often
use the notation for edges and paths in $G(\Lambda\times\Gamma)$ (see Example \ref{ex:products}) to describe edges in $G(\Omega)$. This emphasizes the fact that the only difference between $\Lambda\times\Gamma$ and $\Omega$ is their factorization rules, $\sim_\times$ and $\sim_\Omega$, respectively.

\begin{prop}
    \label{dis-union-prop}
   If $\Omega$ is a quasi-product $(k_1+k_2)$-graph with associated matched pair $(\Omega_1, \Omega_2, \lact, \ract)$ as in Lemma \ref{k-graph-mp-lem}, then $\Omega_1$ and $\Omega_2$ both decompose as disjoint unions:
\[ \Omega_1 = \bigsqcup_{w\in \Gamma^0} \Omega_1^w\times\{w\}  \quad \text{ and } \quad \Omega_2 =  \bigsqcup_{x\in \Lambda^0} \{x\}\times\Omega_2^x.\]
\end{prop}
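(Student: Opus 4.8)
The plan is to use part~(i) of Definition~\ref{def:quasi-factor} to transport the Cartesian-product structure of the one-skeleton from $G(\Lambda\times\Gamma)$ to $G(\Omega)$, and then to note that the factorization rule cutting $\Omega_1$ out of the path category of $G(\Omega_1)$ cannot relate paths living in different ``$\Gamma^0$-slices'', since a factorization rule preserves ranges. The statement for $\Omega_2$ will then follow by symmetry.

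First I would fix an identification $G(\Omega)\cong G(\Lambda\times\Gamma)$, so that $\Omega^0=\Lambda^0\times\Gamma^0$ and, by Example~\ref{ex:products}, the edge set of $G(\Omega)$ splits as $G(\Lambda)^1\times\Gamma^0 \sqcup \Lambda^0\times G(\Gamma)^1$, the first family carrying the colors $e_1,\dots,e_{k_1}$ and the second the colors $e_{k_1+1},\dots,e_{k_1+k_2}$. By Lemma~\ref{k-graph-mp-lem}, $\Omega_1=d^{-1}(\N^{k_1}\times\{0\})$ has object set $\Omega^0$ and its edge set $\Omega_1^E$ consists exactly of the edges of $G(\Omega)$ of color in $\{e_1,\dots,e_{k_1}\}$, i.e.\ (under the identification) $G(\Lambda)^1\times\Gamma^0$. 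Since an edge $(g,w)$ of this form has $r(g,w)=(r(g),w)$ and $s(g,w)=(s(g),w)$, every edge of $G(\Omega_1)$ has both endpoints in a single slice $\Lambda^0\times\{w\}$; hence $G(\Omega_1)=\bigsqcup_{w\in\Gamma^0}G(\Lambda)\times\{w\}$ as an edge-colored directed graph, and consequently $G(\Omega_1)^*=\bigsqcup_{w\in\Gamma^0}G(\Lambda)^*\times\{w\}$.

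Next I would descend to the quotient. Being a $k_1$-graph, $\Omega_1$ is a quotient $G(\Omega_1)^*/{\sim}$ for some factorization rule $\sim$ (Theorem~\ref{thm:kgraph} and the surrounding discussion). Since $\sim$ is $(s,r,d)$-preserving and the range of a path $(p,w)$ is $(r(p),w)$, two $\sim$-related paths lie in the same slice; thus $\sim$ restricts to a factorization rule on each $G(\Lambda)^*\times\{w\}$, which---stripping the redundant second coordinate---we may regard as a factorization rule $\sim_w$ on $G(\Lambda)^*$. Writing $\Omega_1^w:=G(\Lambda)^*/{\sim_w}$, a $k_1$-graph on the one-skeleton $G(\Lambda)$, the quotient map yields
\[
\Omega_1 \;=\; G(\Omega_1)^*/{\sim} \;=\; \bigsqcup_{w\in\Gamma^0}\bigl(G(\Lambda)^*\times\{w\}\bigr)/{\sim} \;=\; \bigsqcup_{w\in\Gamma^0}\Omega_1^w\times\{w\},
\]
and this is a decomposition of categories, not merely of sets, since any morphism in the $w$-slice has range in $\Lambda^0\times\{w\}$ so no morphism bridges distinct slices. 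The statement for $\Omega_2$ follows by the mirror argument, exchanging $\Lambda$ and $\Gamma$ and using the colors $e_{k_1+1},\dots,e_{k_1+k_2}$ to identify $\Omega_2^E$ with $\Lambda^0\times G(\Gamma)^1$. The argument is essentially bookkeeping; the single point that needs care is the compatibility of the factorization rule of $\Omega_1$ with the slicing of its one-skeleton, and this is exactly what range-preservation of $\sim$ supplies, so I do not anticipate a genuine obstacle.
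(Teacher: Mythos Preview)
Your proposal is correct and follows essentially the same approach as the paper: both arguments identify $G(\Omega_1)$ with $\bigsqcup_{w\in\Gamma^0} G(\Lambda)\times\{w\}$ via the coordinate-wise source and range maps, and then observe that the factorization rule (being $(s,r,d)$-preserving) restricts to each slice to produce the $k_1$-graphs $\Omega_1^w$. The paper phrases the last step as ``the factorization rule of $\Omega$ restricts to connected components'', which is exactly your range-preservation argument.
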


\begin{proof}
   Note that $\Omega_1, \Omega_2$ have one-skeletons $G(\Omega_1)$ and $G(\Omega_2)$. To investigate the structure of these one-skeletons, we split the standard basis $E:=\{e_1,\cdots, e_{k_1+k_2}\}$ of $\N^{k_1+k_2}$ into $E_1:=\{e_1,\cdots, e_{k_1}\}$ and $E_2=E\setminus E_1$. 
Then 
    \[G(\Omega_1) = (\Omega^0, \Omega^{E_1}, s, r) \quad \text{ and } \quad G(\Omega_2) = (\Omega^0,\Omega^{E_2},s,r).\]

As $G(\Omega) \cong G(\Lambda\times \Gamma)$, every edge in $G(\Omega)$ is of the form $(e, v)$ or $(x,f)$ for some $e\in G(\Lambda), f\in G(\Gamma), x \in \Lambda^0, v \in \Gamma^0. $
     
    Since $s$ and $r$ evaluate coordinate-wise, if $v\neq w \in \Gamma^0$, then $G(\Lambda)\times \{v\}$ is disconnected from $G(\Lambda)\times\{w\}$ in $G(\Omega_1)$. A similar argument applies to $G(\Omega_2)$ giving the simplification

    \[G(\Omega_1)  = \bigsqcup_{w\in \Gamma^0} G(\Lambda)\times \{ w \} \quad \text{and} \quad G(\Omega_2) = \bigsqcup_{x\in \Lambda^0} \{x\}\times G(\Gamma).\]

The factorization rule of $\Omega$ restricts to connected components; that is, for any $w \in \Gamma^0, x \in \Lambda^0$ we have $k_i$-graphs
\[ (G(\Lambda)\times \{w\} )/\sim_\Omega =: \Omega_1^w \times\{w\}
, \qquad (\{x\}\times G(\Gamma) )/\sim_\Omega =:  \{x\}\times
\Omega_2^x \]    
which yields the desired decomposition of $\Omega_1, \Omega_2$.
\end{proof}

In any $k$-graph $\Omega$ with associated matched pair $(\Omega_1, \Omega_2, \lact, \ract)$, the fact that (Lemma \ref{k-graph-mp-lem}) $\omega_1 \omega_2 = (\omega_1 \lact \omega_2) (\omega_1 \ract \omega_2)$ whenever $(\omega_1, \omega_2) \in \Omega_1 * \Omega_2$ yields 
\begin{equation}
\label{eq:mp-source-range}
s(\omega_1 \ract \omega_2) = s(\omega_2) \qquad \text{ and } \qquad r(\omega_1 \lact \omega_2) = r(\omega_1).
\end{equation}
If $\Omega$ is a quasi-product, then for any $p \in G(\Lambda)^*, q \in G(\Gamma)^*$, if we set 
\[ \omega_1 = ([p], r(q)) \in \Omega_1^{r(q)} \times \{r(q)\}, \qquad \omega_2 = (s(p), [q]) \in \{s(p)\} \times  \Omega_2^{s(p)},\]
then $(\omega_1,\omega_2)\in \Omega_1*\Omega_2$ is a composable pair.  Since $d(\omega_1 \ract \omega_2) = d(\omega_1)$ and $d(\omega_1 \lact \omega_2) = d(\omega_2),$ we therefore have $\omega_1 \ract \omega_2 \in \Omega_1^{s(q)}\times\{s(q)\}$ and $\omega_1 \lact \omega_2 \in \{r(p)\}\times\Omega_2^{r(p)}$.

As $G(\Omega_1^w) = G(\Lambda)$ for all $w\in \Gamma^0$ and $G(\Omega_2^x) = G(\Gamma)$ for all $x \in \Lambda^0$, we can obtain, from the Zappa--Sz\'ep structure and the decomposition above, an 
action of paths in $G(\Lambda)$ on paths in $G(\Gamma)$ and vice versa.

\begin{dfn} 
Let $\Omega$ be a quasi-product with quasi-factors $\Lambda, \Gamma$.  For any $p \in G(\Lambda)^*, q\in G(\Gamma)^*$, we define color order preserving functions $p_\lact:G(\Gamma)^*\to G(\Gamma)^*$ and $q_\ract:G(\Lambda)^*\to G(\Lambda)^*$, via 
\label{def:path-action}
\begin{align}
    (r(p), [p_\lact (q)])   &: = ([p], r(q)) \lact (s(p), [q])\\ 
  ([ q_\ract(p) ], s(q))  &: = ([p], r(q)) \ract (s(p), [q]). 
\label{eq:path-action}
\end{align} 
\end{dfn}

\begin{prop}
   Let $\Omega$ be a quasi-product with quasi-factors $\Lambda$ and $\Gamma$. For any $p \in G(\Lambda)^*, q \in G(\Gamma)^*$, 
   \[ r(q_\ract (p)) = r(p), \ s(q_\ract (p)) = s(p) \quad \text{ and } \quad r(p_\lact (q)) = r(q), \ s(p_\lact (q)) = s(q). \]

 \label{prop:source-range-general}
\end{prop}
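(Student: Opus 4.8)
The plan is to reduce all four equalities to coordinate-wise bookkeeping on the single composable pair already singled out in the discussion preceding Definition \ref{def:path-action}. Fix $p\in G(\Lambda)^*$ and $q\in G(\Gamma)^*$, and set $\omega_1=([p],r(q))$ and $\omega_2=(s(p),[q])$, so that $(\omega_1,\omega_2)\in\Omega_1*\Omega_2$. As recorded in that discussion, the degree constraints of Lemma \ref{k-graph-mp-lem} together with the disjoint-union decompositions of Proposition \ref{dis-union-prop} force $\omega_1\ract\omega_2\in\Omega_1^{s(q)}\times\{s(q)\}$ and $\omega_1\lact\omega_2\in\{r(p)\}\times\Omega_2^{r(p)}$; unwinding Definition \ref{def:path-action}, these two edges are therefore exactly $(r(p),[p_\lact(q)])$ and $([q_\ract(p)],s(q))$, respectively.

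Next I would apply $r$ and $s$ to these expressions, using the fact (Example \ref{ex:products}) that source and range in $\Lambda\times\Gamma$, hence in $G(\Omega)\cong G(\Lambda\times\Gamma)$, are computed coordinate-wise, and that $r,s$ fix objects. This yields $r(\omega_1\lact\omega_2)=(r(p),r(p_\lact(q)))$, $s(\omega_1\lact\omega_2)=(r(p),s(p_\lact(q)))$, $r(\omega_1\ract\omega_2)=(r(q_\ract(p)),s(q))$, and $s(\omega_1\ract\omega_2)=(s(q_\ract(p)),s(q))$, while $r(\omega_1)=(r(p),r(q))$ and $s(\omega_2)=(s(p),s(q))$. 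I would then feed in three structural identities: from \eqref{eq:mp-source-range}, the equality $r(\omega_1\lact\omega_2)=r(\omega_1)$ gives $r(p_\lact(q))=r(q)$, and $s(\omega_1\ract\omega_2)=s(\omega_2)$ gives $s(q_\ract(p))=s(p)$; finally, (MP1) of Definition \ref{def:matched-pair} (equivalently, the requirement that the product $(\omega_1\lact\omega_2)(\omega_1\ract\omega_2)$ appearing in Lemma \ref{k-graph-mp-lem} be defined) gives $s(\omega_1\lact\omega_2)=r(\omega_1\ract\omega_2)$, and comparing coordinates yields simultaneously $r(q_\ract(p))=r(p)$ and, redundantly, $s(p_\lact(q))=s(q)$. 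That exhausts the four claimed identities.

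I do not expect a genuine obstacle. The only step requiring care is the first one: one must be sure that $\omega_1\ract\omega_2$ and $\omega_1\lact\omega_2$ really land in the $\{s(q)\}$-indexed and $\{r(p)\}$-indexed components of the decompositions in Proposition \ref{dis-union-prop}, so that expressing them as pairs with those particular fixed coordinates is legitimate and Definition \ref{def:path-action} applies verbatim. This is exactly the observation made just before Definition \ref{def:path-action}, so the proof can simply quote it. It is perhaps worth highlighting that three of the four identities (both ``source'' statements and $r(p_\lact(q))=r(q)$) follow immediately from \eqref{eq:mp-source-range}, while $r(q_\ract(p))=r(p)$ is the one identity that genuinely uses the composability of the factored pair.
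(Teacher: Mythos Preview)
Your proof is correct and follows essentially the same route as the paper: use \eqref{eq:mp-source-range} to read off $r(p_\lact(q))=r(q)$ and $s(q_\ract(p))=s(p)$ from the outer range and source, then use (MP1) (composability of the factored pair) to obtain the remaining two identities by matching coordinates. One small expository slip: in your final paragraph you call $s(p_\lact(q))=s(q)$ ``redundant'' and count it among the consequences of \eqref{eq:mp-source-range}, but in fact it is first obtained from (MP1), exactly as you derived it in the main argument; only two of the four identities come from \eqref{eq:mp-source-range}.
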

\begin{proof}
Equation  \eqref{eq:mp-source-range} implies that $r(r(p), [p_\lact (q)])= r([p], r(q)) = (r(p), r(q))$ and similarly that $s([q_\ract (p)], s(q)) = s(s(p), [q]) = (s(p), s(q)).$  Conseqently, 
\[ r(p_\lact (q)) = r(q) \in \Gamma^0 \qquad \text{ and } \qquad s(q_\ract (p)) = s(p) \in \Lambda^0.\]
For the remaining assertions, recall that, thanks to (MP1), $$((r(p),[p_\lact (q)]), ([q_\ract (p)], s(q))) \in \Omega_2 * \Omega_1$$ is a composable pair. In particular, $s(r(p), [p_\lact (q)]) = (r(p), s(p_\lact (q)))$ must equal $r([q_\ract (p)], s(q)) = (r(q_\ract (p)), s(q))$. The conclusion follows. 
    
\end{proof}

In general, $p_\lact$ and $q_\ract$ need not be particularly well behaved. Although they are degree-preserving by Lemma \ref{k-graph-mp-lem} and source- and range-preserving by Proposition \ref{prop:source-range-general}, one or both may fail to be injective or surjective (see Example \ref{ex:rho-non-isom}), or
 may not respect composition. 

 \begin{example}
\label{ex:rho-non-comp}
Suppose that $\Omega$ is a quasi-product $2$-graph (one-skeleton pictured to the right) with quasi-factors $\Lambda$ and $\Gamma$ (digraphs pictured to the left).
\[ 
\begin{tikzpicture}[ >=stealth]


\draw[-,dashed,line width=2] (3,1.5) to[](3,-4.25);
\draw[-,dashed, line width=2] (-2,-1.5) to[] (3,-1.5);

\node[scale=1.25] at (-2,1) {$G(\Lambda)$};
\node[scale=1.25] at (-2,-2) {$G(\Gamma)$};
\node[scale=1.25] at (4,1.25) {$G(\Omega)$};


\node[inner sep=0.8pt] (u) at (0,0) {$\scriptstyle u$}
edge[loop,->,in=225,out=135,looseness=15] node[auto,black,swap] {$\scriptstyle h$} (u)
edge[loop,->,in=225,out=135,looseness=30] node[auto,black, left] {$\scriptstyle e$} (u);

\node[inner sep=0.8pt] (v) at (2,0) {$\scriptstyle v$}
;

\draw[->] (u.north east)
    parabola[parabola height=0.5cm] (v.north west);

\node[inner sep=6pt, above] at (1,0.5) {$\scriptstyle f$};
\draw[->] (u.south east)
    parabola[parabola height=-0.5cm] (v.south west);

\node[inner sep=6pt, below] at (1,-0.5) {$\scriptstyle g$};

\node[inner sep=0.8pt] (x) at (0,-3) {$\scriptstyle x$}
edge[loop,->,in=225,out=135,looseness=15] node[auto,black,swap] {$\scriptstyle d$} (x);

\node[inner sep=0.8pt] (y) at (2,-3) {$\scriptstyle y$};

\draw[->] (x.north east)
    parabola[parabola height=0.5cm] (y.north west);

\node[inner sep=6pt, above] at (1,-2.5) {$\scriptstyle b$};
\draw[->] (x.south east)
    parabola[parabola height=-0.5cm] (y.south west);

\node[inner sep=6pt, below] at (1,-3.5) {$\scriptstyle c$};


\node[inner sep=0.8pt] (u) at (5,-2.75) {$\scriptstyle (u,x)$};

\draw[->] (u) to[in=160, out=-160, looseness=5] node[inner sep=3pt, left]{$\scriptstyle (h,x)$}  (u);

\node[inner sep=0.8pt] (v) at (8,-2.75) {$\scriptstyle (v,x)$};

\draw[->] (u) to[in=210, out=260, looseness=5] node[inner sep=3pt, left]{$\scriptstyle (e,x)$}  (u);

\draw[->] (u) to[out=5, in=165] node[inner sep=3pt, above]{$\scriptstyle (f,x)$} (v);

\draw[->] (u) to[out=-5, in=195] node[inner sep=3pt, below]{$\scriptstyle (g,x)$} (v);

\node[inner sep=0.8pt] (u2) at (5,-0.25) {$\scriptstyle (u,y)$};

\draw[->] (u2) to[in=160, out=-160, looseness=5] node[inner sep=3pt, left]{$\scriptstyle (h,y)$}  (u2);

\node[inner sep=0.8pt] (v2) at (8,-0.25) {$\scriptstyle (v,y)$};

\draw[->] (u2) to[in=80, out=130, looseness=10] node[inner sep=3pt, above]{$\scriptstyle (e,y)$}  (u2);

\draw[->] (u2) to[out=5, in=165] node[inner sep=3pt, above]{$\scriptstyle (f,y)$} (v2);

\draw[->] (u2) to[out=-5, in=195] node[inner sep=3pt, below]{$\scriptstyle (g,y)$} (v2);

\draw[dashed, ->] (u) to[out=85, in=285] node[inner sep=10pt,black,left] {$\scriptstyle (u,b)$} (u2);

\draw[dashed, ->] (u) to[out=95, in=255] node[inner sep=10pt,black,right] {$\scriptstyle (u,c)$} (u2);

\draw[dashed, ->] (v) to[out=85, in=285] node[black,right] {$\scriptstyle (v,b)$}  (v2); 

\draw[dashed, ->] (v) to[out=95, in=255] node[black,left] {$\scriptstyle (v,c)$}  (v2);

\draw[dashed,->] (u) to[in=300, out=240, looseness=8] node[ below]{$\scriptstyle (u,d)$}  (u);
\draw[dashed,->] (v) to[in=240, out=300, looseness=8] node[ below]{$\scriptstyle (v,d)$}  (v);

\end{tikzpicture}
\]

Define the commuting squares in $\Omega$ by 
\begin{align*}
    (f,y) (u,b) &= (v,c) (g,x);  
   & (f,y) (u,c) &= (v,b) (f,x);\\
    (h,y)(u,b) &= (u,c)(e,x); & (e,y)(u,b) &= (u,c)(h,x); \\
    (h,y)(u,c) &= (u,b)(h,x); & (e,y)(u,c) &= (u, b)(e,x);
\end{align*}
and in all other situations we use the product graph relations.  (For instance, $(f,x)(u,d) = (v,d)(f,x) $.)
Then (MP3) implies that 
\begin{align*}
    (b_\ract(fh),x)&  = \left( (f,y)(h,y) \right) \ract (u,b) = \left((f,y) \ract \left( (h,y) \lact (u,b) \right) \right)  (h, y) \ract (u,b) \\
    &= ((f,y) \ract (u,c) )(b_\ract (h),x) = 
    (c_\ract(f)b_\ract(h),x)  \not= (b_\ract(f) b_\ract(h), x)
\end{align*}
since $b_\ract(f) = g$ while $c_\ract(f) = f.$  That is, $b_\ract$ does not respect composition.
\end{example}

The potential failure of $p_\lact, q_\ract$ to respect composition arises from
(MP2) and (MP3). 
However, these axioms do give us formulas describing how $p_\lact, q_\ract$ behave on compositions of paths.
\begin{prop}
\label{prop:lact-composition}
    Let $\Omega$ be a quasi-product with quasi-factors $\Lambda$ and $\Gamma$.  If $p = p^1 p^2 \in G(\Lambda)^*, q = q^1 q^2 \in G(\Gamma)^*$, then 
    \[ p_\lact(q^1 q^2) = (p_\lact(q^1))(q^1_\ract(p))_\lact(q_2) \qquad \text{ and } \qquad q_\ract(p^1 p^2) = (p^2_\lact(q))_\ract(p^1)(q_\ract(p^2)). \]
\end{prop}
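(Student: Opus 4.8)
The plan is to unwind Definition~\ref{def:path-action}, so that each claimed path-level identity becomes a statement about the Zappa--Sz\'ep actions $\lact,\ract$ on $\Omega_1 * \Omega_2$, and then apply the matched-pair axioms (MP2) and (MP3) of Definition~\ref{def:matched-pair}. Throughout I would freely use the disjoint-union decompositions $\Omega_1 = \bigsqcup_{w\in\Gamma^0}\Omega_1^w\times\{w\}$, $\Omega_2 = \bigsqcup_{x\in\Lambda^0}\{x\}\times\Omega_2^x$ of Proposition~\ref{dis-union-prop}, the source/range formulas of Proposition~\ref{prop:source-range-general}, and the fact (a consequence of (KG0) in Theorem~\ref{thm:kgraph}) that in each fibre category $\Omega_1^w$ or $\Omega_2^x$ the composite of two classes is the class of the concatenation of representatives.

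For the first identity, write $q = q^1 q^2$ and start from the defining equation
\[
(r(p),[p_\lact(q^1q^2)]) = ([p],r(q^1))\lact(s(p),[q^1q^2]).
\]
In $\Omega_2^{s(p)}$ we have $[q^1q^2] = [q^1][q^2]$, so $(s(p),[q^1q^2]) = (s(p),[q^1])(s(p),[q^2])$ is a composable product in $\Omega_2$, and (MP2) applies with $c_2 = ([p],r(q^1))$, $d_1 = (s(p),[q^1])$, $d_2 = (s(p),[q^2])$. The first resulting factor $([p],r(q^1))\lact(s(p),[q^1])$ is $(r(p),[p_\lact(q^1)])$ by definition. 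For the second factor $\big(([p],r(q^1))\ract(s(p),[q^1])\big)\lact(s(p),[q^2])$, I would use $([p],r(q^1))\ract(s(p),[q^1]) = ([q^1_\ract(p)],s(q^1))$ together with Proposition~\ref{prop:source-range-general} (which gives $s(q^1_\ract(p)) = s(p)$, $r(q^1_\ract(p)) = r(p)$, and $s(q^1) = r(q^2)$) to rewrite it as $([q^1_\ract(p)],r(q^2))\lact(s(q^1_\ract(p)),[q^2])$ and hence recognize it as $(r(p),[(q^1_\ract(p))_\lact(q^2)])$. Multiplying the two factors in $\Omega_2^{r(p)}$ then yields $[p_\lact(q^1q^2)] = [p_\lact(q^1)]\,[(q^1_\ract(p))_\lact(q^2)] = \big[(p_\lact(q^1))\,((q^1_\ract(p))_\lact(q^2))\big]$.

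The second identity is entirely parallel, with (MP3) in place of (MP2). Writing $p = p^1 p^2$, start from $([q_\ract(p^1p^2)],s(q)) = ([p^1p^2],r(q))\ract(s(p^2),[q])$, split $([p^1p^2],r(q)) = ([p^1],r(q))([p^2],r(q))$ in $\Omega_1^{r(q)}$, and apply (MP3) with $c_1 = ([p^1],r(q))$, $c_2 = ([p^2],r(q))$, $d_1 = (s(p^2),[q])$. The factor $c_2\ract d_1 = ([q_\ract(p^2)],s(q))$ is immediate, while $c_2\lact d_1 = (r(p^2),[p^2_\lact(q)])$ feeds into $c_1\ract(c_2\lact d_1) = ([p^1],r(q))\ract(r(p^2),[p^2_\lact(q)])$, which, using $s(p^1) = r(p^2)$ and the source/range formulas (in particular $r(p^2_\lact(q)) = r(q)$, $s(p^2_\lact(q)) = s(q)$), equals $([(p^2_\lact(q))_\ract(p^1)],s(q))$. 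Multiplying in $\Omega_1^{s(q)}$ gives $[q_\ract(p^1p^2)] = \big[(p^2_\lact(q))_\ract(p^1)\,(q_\ract(p^2))\big]$. (This is the general mechanism behind the failure of composition in Example~\ref{ex:rho-non-comp}, recovered there as the special case $p^1 = f$, $p^2 = h$, $q = b$.)

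The only point needing more than mechanical care is the passage from these equalities of classes in $\Omega$ back to equalities of \emph{paths}: by Definition~\ref{def:path-action}, $p_\lact$ and $q_\ract$ are color-order preserving, so $p_\lact(q')$ and $q_\ract(p')$ are characterized as the unique representatives of their classes having, respectively, the color order of $q'$ and of $p'$. I would finish by noting that the concatenations on the right-hand sides above have color order equal to that of $q^1q^2$ (resp.\ $p^1p^2$), since concatenation of representatives concatenates color orders, and that their classes are the product classes by (KG0); the uniqueness clause of (KG4) then upgrades each class identity to the claimed path identity. I do not expect a genuine obstacle here: the substantive work is the source/range bookkeeping of tracking which fibre $\Omega_1^w$ or $\Omega_2^x$ every term lives in, and Proposition~\ref{prop:source-range-general} supplies exactly that.
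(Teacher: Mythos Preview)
Your proposal is correct and follows essentially the same approach as the paper: split the composite in the appropriate fibre of $\Omega_2$ (resp.\ $\Omega_1$), apply (MP2) (resp.\ (MP3)), and identify each factor via Definition~\ref{def:path-action} together with the source/range bookkeeping of Proposition~\ref{prop:source-range-general}. The paper's proof is terser---it stops at the class-level equality in $\Omega_2^{r(p)}$ and leaves the second identity to ``a symmetric argument, using (MP3)''---whereas you additionally spell out the passage from class equality back to path equality via color-order preservation and (KG4), a step the paper leaves implicit.
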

\begin{proof}
    By (MP2) and the fact that $r(q^1_\ract(p)) = r(p),$
    \begin{align*}
        ([p], r(q))\lact(s(p), [q^1 q^2]) &= ([p], r(q)) \lact \left( s(p), [q^1])(s(p), [q^2])\right) \\
        &= (r(p), [p_\lact(q^1)]) \left( ([q^1_\ract(p)], s(q^1)) \lact (s(p), [q^2])\right) \\
        &= (r(p), [p_\lact(q^1)]) ( r(p), [(q^1_\ract(p))_\lact(q^2)]).
    \end{align*}
    A symmetric argument, using (MP3), establishes the second claim.
\end{proof}

If $\Omega = \Lambda \times \Gamma$ is a product $(k_1+k_2)$-graph, then $p_\lact, q_\ract$ are much better behaved.  In that case, 
for any $\lambda \in \Lambda, \gamma \in \Gamma$,
\[ (\lambda, \gamma) = (\lambda, r(\gamma)) (s(\lambda), \gamma) = (r(\lambda), \gamma)(\lambda, s(\gamma)).\]

In other words, we have $p_\lact=\id$ and $q_\ract  = \id$ for all $p \in G(\Lambda)^*, q \in G(\Gamma)^*.$ In what follows, we show that assuming one of the equalities above to hold in a quasi-product $k$-graph leads to a crossed-product-like structure. 

\begin{dfn}
\label{stable-fact-dfn}
    For a quasi-product $\Omega$ with quasi-factors $\Lambda$ and $\Gamma$ we say that $\Gamma$ is a \textit{stable} quasi-factor if $p_\lact=\id$ for all $p\in G(\Lambda)^*$.
\end{dfn}

\begin{remark}
    \label{rmk:right-stability}
As an alternative to Definition \ref{stable-fact-dfn}, if $q_\ract = \id$ for all $p \in G(\Lambda)^*$, we could say that $\Lambda$ is a stable quasi-factor.  In this article, for simplicity and coherence of notation, we will phrase all results using $\Gamma$ as the stable quasi-factor, leaving  the straightforward translation to left-stability to the reader.
\end{remark}

The following Proposition shows that one can determine stability by checking the way $\lact$ behaves on edges.

\begin{prop}
\label{prop:edges-enough}
    Let $\Omega$ be a quasi-product with quasi-factors $\Lambda$ and $\Gamma$. The quasi-factor $\Gamma$ is stable iff $e_\lact(g) = g$ for all edges $e \in G(\Lambda), g \in G(\Gamma).$ 
\end{prop}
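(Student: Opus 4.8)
The forward implication is immediate: if $\Gamma$ is stable then $p_\lact = \id$ for every $p \in G(\Lambda)^*$, so in particular $e_\lact(g) = g$ for all edges $e \in G(\Lambda)$ and $g \in G(\Gamma)$. All the content is in the converse, and the plan is to prove it by a two-stage induction, using Proposition \ref{prop:lact-composition} and the fact that $\lact$ is a genuine left action of $\Omega_1$ on $\Omega_2$.

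The first stage is to upgrade the hypothesis ``$e_\lact(g)=g$ for all edges'' to the statement ``$p_\lact(g)=g$ for \emph{every} path $p\in G(\Lambda)^*$ and every edge $g\in G(\Gamma)$''. The mechanism is that, because $\lact$ satisfies the left-action axiom $(c_1c_2)\lact d = c_1\lact(c_2\lact d)$, the path actions compose: if $p = p^1p^2$ is a composable path in $G(\Lambda)$, then (KG0) lets us factor the $\Omega_1$-morphism as $([p^1p^2],w) = ([p^1],w)([p^2],w)$, and unwinding Definition \ref{def:path-action} turns the action axiom into $(p^1p^2)_\lact = p^1_\lact \circ p^2_\lact$ as maps $G(\Gamma)^*\to G(\Gamma)^*$ (one tracks sources and ranges via \eqref{eq:mp-source-range} and Proposition \ref{prop:source-range-general} to see the relevant pairs lie in $\Omega_1*\Omega_2$). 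Writing an arbitrary path $p = g_1\cdots g_n$ as a concatenation of edges then gives $p_\lact = (g_1)_\lact\circ\cdots\circ(g_n)_\lact$; since each $(g_i)_\lact$ fixes every edge of $G(\Gamma)$ by hypothesis, so does $p_\lact$.

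The second stage is an induction on the length of $q$ showing $p_\lact(q) = q$ for all $p\in G(\Lambda)^*$, $q\in G(\Gamma)^*$. The base case $|q|\le 1$ is covered by the first stage (when $q=v\in\Gamma^0$ is a vertex, $p_\lact(v)$ has degree $0$ by Lemma \ref{k-graph-mp-lem} and range $v$ by Proposition \ref{prop:source-range-general}, hence equals $v$). For the inductive step, write $q = q^1q^2$ with $q^1$ an edge and $|q^2| = |q|-1$. Proposition \ref{prop:lact-composition} gives
\[ p_\lact(q) = p_\lact(q^1q^2) = \bigl(p_\lact(q^1)\bigr)\bigl((q^1_\ract(p))_\lact(q^2)\bigr). \]
By the first stage, $p_\lact(q^1) = q^1$; and since $q^1_\ract(p)\in G(\Lambda)^*$ while $|q^2| < |q|$, the inductive hypothesis gives $(q^1_\ract(p))_\lact(q^2) = q^2$. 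Hence $p_\lact(q) = q^1q^2 = q$, completing the induction.

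The expected difficulty is bookkeeping rather than conceptual: both the composition formula $(p^1p^2)_\lact = p^1_\lact\circ p^2_\lact$ and the application of Proposition \ref{prop:lact-composition} require repeatedly verifying that the pairs fed to $\lact$ genuinely lie in $\Omega_1 * \Omega_2$, which means invoking the source/range identities of \eqref{eq:mp-source-range} and Proposition \ref{prop:source-range-general} at every step. The one structural point worth emphasizing is \emph{why} the second induction is on $|q|$ and not on $|p|$: the map $q^1_\ract(p)$ produced by Proposition \ref{prop:lact-composition} is a possibly long path in $G(\Lambda)$, so an induction on $|p|$ would fail; it is precisely the first stage's conclusion that $p_\lact$ fixes edges for \emph{all} $p$ that makes the argument go through.
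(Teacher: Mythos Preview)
Your proposal is correct and follows essentially the same two-stage induction as the paper: first use the left-action axiom to pass from edges $e$ to arbitrary paths $p$ acting on edges, then use Proposition~\ref{prop:lact-composition} to induct on the length of $q$. Your write-up is slightly more explicit about the composition identity $(p^1p^2)_\lact = p^1_\lact\circ p^2_\lact$ and about the degenerate vertex case, but structurally the arguments are the same.
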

\begin{proof}
    Necessity is evident, so we prove sufficiency. Observe first that as $\lact$ is a left action, for any paths $p = p^1 p^2 \in G(\Lambda)^*, q \in G(\Gamma)^*$ we have 
    \begin{align*} ([p^1 p^2], r(q)) & \lact (s(p), [q]) = \left( ([p^1], r(q))([p^2], r(q))\right) \lact (s(p), [q]) \\
    & = ([p^1] , r(q)) \lact \left(([p^2], r(q)) \lact (s(p), [q])\right). \end{align*}
If $q, p^1, p^2$ are edges and we assume $e_\lact(g) = g$ for all edges $e \in G(\Lambda)^1, g \in G(\Gamma)^1$, the above becomes 
\[ ([p^1 p^2], r(q)) \lact (s(p), q) =(p^1, r(q)) \lact (r(p^2), q) = (r(p^1), q);\]
that is, $(p^1 p^2)_\lact(q) = q.$ 
By induction, it follows that $p_\lact(q) = q$ for all paths $p \in G(\Lambda)^*$ and all edges $q \in G(\Gamma)^1.$

 We now show that if $q = q^1 q^2$ is a path of length 2 in $G(\Gamma)^*$, then $p_\lact(q) = q$ for any $p \in G(\Lambda)^*$. By the previous paragraph,  $q^1_\ract(p)= :\tilde p \in G(\Lambda)^*$ also satisfies $\tilde p_\lact(g) = g$ for all edges $g \in G(\Gamma)^*.$ Thus,
  Proposition \ref{prop:lact-composition} implies that
    \begin{align*}
        p_\lact(q) &= (p_\lact(q^1))(q^1_\ract(p))_\lact(q^2)\\
        &= q^1 (\tilde p_\lact(q^2) )= q^1 q^2.
    \end{align*}
 By induction, we conclude that if $e_\lact(g) = g$ for all edges $e \in G(\Lambda)^1, g \in G(\Gamma)^1$, then for any paths $p \in G(\Lambda)^*, q \in G(\Gamma)^*$, we have $p_\lact(q) = q$; that is, $\Gamma$ is a stable quasi-factor.
\end{proof}

Definition \ref{stable-fact-dfn} imposes a substantial (perhaps a surprising) amount of structure on a stable quasi-product $k$-graph; the next several results detail this structure.  
To begin,  Definition \ref{stable-fact-dfn} is stated in terms of paths in $G(\Lambda)^*, G(\Gamma)^*;$ recall from Definition \ref{def:path-action} that we will interpret $p_\lact (q) \in G(\Gamma)^*$ as representing $([p], s(q)) \lact (s(p), [q]) \in \Omega_2^{r(p)},$ whereas we compute $[q]$ with respect to the equivalence relation in $\Omega_2^{s(p)}.$

However, the fact that for any $\omega \in \Omega_1, \omega \lact \cdot $ is a well-defined map on the $k_2$-graph $\Omega_2$ means that the  factorization rules on $G(\Gamma)^*$ yielding $\Omega_2^{s(p)}$ and $\Omega_2^{r(p)}$  agree in a stable quasi-product, as we now show.  That is, despite its  graphical definition, stability is a $k$-graph concept.

\begin{prop}
\label{prop: iso-comps-sigma}
Let $\Omega$ be a quasi-product with quasi-factor $\Lambda$ and stable quasi-factor $\Gamma$. For any $p \in G(\Lambda)^*$ and $q \in G(\Gamma)^*$ we have $[q]_{s(p)} = [q]_{r(p)}$.
In particular, if $\Lambda$ is connected, then for every $v , w\in \Lambda^0$ the factorization rules in $\Omega_2^v$ and $\Omega_2^w$ agree.
\label{prop:stable-factorization}
\end{prop}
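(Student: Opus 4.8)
The plan is to phrase everything in terms of the factorization rules that $\sim_\Omega$ induces on $G(\Gamma)^*$. By Proposition \ref{dis-union-prop}, for each $v\in\Lambda^0$ the component $\Omega_2^v = (\{v\}\times G(\Gamma))/\sim_\Omega$ is a $k_2$-graph with one-skeleton $G(\Gamma)$; write $\sim_v$ for the associated factorization rule on $G(\Gamma)^*$, so that $[q]_v$ is the $\sim_v$-class of $q$. The first assertion of the Proposition is then exactly the statement that $\sim_{s(p)}\, =\, \sim_{r(p)}$, which I would establish by proving the two inclusions separately.

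For $\sim_{s(p)}\subseteq\sim_{r(p)}$ I would use that $\lact\colon \Omega_1 * \Omega_2\to\Omega_2$ is a genuine (well-defined) function, together with stability. Suppose $q\sim_{s(p)} q'$. Since $\sim_\Omega$ is $(s,r,d)$-preserving, $r(q)=r(q')$, so $\omega_1 := ([p],r(q)) = ([p],r(q'))$ is an element of $\Omega_1$ with $s(\omega_1) = (s(p),r(q))$, and both $(\omega_1,(s(p),[q]_{s(p)}))$ and $(\omega_1,(s(p),[q']_{s(p)}))$ lie in $\Omega_1*\Omega_2$. The inputs $(s(p),[q]_{s(p)})$ and $(s(p),[q']_{s(p)})$ are equal elements of $\Omega_2$, so applying $\omega_1\lact -$ and using Definition \ref{def:path-action} with $p_\lact=\id$ gives $(r(p),[q]_{r(p)}) = (r(p),[q']_{r(p)})$; projecting onto the $\Omega_2^{r(p)}$ coordinate yields $q\sim_{r(p)} q'$.

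The reverse inclusion $\sim_{r(p)}\subseteq\sim_{s(p)}$ is the crux; here I would invoke (KG4) via Theorem \ref{thm:kgraph}, in the guise of the principle that on a fixed edge-colored directed graph one factorization rule cannot properly refine another. Concretely, let $q\sim_{r(p)} q'$. Degree-preservation forces $d(q)=d(q')$, so $q$ and $q'$ have the same multiset of edge colors. Let $\bar q$ be the unique path with $\bar q\sim_{s(p)} q$ whose color order equals that of $q'$ (it exists by (KG4) for $\sim_{s(p)}$). By the inclusion already proved, $\bar q\sim_{r(p)} q$, hence $\bar q\sim_{r(p)} q'$; since $\bar q$ and $q'$ both have the color order of $q'$ and lie in the same $\sim_{r(p)}$-class, (KG4) for $\sim_{r(p)}$ forces $\bar q=q'$, whence $q\sim_{s(p)} q'$. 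This gives $\sim_{s(p)}\,=\,\sim_{r(p)}$, i.e. $[q]_{s(p)}=[q]_{r(p)}$ for all $p\in G(\Lambda)^*$, $q\in G(\Gamma)^*$. (Equivalently, one may note that $[q]_{s(p)}\mapsto[q]_{r(p)}$ is a well-defined degree-preserving functor $\Omega_2^{s(p)}\to\Omega_2^{r(p)}$ restricting to the identity on $G(\Gamma)$, and a degree-preserving functor of $k$-graphs that is bijective on one-skeletons is automatically an isomorphism, again by (KG4).)

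For the ``in particular'' statement I would use connectedness of $\Lambda$: given $v,w\in\Lambda^0$, with the case $v=w$ being trivial, pick a string of edges $g_1,\dots,g_n$ in $G(\Lambda)$ linking $v$ to $w$ as in Definition \ref{1sk}. Applying the first part with $p=g_i$ shows $\sim_{s(g_i)}\,=\,\sim_{r(g_i)}$, so the induced factorization rule on $G(\Gamma)^*$ agrees at both endpoints of every $g_i$; since $g_i$ and $g_{i+1}$ share an endpoint, walking along the string shows all of these rules coincide, and in particular $\sim_v\,=\,\sim_w$. The main obstacle in the argument is the reverse inclusion in paragraph three: well-definedness of $\lact$ only supplies one direction for free, and closing the loop genuinely requires the rigidity of factorization rules encoded in (KG4).
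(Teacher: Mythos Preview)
Your proof is correct and follows essentially the same approach as the paper's: both establish $[q]_{s(p)}\subseteq [q]_{r(p)}$ from well-definedness of $\lact$ together with stability, and then close the reverse inclusion via (KG4) by counting one representative per color order. Your treatment is simply more explicit---spelling out the $\bar q$ argument and the edge-by-edge walk through the connecting string for the ``in particular'' clause---where the paper compresses these steps into a sentence.
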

\begin{proof}
	Fix a path $p \in G(\Lambda)$ and suppose $q, q' \in G(\Gamma)^*$ satisfy $q \sim q'$ in  $\Omega_2^{s(p)}$.  
 Since $\lact$ is defined at the level of the category, i.e.~the equivalence class, 
	\[ 
    (r(p), [p_\lact(q)]_{r(p)}) = ([p], r(q)) \lact (s(p), [q]_{s(p)})=([p], r(q')) \lact (s(p), [q']_{s(p)})
    \]
    must equal $ (r(p), [p_\lact(q')]_{r(p)}).$  
    Thus, if $\Gamma$ is stable, we must have $q \sim q'$ in $\Omega_2^{r(p)}$. 
    
    By definition, $p_\lact(q')$ has the same color order as $q'$.
As $q' \in [q]_{s(p)}$ was arbitrary,  and $[q]_{r(p)}$ (like $[q]_{s(p)})$ contains exactly one path of each color order,  this implies that the factorization rule in $\Omega_2^{r(p)}$ agrees with that in $\Omega_2^{s(p)}$.  From Definition \ref{def:quasi-factor}, we know there is $v \in \Lambda^0$ with $\Omega_2^v\cong \Gamma$ as $k_2$-graphs; the conclusion follows.
 \end{proof}

In particular, if $\Gamma$ is a stable quasi-factor and $\Lambda$ is connected, then for each $p \in G(\Lambda)^*,  p_\lact $
is not just a range-, source-, and degree-preserving map, but yields an isomorphism of $k_2$-graphs $ \Omega_2^{s(p)} \to \Omega_2^{r(p)}$, which we will denote $\widehat{p}_\lact$.  Theorem \ref{thm: rho isom} below shows that the stability of $\Gamma$ implies that $q_\ract$ also implements an isomorphism $\widehat{q}_\ract: \Omega_1^{r(q)} \to \Omega_1^{s(q)}$  of $k_1$-graphs for all $q\in G(\Gamma)^*$.
In certain cases, this reveals a crossed-product structure to $C^*(\Omega)$, as we discuss in Section \ref{sec:cycle-factor}.

\begin{thm}
\label{thm: rho isom}
    If $\Omega$ is a row-finite quasi-product with quasi-factor $\Lambda$ and stable quasi-factor $\Gamma$,  then for any $q \in G(\Gamma)^*$, $q_\ract$ induces a $k_1$-graph isomorphism $\widehat{q}_\ract: \Omega_1^{r(q)} \to \Omega_1^{s(q)}$. 
    If $\Gamma$ is connected, then for all $v, w \in \Gamma^0$ we have $\Omega_1^v \cong \Omega_1^w \cong \Lambda.$
\end{thm}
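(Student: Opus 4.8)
The plan is to show that for each $q \in G(\Gamma)^*$, the map $q_\ract \colon \Omega_1^{r(q)} \to \Omega_1^{s(q)}$ is a bijective degree-, source-, and range-preserving functor, i.e.\ a $k_1$-graph isomorphism. We already know from Lemma \ref{k-graph-mp-lem} and Proposition \ref{prop:source-range-general} that $q_\ract$ preserves degree, source, and range; and Proposition \ref{prop:stable-factorization} tells us that when $\Gamma$ is stable the factorization rules on all the $\Omega_2^x$ agree, so that $p_\lact$ implements $k_2$-graph isomorphisms $\widehat p_\lact \colon \Omega_2^{s(p)} \to \Omega_2^{r(p)}$. The first task is to establish the analogous statement for $\ract$: that in a stable quasi-product, $q_\ract$ respects composition and descends to a functor on the quotient $k_1$-graphs. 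For the functoriality, I would feed stability into Proposition \ref{prop:lact-composition}: the identity $q_\ract(p^1 p^2) = (p^2_\lact(q))_\ract(p^1)\,(q_\ract(p^2))$ becomes $q_\ract(p^1 p^2) = (q_\ract(p^1))(q_\ract(p^2))$ once $p^2_\lact(q) = q$, which is exactly stability. That $q_\ract$ is well-defined on equivalence classes (i.e.\ on $\Omega_1$, not just on $G(\Lambda)^*$) follows because $\ract$ is defined at the category level, combined with the fact — provable just as in Proposition \ref{prop:stable-factorization}, using that $p_\lact$ is injective on classes — that $q$ acting by $\ract$ cannot collapse the distinct factorization classes in $\Omega_1^{r(q)}$. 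So $\widehat q_\ract$ is a well-defined degree-preserving functor $\Omega_1^{r(q)} \to \Omega_1^{s(q)}$.

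The second task is to produce a two-sided inverse. The natural candidate is $\widehat{(p_\lact(q))}_\ract$ or, more symmetrically, an isomorphism built from $q$ ``run backwards.'' Here I expect to use the matched-pair axioms to show that the pair $(p_\lact(q))_\ract(p) $, suitably interpreted, recovers $p$: concretely, applying (MP1)–(MP3) to the composable data $\big((r(p),[p_\lact(q)]),([q_\ract(p)],s(q))\big)$ and reading off the factorization $\omega_2 \omega_1 = (\omega_2 \ract \omega_1)(\omega_2 \lact \omega_1)$ of the matched pair in the other order should give, after unwinding Definition \ref{def:path-action}, that $(p_\lact(q))_\ract$ inverts $q_\ract$ and $(q_\ract(p))_\lact$ inverts $p_\lact$. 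This is where row-finiteness enters: it is not needed for injectivity or for the algebraic inversion identities, but to guarantee that the candidate inverse is defined everywhere and that we are not missing edges, one wants each $v\Omega_1^n$ finite so that an injective degree-preserving self-correspondence between finite sets of the same cardinality is automatically surjective. Thus I would argue: $\widehat q_\ract$ is injective (stability forces $q_\ract$ to be injective on each color order, as in the uniqueness clause of (KG4)), it maps $v\,\Omega_1^n$ into $(\text{something})\,\Omega_1^n$, both sets are finite, and a counting/inverse argument closes surjectivity. Then $\widehat q_\ract$ is a bijective functor preserving all the structure, hence a $k_1$-graph isomorphism.

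For the final sentence: if $\Gamma$ is connected, then for any $v,w\in\Gamma^0$ there is a (not necessarily composable) string of edges in $G(\Gamma)$ linking $v$ to $w$. Each single edge $g$ gives an isomorphism $\widehat g_\ract \colon \Omega_1^{r(g)}\to\Omega_1^{s(g)}$, and its inverse (just constructed) goes the other way, so composing these isomorphisms along the string yields $\Omega_1^v \cong \Omega_1^w$; the only subtlety is handling edges traversed ``backwards,'' which is precisely what invertibility of $\widehat g_\ract$ provides. Finally, Definition \ref{def:quasi-factor} supplies some $x\in\Lambda^0$ — wait, rather some $v_0\in\Gamma^0$ with $\Omega_1^{v_0}\cong\Lambda$ as $k_1$-graphs (via $\psi_1$), so $\Omega_1^v\cong\Lambda$ for every $v$. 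The main obstacle I anticipate is the bookkeeping in the second task — verifying cleanly, via the matched-pair axioms and Definition \ref{def:path-action}, that the ``reverse'' action genuinely inverts $q_\ract$ at the level of equivalence classes; once that algebraic identity is in hand, the row-finiteness/counting step and the connectedness step are routine.
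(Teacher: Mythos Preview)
Your overall plan coincides with the paper's: use stability in (MP3) (equivalently, in Proposition~\ref{prop:lact-composition}) to see that $\widehat q_\ract$ respects composition, establish injectivity, and then invoke row-finiteness together with source/range/degree preservation to deduce surjectivity by a counting argument on the finite sets $v\Omega_1^n w$. The connectedness argument for the last sentence is also the paper's.

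The ``two-sided inverse'' discussion in your second task, however, is a dead end and should be discarded. Under stability $p_\lact(q)=q$, so your candidate inverse $(p_\lact(q))_\ract$ is $q_\ract$ itself, not its inverse; and there is no ``other-order'' matched-pair factorization $\omega_2\omega_1=(\omega_2\ract\omega_1)(\omega_2\lact\omega_1)$ available in this setup---Lemma~\ref{k-graph-mp-lem} only gives you the $\Omega_1*\Omega_2\to\Omega_2*\Omega_1$ direction. The paper instead proves injectivity directly, and this is what your parenthetical about (KG4) is gesturing toward: if $[q_\ract(p)]=[q_\ract(p')]$, then $r(p)=r(p')$, $s(p)=s(p')$, $d(p)=d(p')$, and stability gives $p_\lact(q)=q=p'_\lact(q)$; hence $([p],r(q))(s(p),[q])$ and $([p'],r(q))(s(p'),[q])$ have the \emph{same} $\Omega_2*\Omega_1$ factorization $(r(p),[q])([q_\ract(p)],s(q))$, and the factorization property forces $[p]=[p']$. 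Replace the inverse-building attempt with this one-line argument and your plan is exactly the paper's proof.
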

\begin{proof}
   We will check that the map $\widehat{q}_\ract: \Omega_1^{r(q)} \to \Omega_1^{s(q)}$ given by 
   \[ \widehat{q}_\ract([p]_{r(q)}) =  [q_\ract(p)]_{s(p)}
   \]
   is injective, surjective, and respects composition.

For injectivity, suppose that $[q_\ract(p)] = [q_\ract(p')]$.   Then $r(p) = r(p')$ and $s(p) = s(p')$, since $r(p) = r(q_\ract(p)) = r(q_\ract(p')) = r(p')$ by   Proposition \ref{prop:source-range-general}.  
Stability implies  that  
$  p_\lact(q) = q = p'_\lact( q) $, so 
 Lemma \ref{k-graph-mp-lem} yields 
   \begin{align*} 
   ([p], r(q))(s(p), [q]) & =  \left( ([p], r(q)) \lact (s(p), [q]) \right) \left( ([p], r(q)) \ract (s(p), [q]) \right) \\
   &= (r(p), [p_\lact(q)])([q_{\ract}(p)], s(q)) = (r(p), [p'_\lact(q)])([q_\ract(p')], s(q))  \\
   &= ([p'], r(q))(s(p'), [q]).
   \end{align*}
 Since $q_\ract(\cdot) $ preserves color order, our hypothesis that $[q_\ract(p)] = [q_\ract(p')]$ implies that  $d(p') = d(p)$.  
 The factorization property thus  implies that $([p'], r(q)) = ([p], r(q))$, proving injectivity.

   The stability of $\Gamma$, together with (MP3), implies that $\widehat{q}_\ract$ preserves composition:
   \begin{align*}
      ([q_\ract(p^1p^2)], r(q)) &= ([p^1p^2], r(q)) \ract (s(p^2), [q])  \\
       &= (([p^1], r(q))\ract (s(p^1),
       [p^2_\lact(q)]))([q_\ract(p^2)],r(q)) \\
       &= ([p^1],r(q))\ract (s(p^1),[q])([q_\ract(p^2)],r(q)) \\
       &=([q_\ract(p^1) q_\ract(p^2)],r(q))
   \end{align*}
   since $s(p^1) = r(p^2).$ 

   To see that $\widehat{q}_\ract$ is onto, recall that $\widehat{q}_\ract$ is well defined and injective, and $G(\Omega_1^{r(q)}) \cong G(\Omega_1^{s(q)}).$ 
   Moreover, since $\Omega$ is row-finite, there are only finitely many elements of $(v, r(q))\Omega_1^{r(q)} (w, r(q))$ of a given degree, for any $v, w \in \Lambda^0$.
 Since $\widehat{q}_\ract$ preserves source, range, and degree, the injectivity of $\widehat{q}_\ract$ forces it to be an isomorphism.
\end{proof}

The existence of the $k_1$-graph isomorphism $\widehat{q}_\ract$ is dependent on the stability of $\Gamma$ as a quasi-factor.  Perhaps surprisingly, it is not enough to assume that $\widehat{p}_\lact$ is a $k_2$-graph isomorphism in order to conclude that $\widehat{q}_\ract$ is a $k_1$-graph isomorphism.

\begin{example}
\label{ex:rho-non-isom}
Let $\Lambda $ be the following 1-graph with two vertices and four edges:
\[ 
\begin{tikzpicture}[yscale=0.75, >=stealth]

\node[inner sep=0.8pt] (u) at (0,0) {$\scriptstyle u$}
edge[loop,->,in=225,out=135,looseness=15] node[auto,black,swap] {$\scriptstyle h$} (u);

\node[inner sep=0.8pt] (v) at (2,0) {$\scriptstyle v$}
edge[loop,->,in=-45,out=45,looseness=15] node[auto,black,right] {$\scriptstyle e$} (v);

\draw[->] (u.north east)
    parabola[parabola height=0.5cm] (v.north west);

\node[inner sep=6pt, above] at (1,0.5) {$\scriptstyle f$};
\draw[->] (u.south east)
    parabola[parabola height=-0.5cm] (v.south west);

\node[inner sep=6pt, below] at (1,-0.5) {$\scriptstyle g$};
\end{tikzpicture}
\]

Let $\Gamma = \Lambda$; to reduce confusion, we will label the edges in $\Gamma$ $a,b,c,d$ instead of $e, f, g, h$ respectively, with vertices $x = r(d)$ and $y = r(a)$. Then the 1-skeleton of the quasi-product is 
\[ 
\begin{tikzpicture}[ >=stealth]

\node[inner sep=0.8pt] (u) at (0,0) {$\scriptstyle (u,x)$};

\draw[->] (u) to[in=160, out=-160, looseness=5] node[inner sep=3pt, left]{$\scriptstyle (h,x)$}  (u);

\node[inner sep=0.8pt] (v) at (3,0) {$\scriptstyle (v,x)$};

\draw[->] (v) to[in=20, out=-20, looseness=5] node[inner sep=3pt, right]{$\scriptstyle (e,x)$}  (v);

\draw[->] (u) to[out=5, in=165] node[inner sep=3pt, above]{$\scriptstyle (f,x)$} (v);

\draw[->] (u) to[out=-5, in=195] node[inner sep=3pt, below]{$\scriptstyle (g,x)$} (v);

\node[inner sep=0.8pt] (u2) at (0,2.5) {$\scriptstyle (u,y)$};

\draw[->] (u2) to[in=160, out=-160, looseness=5] node[inner sep=3pt, left]{$\scriptstyle (h,y)$}  (u2);

\node[inner sep=0.8pt] (v2) at (3,2.5) {$\scriptstyle (v,y)$};

\draw[->] (v2) to[in=20, out=-20, looseness=5] node[inner sep=3pt, right]{$\scriptstyle (e,y)$}  (v2);

\draw[->] (u2) to[out=5, in=165] node[inner sep=3pt, above]{$\scriptstyle (f,y)$} (v2);

\draw[->] (u2) to[out=-5, in=195] node[inner sep=3pt, below]{$\scriptstyle (g,y)$} (v2);

\draw[dashed, ->] (u) to[out=85, in=285] node[inner sep=10pt,black,left] {$\scriptstyle (u,b)$} (u2);

\draw[dashed, ->] (u) to[out=95, in=255] node[inner sep=10pt,black,right] {$\scriptstyle (u,c)$} (u2);

\draw[dashed, ->] (v) to[out=85, in=285] node[black,right] {$\scriptstyle (v,b)$}  (v2); 

\draw[dashed, ->] (v) to[out=95, in=255] node[black,left] {$\scriptstyle (v,c)$}  (v2);

\draw[dashed,->] (u2) to[in=60, out=120, looseness=8] node[above]{$\scriptstyle (u,a)$}  (u2);
\draw[dashed,->] (v2) to[out=60, in=120, looseness=8] node[above]{$\scriptstyle (v,a)$}  (v2);
\draw[dashed,->] (u) to[in=300, out=240, looseness=8] node[ below]{$\scriptstyle (u,d)$}  (u);
\draw[dashed,->] (v) to[in=240, out=300, looseness=8] node[ below]{$\scriptstyle (v,d)$}  (v);

\end{tikzpicture}
\]

Define the commuting squares by 
\begin{align*}
    (g,y) (u,b) &= (v,b)(g,x); & 
   (f,y) (u,b) &= (v,c) (g,x); \\
    (g,y) (u,c) &= (v,c) (f,x); & (f,y) (u,c) &= (v,b) (f,x);
\end{align*}
and in all other situations we use the product graph relations.  (For instance, $(v,a)(f,y) = (f,y)(u,a)$.)
Then 
$(b_\ract(g),y) = (g,x) = (b_\ract(f),y)$, so $\widehat{b}_\ract$ is not an isomorphism. 

However, we claim that $\widehat{p}_\lact$ is an isomorphism for all paths $p \in G(\Lambda)^*.$ 
 Indeed, if $p$ contains $f$, then $p_\lact(q)$ swaps $b$ and $c$ if either of them occurs in $q$; otherwise, $p_\lact = id$.

To see this, we first consider a path $p$ of the form $e^n f h^m$. 
Observe  that
\[ (e, x)^n (f,x) (h,x)^m (u,d)^k = (e,x)^n (v,d)^k (f,x) (h,x)^m = (v,d)^k (e,x)^n (f,x) (h,x)^k,\]
and similarly $(e, y)^n (f,y) (h,y)^m (u,a)^k = (v,a)^k(e, y)^n (f,y) (h,y)^m .$  That is, on both $a^k$ and $d^k$, $(e^n f h^m)_\lact$ acts as the identity.
It remains to compute the action of $(e^n f h^m)_\lact$ on  paths of the form $a^j b d^k.$  Observe that
\begin{align*}
    (e,y)^n (f,y) (h,y)^m (u, a)^j (u,b) (u, d)^k & = (e,y)^n (f,y) (u,a)^j (h,y)^m (u,b) (u,d)^k \\
    & = (e,y)^n (v,a)^j (f,y) (u,b) (h,x)^m(u,d)^k \\
   & = (v,a)^j (e,y)^n (v,c) (g,x) (u,d)^k (h,x)^m \\
   &= (v,a)^j  (v,c) (e,x)^n (v,d)^k (g,x) (h,x)^m \\
    &= (v,a)^j (v,c) (v,d)^k (e,x)^n (g,x) (h,x)^m.
\end{align*}

Consequently, $(e^n f h^m)_\lact(a^j b d^k) = a^j c d^k.$
Similarly, $(e^n f h^m)_\lact(a^j c d^k) = a^j b d^k$. 

Similar computations to the above reveal that $(e^n g h^m)_\lact =id$. 
In particular, for each path $p$ in $G(\Lambda)^*$ and $q$ in $G(\Gamma)^*$, $p_\lact(q)$ contains the same number of edges with a given source and range as $q$ does, and if $p$ contains $f$ then $p_\lact$ interchanges $b$ and $c$ if either of them occurs in $q$.

We conclude that $p_\lact$ is a bijection (and, being range- and source-preserving, is thus an isomorphism) for all paths $p$ in $G(\Lambda)^*$.
\end{example}

Given the structural constraints imposed by stability, we conclude this section with the following method of guaranteeing that a $k_2$-graph is a stable quasi-factor.

\begin{dfn}
    Let $G$ and $F$ be edge-colored digraphs in $k$ colors. We say $\phi: G\to F$ is  a {\em homomorphism} of colored graphs if it preserves edge coloring and intertwines with $s$ and $r$. When $G=F$ and $\phi$ is bijective, we call $\phi$ an {\em automorphism}.  
\end{dfn}

\begin{lem}
\label{lem: stab-guarantee}
    Let $\mathcal F\subseteq \Aut(G(\Gamma))$ be the subgroup fixing vertices. If $\mathcal F=\{\id\}$, then $\Gamma$ is always a stable quasi-factor.
\end{lem}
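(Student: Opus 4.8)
The plan is to reduce to an edge-level statement via Proposition \ref{prop:edges-enough} and then observe that the hypothesis $\mathcal F = \{\id\}$ is extremely restrictive: it forces every edge of $G(\Gamma)$ to be uniquely determined by its range, source, and color, so that any map on $G(\Gamma)^1$ preserving these data is the identity. Granting that, stability is essentially automatic. So let $\Omega$ be any quasi-product with quasi-factors $\Lambda$ and $\Gamma$; by Proposition \ref{prop:edges-enough} it suffices to prove $e_\lact(g) = g$ for every edge $e \in G(\Lambda)^1$ and every edge $g \in G(\Gamma)^1$.

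The key point is that when $\mathcal F = \{\id\}$, no two distinct edges of $G(\Gamma)$ have the same range, source, and color. Indeed, if $g \neq g'$ were such a pair, the permutation of $G(\Gamma)^1$ that interchanges $g$ and $g'$ and fixes every other edge, together with the identity on $\Gamma^0$, would be a colored-graph automorphism fixing all vertices, hence a nontrivial element of $\mathcal F$ --- a contradiction. (In fact this argument shows that $\mathcal F = \{\id\}$ is \emph{equivalent} to $G(\Gamma)$ having at most one edge of each color between each ordered pair of vertices, though only this direction is needed.)

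To conclude, fix edges $e \in G(\Lambda)^1$ and $g \in G(\Gamma)^1$. By Lemma \ref{k-graph-mp-lem}, the left action of the edge $([e],r(g))$ of $\Omega_1$ on the composable edge $(s(e),[g])$ of $\Omega_2$ is again an edge, so $e_\lact(g) \in G(\Gamma)^1$; and by Lemma \ref{k-graph-mp-lem} together with Proposition \ref{prop:source-range-general}, $e_\lact(g)$ has the same range, source, and color as $g$. By the previous paragraph, $g$ is the only edge of $G(\Gamma)$ with that range, source, and color, so $e_\lact(g) = g$, and Proposition \ref{prop:edges-enough} then shows $\Gamma$ is a stable quasi-factor of $\Omega$; since $\Omega$ was an arbitrary quasi-product having $\Gamma$ as a quasi-factor, $\Gamma$ is \emph{always} a stable quasi-factor. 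I do not anticipate a genuine obstacle here: the only non-formal ingredient is the ``transposition automorphism'' observation that pins down the meaning of $\mathcal F = \{\id\}$, and that is a one-line argument. It is worth noting that, in contrast to Theorem \ref{thm: rho isom}, no row-finiteness or connectedness hypothesis enters, and the argument is uniform in the ambient quasi-product $\Omega$.
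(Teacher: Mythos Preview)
Your proof is correct and follows essentially the same strategy as the paper: show that $e_\lact$ (or $p_\lact$) fixes every edge of $G(\Gamma)$ because it preserves source, range, and color, and then invoke Proposition~\ref{prop:edges-enough}. The paper phrases this as ``$p_\lact \in \mathcal F$, hence $p_\lact = \id$ on edges,'' whereas you make explicit, via the transposition argument, that $\mathcal F = \{\id\}$ forces each edge to be uniquely determined by its source, range, and color --- which in fact cleanly justifies the step the paper glosses over (namely, why a source/range/color-preserving endomorphism of $G(\Gamma)$ can be treated as an element of $\mathcal F$ without first checking bijectivity).
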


\begin{proof}
    Let $p\in G(\Lambda)^*$ be arbitrary. As $p_\lact: G(\Gamma)^* \to G(\Gamma)^*$ 
   is range-, source- and degree-preserving, when we view $p_\lact$ as a graph homomorphism, 

   $p_\lact\in \mathcal F$. Since $\mathcal F = \{ \id\}$, $p_\lact$ acts as the identity on edges (and vertices) of $G(\Gamma)$. Proposition \ref{prop:edges-enough} therefore implies that $\Gamma$ is stable. \qedhere

\end{proof}

\section{Cycle Graph Factors} 
\label{sec:cycle-factor}

From Proposition \ref{dis-union-prop} and  Theorem \ref{thm: rho isom}, we know that if $\Omega$ is a quasi-product with  quasi-factor $\Lambda$ and stable quasi-factor $\Gamma$, then $\Omega$ contains $\Gamma^0$-many isomorphic copies of $\Lambda$, with $\ract$ implementing the isomorphisms between the copies. In this section, we study how, when $C^*(\Gamma)$ can be viewed as a group $C^*$-algebra, then $\ract$ yields a semidirect product structure on $C^*(\Omega)$.
That is, in this setting, a quasi-product becomes a crossed product; see Theorem \ref{thm:circle-xprod} below.

We begin by recalling the definition of a $k$-graph $C^*$-algebra. In the following definition, 
and the rest of this section,
we follow Kumjian and Pask \cite{kp} and assume that our $k$-graphs are {\em row-finite} (that is, $v\Lambda^n$ is finite for all $v \in \Lambda^0$ and $n\in \N^k$) and {\em source-free} (that is, $v\Lambda^n \not= \emptyset$ for all $v, n$).

\begin{dfn}(cf.~\cite[Definitions 1.5]{kp}, \cite[Theorem C.1]{rsy-jfa}, \cite[Definition 7.4]{kps-hlogy})
\label{def:CK-relns}
    Let $\Lambda$ be a row-finite, source-free $k$-graph, and let $E = \{ e_1, \ldots, e_k\}$ denote the standard generators of $\N^k$.  A {\em Cuntz--Krieger $\Lambda$-family} in a $C^*$-algebra $B$ is a collection of partial isometries $\{ S_v: v \in \Lambda^0\} \cup \{ S_e: e \in \Lambda^E\} \subseteq B$ such that:
    \begin{enumerate}
        \item[(CK1)] $\{ S_v: v \in \Lambda^0\}$ is a set of mutually orthogonal projections.
        \item[(CK2)] Whenever $e, e', f, f' \in \Lambda^E$ satisfy $e f \sim f' e'$, then $S_e S_f = S_{f'} S_{e'}.$
        
        \item[(CK3)] For all $e \in \Lambda^E$, $S_e ^* S_e = S_{s(e)}$.
        \item[(CK4)] For all $1\leq i \leq k $ and all $v \in \Lambda^0$, 
        $\displaystyle S_v = \sum_{e \in v\Lambda^{e_i}} S_e S_e^*.$
    \end{enumerate}
   The universal $C^*$-algebra generated by a family of partial isometries $\{ S_v: v\in \Lambda^0\} \cup \{ S_e: e \in \Lambda^E\}$ is the $k$-graph $C^*$-algebra $C^*(\Lambda)$. 
\end{dfn}

   When $p = p_1 \cdots p_n \in G(\Lambda)^*$ represents $\lambda \in \Lambda$, (CK2) implies that $S_\lambda:= S_{p_1} \cdots S_{p_n}$ is well defined.  Furthermore, straightforward computations using the Cuntz--Krieger relations (CK1-4) show that $\{ S_\lambda S_\mu^*: \lambda, \mu \in \Lambda\}$ densely spans $C^*(\Lambda)$.

\begin{dfn}

 \label{cycle-defn}
Let $C_{n,k_2}$ be the (uniquely defined) $k_2$-graph whose 1-skeleton consists of $k_2$ parallel $n$-cycles, one of each color.  (There is only one possible choice of factorization rule on such a 1-skeleton.) Writing $C_{n, k_2}^0 = \{ w_0, \ldots, w_{n-1}\}$, the $j$th $n$-cycle has edges $\{f_{0,j},\dots,f_{n-1,j}\}$  such that $s(f_{i,j})=w_i$ and $r(f_{i,j})=w_{i+1}$ (mod $n$).
\end{dfn}

\begin{figure}[H]
	\centering
	\stepcounter{thm}
	\scalebox{.8}{
	\begin{tikzpicture}[roundnode/.style={circle, draw=black!60, fill=gray!5, very thick, minimum size=3mm}]
           \node[roundnode]        (w)                         {};
           \node[roundnode]        (x)       [below=2 of w]  {};
           \node[roundnode]        (v)       [right= 2 of w]  {};
           \node[roundnode]        (y)     [right=2 of x]      {};
           \draw[<-, line width=1.1, color=red, dashed] (w.south west) to[bend right=15] (x.north);
           \draw[<-, color=blue,line width=1.1, dotted] (w.south east) to[bend left=15] (x.north);
           \draw[<-] (w.south) to[] (x.north);

           \draw[->, line width=1.1, color=red, dashed] (v.south) to[bend left=15] (y.north east);
           \draw[->, color=blue,line width=1.1, dotted] (v.south) to[bend right=15] (y.north west);
           \draw[->] (v.south) to[] (y.north);

           \draw[->, line width=1.1, color=red, dashed] (w.east) to[bend left=15] (v.north west);
           \draw[->, color=blue,line width=1.1, dotted] (w.east) to[bend right=15] (v.south west);
           \draw[->] (w.east) to[] (v.west);

           \draw[<-, line width=1.1, color=red, dashed] (x.south east) to[bend right=15] (y.west);
           \draw[<-, color=blue,line width=1.1, dotted] (x.north east) to[bend left=15] (y.west);
           \draw[<-] (x.east) to[] (y.west);
           \end{tikzpicture}
       }
       \caption{Above is pictured $C_{4,3}$. Note that the factorization rule is 
       uniquely determined. This trait persists for arbitrary $C_{n,k_2}$.}
   \end{figure}
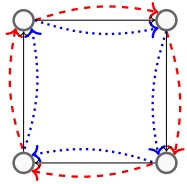

 In this section we will focus our attention on quasi-products $\Omega$ with quasi-factors $\Lambda$ (for an arbitrary $k_1$-graph $\Lambda$) and $C_{n,k_2}$. Note, for all $n,k_2\in \N$ the higher-rank graph $C_{n,k_2}$ satisfies the hypothesis of Lemma \ref{lem: stab-guarantee}. This means that $C_{n,k_2}$ is always a stable quasi-factor. In particular, thanks to Theorem \ref{thm: rho isom}, this ensures that each $(\widehat{f}_{i,j})_\ract$ is an isomorphism of $k_1$-graphs. Thus, there exists a $*$-isomorphism $\rho_{i,j}:C^*(\Omega_1^{w_{i+1}})\to C^*(\Omega_2^{w_i})$ induced by $(\widehat{f}_{i,j})_\ract$.

\begin{lem}
\label{lem: aut oplus}
    Let $\Omega$ be a quasi-product with quasi-factors $\Lambda$ and $C_{n,k_2}$.
    For each fixed  $1\leq j \leq k_2$, the family of isomorphisms $\left \{ \rho_{i,j}:C^*(\Omega_1^{w_{i+1}})\to C^*(\Omega_1^{w_{i}})\right \}_{i=1}^n$ induced by $(\widehat{f}_{i,j})_\ract$ yields $\displaystyle 
        \rho_{j} \in \Aut \left( \bigoplus_{i=0}^{n-1} C^*(\Omega_1^{w_i}) \right) 
       $
        given by 
        \begin{align*}  \rho_j(a_0,a_1,\cdots,a_{n-1}) = & (\rho_{0,j}(a_{1}),\rho_{1,j}(a_{2}),\cdots, \rho_{n-1,j}(a_{0})).
    \end{align*}  
    
\end{lem}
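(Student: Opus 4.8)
The plan is to recognize $\rho_j$ as a coordinatewise composite of the $*$-isomorphisms $\rho_{i,j}$ with a cyclic permutation of the summands, and then to write down an explicit inverse; everything is then formal. Throughout, indices are read modulo $n$. First I would recall why the component maps are available: since $s(f_{i,j}) = w_i$ and $r(f_{i,j}) = w_{i+1}$, and since $C_{n,k_2}$ is a stable quasi-factor by Lemma \ref{lem: stab-guarantee}, Theorem \ref{thm: rho isom} shows that $(\widehat{f}_{i,j})_\ract$ is a $k_1$-graph isomorphism $\Omega_1^{w_{i+1}} \to \Omega_1^{w_i}$. By the universal property in Definition \ref{def:CK-relns}, this graph isomorphism induces the $*$-isomorphism $\rho_{i,j}\colon C^*(\Omega_1^{w_{i+1}}) \to C^*(\Omega_1^{w_i})$, whose inverse $\rho_{i,j}^{-1}$ is the $*$-isomorphism induced by the inverse graph isomorphism.

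Next I would verify that the displayed formula defines a $*$-homomorphism of $B := \bigoplus_{i=0}^{n-1} C^*(\Omega_1^{w_i})$. Letting $\pi_i \colon B \to C^*(\Omega_1^{w_i})$ denote the $i$-th coordinate projection, the formula reads $\pi_i \circ \rho_j = \rho_{i,j} \circ \pi_{i+1}$; equivalently, $\rho_j = \big(\bigoplus_{i=0}^{n-1} \rho_{i,j}\big) \circ \sigma$, where $\sigma$ cyclically shifts the coordinates of $B$. This composition is legitimate precisely because $\rho_{i,j}$ carries the $(i+1)$-st summand to the $i$-th. Since each $\rho_{i,j}$ is linear, multiplicative and $*$-preserving, and all algebraic operations in a finite direct sum of $C^*$-algebras are computed coordinatewise, $\rho_j$ inherits these three properties and is therefore a $*$-homomorphism.

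Finally I would exhibit the inverse: define $\psi_j$ on $B$ by $\pi_i \circ \psi_j = \rho_{i-1,j}^{-1} \circ \pi_{i-1}$. The same coordinatewise reasoning shows $\psi_j$ is a $*$-homomorphism, and a one-line check gives $\pi_i(\rho_j \psi_j(a)) = \rho_{i,j}\big(\rho_{i,j}^{-1}(\pi_i(a))\big) = \pi_i(a)$ and, symmetrically, $\psi_j \rho_j = \id_B$. Hence $\rho_j$ is a bijective $*$-homomorphism, i.e.\ a $*$-automorphism of $B$, as claimed. I do not anticipate a genuine obstacle here; the only points needing attention are the bookkeeping of indices modulo $n$ — so that the cyclic shift $\sigma$ and the component isomorphisms $\rho_{i,j}$ compose legally — and the appeal to functoriality of $C^*(-)$ to turn the $k_1$-graph isomorphisms $(\widehat{f}_{i,j})_\ract$ into $*$-isomorphisms of the associated $C^*$-algebras.
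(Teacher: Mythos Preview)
Your proposal is correct and follows essentially the same approach as the paper: invoke Theorem~\ref{thm: rho isom} (via stability of $C_{n,k_2}$) to get the $k_1$-graph isomorphisms $(\widehat{f}_{i,j})_\ract$, pass to $*$-isomorphisms $\rho_{i,j}$ by functoriality of $C^*(-)$, and then assemble these with a cyclic shift into an automorphism of the direct sum. Your version is in fact more explicit than the paper's---you write down the inverse and check bijectivity directly, whereas the paper simply remarks that continuity is inherited coordinatewise---but the underlying idea is the same.
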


\begin{proof}
    From Theorem \ref{thm: rho isom}, $(\widehat{f}_{i,j})_\ract: \Omega_1^{w_{i+1}}\to\Omega_1^{w_i}$ is an isomorphism of $k_1$-graphs. Therefore, by \cite[Corollary 3.5]{kp},  
    it induces a $*$-isomorphism $\rho_{i,j}: C^*(\Omega_1^{w_{i+1}})\to C^*(\Omega_1^{w_i})$.

    That $\{\rho_{i,j}\}_{0\leq i \leq n-1}$ defines the stated automorphism $\widehat \rho_j$ of $\bigoplus_i C^*(\Omega_1^{w_i})$ hinges primarily on the fact that $\|\rho_j(a)\| = \|\rho_{i,j}(a_i)\|$ for some $0\leq i \leq n-1$. Thus, continuity is inherited directly from the continuity of the isomorphisms $\rho_{i,j}$.
\end{proof}

Recall from Proposition \ref{dis-union-prop} that $\Omega_1 = \bigsqcup_{i=0}^{n-1} \Omega_1^{w_i} \times \{ w_i\}$. Thus,  $C^*(\Omega_1)\cong \bigoplus_{i} C^*(\Omega_1^{w_i})$. Moreover, since $\Omega_1 = d^{-1}(\N^{k_1})$, it is straightforward to check that $C^*(\Omega_1)$ is a sub-$C^*$-algebra of $C^*(\Omega)$.

Our primary result of this section (Theorem \ref{thm:circle-xprod}) shows that if $\Omega $ is a quasi-product of $\Lambda$ and $C_{n,k_2}$  and $|\Lambda^0|<\infty$, the automorphism $\rho_j$ defined above is induced on elements of $C^*(\Omega_1)$ via conjugation by
\begin{equation} U_j = \sum_{v\in \Lambda^0} \sum_{i=0}^{n-1} S_{(v,f_{i,j})}^*.
\label{eq:Uj}
\end{equation}

\begin{thm}
    Let $\Omega$ be a quasi-product with quasi-factors $\Lambda$ and $C_{n,k_2}$. If $\Lambda^0$ is finite then
    \[ C^*(\Omega) \cong \left( \bigoplus_{i=0}^{n-1} C^*(\Omega_1^{w_i}) \right)\rtimes_{\rho} \Z^{k_2}  \cong \left(\bigoplus_{i=1}^n C^*(\Lambda) \right) \rtimes_\rho \Z^{k_2}\]
    with $\rho_x = \rho_{1}^{x_1}\cdots\rho_{k_2}^{x_{k_2}}$.
    \label{thm:circle-xprod}
\end{thm}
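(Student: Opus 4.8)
The plan is to build an explicit isomorphism $C^*(\Omega) \to \left(\bigoplus_{i=0}^{n-1} C^*(\Omega_1^{w_i})\right) \rtimes_\rho \Z^{k_2}$ via the universal property, by producing a Cuntz--Krieger $\Omega$-family inside the crossed product. First I would set $A := \bigoplus_{i=0}^{n-1} C^*(\Omega_1^{w_i})$ and note (using the remark after Lemma \ref{lem: aut oplus}) that $C^*(\Omega_1) \cong A$ sits inside $C^*(\Omega)$, carrying the generators $S_{(v,w_i)}$ and $S_{(e,w_i)}$ for $e \in G(\Lambda)^1$. Since $C_{n,k_2}$ is a stable quasi-factor (Lemma \ref{lem: stab-guarantee}), each $(\widehat f_{i,j})_\ract$ is a $k_1$-graph isomorphism $\Omega_1^{w_{i+1}} \to \Omega_1^{w_i}$ inducing $\rho_{i,j}$, and by Lemma \ref{lem: aut oplus} these assemble into $\rho_j \in \Aut(A)$; because the cycles of different colors in $C_{n,k_2}$ commute in $\Omega_2$ and the $\ract$-action respects composition (Theorem \ref{thm: rho isom}), the $\rho_j$ pairwise commute, so $\rho: \Z^{k_2} \to \Aut(A)$, $\rho_x = \rho_1^{x_1}\cdots\rho_{k_2}^{x_{k_2}}$, is a well-defined action.

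Next I would verify that the elements $U_j$ of \eqref{eq:Uj} are unitaries in the multiplier algebra of $C^*(\Omega_1) \subseteq C^*(\Omega)$ that implement $\rho_j$ by conjugation. Row-finiteness and source-freeness give $\sum_{v,i} S_{(v,f_{i,j})}S_{(v,f_{i,j})}^* = \sum_{v,i} S_{(v,w_{i+1})}$ by (CK4) applied to the color $e_{k_1+j}$ at each vertex $(v,w_{i+1})$, and $\sum_{v,i} S_{(v,f_{i,j})}^* S_{(v,f_{i,j})} = \sum_{v,i} S_{(v,w_i)}$ by (CK3); finiteness of $\Lambda^0$ makes both sums equal to $\sum_{v,i} S_{(v,w_i)} = 1_{C^*(\Omega)}$ when $\Omega^0$ is finite (and a strictly convergent multiplier sum otherwise), so $U_j$ is unitary. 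That $U_j S_{(p,w_{i+1})} U_j^* = S_{\widehat{f}_{i,j}{}_\ract(p)\text{-representative}}$ follows from the factorization rule $f_{i,j}$-commuting-square identities, i.e. from $\omega_1\omega_2 = (\omega_1\lact\omega_2)(\omega_1\ract\omega_2)$ together with stability ($p_\lact(f_{i,j}) = f_{i,j}$, using that the unique edge of each color into $w_{i+1}$ is $f_{i,j}$), which gives $S_{(f_{i,j}^{-1}\text{-target})}\cdots$; concretely one checks $S_{(v,f_{i,j})}^* S_{(p,w_{i+1})} S_{(v',f_{i,j})} = S_{(q_\ract(p),w_i)}$ on the appropriate corners and sums over $v,v',i$. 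Hence conjugation by $U_j$ on $C^*(\Omega_1) \cong A$ is exactly $\rho_j$. The $U_j$ also pairwise commute (again because the cycles in $C_{n,k_2}$ of distinct colors commute, so the relevant products of generators agree), so $x \mapsto U^x := U_1^{x_1}\cdots U_{k_2}^{x_{k_2}}$ is a unitary representation of $\Z^{k_2}$ with $U^x a (U^x)^* = \rho_x(a)$.

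By the universal property of the crossed product, the pair $(\,\mathrm{incl}: A \hookrightarrow C^*(\Omega),\ U\,)$ induces a $*$-homomorphism $\Phi: A \rtimes_\rho \Z^{k_2} \to C^*(\Omega)$. Surjectivity: $C^*(\Omega)$ is densely spanned by $\{S_\mu S_\nu^*\}$; using the matched-pair factorization $\Omega = \Omega_1\bowtie\Omega_2$, every such element is a product of something in $C^*(\Omega_1) = \mathrm{im}(A)$ with finitely many $S_{(v,f_{i,j})}$'s and their adjoints, hence lies in $\mathrm{im}(\Phi)$. Injectivity: I would use the gauge-invariant uniqueness theorem for $C^*(\Omega)$ (or equivalently the dual-$\T^{k_2}$-action on the crossed product together with faithfulness of $\Phi$ on the fixed-point algebra $A$, which holds because $\Phi|_A$ is the inclusion $C^*(\Omega_1)\hookrightarrow C^*(\Omega)$ and is injective). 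The second isomorphism in the statement is then immediate from $\Omega_1^{w_i} \cong \Lambda$ for all $i$ (Theorem \ref{thm: rho isom}, $C_{n,k_2}$ connected) and from $\eqref{eq:CK-relns}$-functoriality transporting $\rho$ to the corresponding action on $\bigoplus_{i=1}^n C^*(\Lambda)$. The main obstacle I anticipate is the bookkeeping in showing $U_j$ is a genuine (multiplier) unitary and that conjugation by it reproduces $\rho_j$ on all of $C^*(\Omega_1)$ rather than just on vertex projections — this requires care with the $f_{i,j}$-commuting squares and the stability identity $p_\lact(f_{i,j}) = f_{i,j}$, and, if one wants the non-finite-$\Lambda^0$ case, strict-topology convergence arguments; the finiteness hypothesis on $\Lambda^0$ is exactly what lets $U_j$ live in $C^*(\Omega)$ itself.
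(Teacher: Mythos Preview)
Your setup and the construction of $\Phi: A \rtimes_\rho \Z^{k_2} \to C^*(\Omega)$ match the paper exactly: the paper also verifies that the $U_j$ are commuting unitaries in $C^*(\Omega)$ (using finiteness of $\Lambda^0$ so that $\sum_{v,i} S_{(v,w_i)} = 1$), that conjugation by $U_j$ implements $\rho_j$ on $C^*(\Omega_1)$, and that $\Phi$ is surjective because $S_{(v,f_{i,j})} = S_{(v,w_{i+1})} U_j^*$ lies in the image. The divergence is in the injectivity argument. The paper does precisely what your opening sentence promises but your body never carries out: it builds a Cuntz--Krieger $\Omega$-family $\{T_\lambda\}$ inside the crossed product (setting $T_{(e,w_i)} = S_{(e,w_i)}$ and $T_{(v,f_{i,j})} = V_j^* S_{(v,w_i)}$), checks (CK1)--(CK4) directly, and obtains $\Psi: C^*(\Omega) \to A \rtimes_\rho \Z^{k_2}$ with $\Psi\Phi = \id$. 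Your alternative---the dual $\T^{k_2}$-action on the crossed product together with faithfulness of $\Phi|_A$---is a legitimate and arguably cleaner route: equivariance of $\Phi$ for $\hat\rho$ and the (conjugate of the) restricted gauge $\T^{k_2}$-action on $C^*(\Omega)$, together with faithfulness of the canonical expectation onto $A$ (amenability of $\Z^{k_2}$), forces $\ker\Phi = 0$. One caution: invoking ``the gauge-invariant uniqueness theorem for $C^*(\Omega)$'' here points the wrong way, since GIUT controls injectivity of maps \emph{out of} a $k$-graph algebra and would apply to $\Psi$, not to $\Phi$; it is your parenthetical dual-action argument that actually does the work.
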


\begin{proof}
    Firstly, we will demonstrate that
  $\{ U_j\}_{1\leq j \leq k_2}$
    are a commuting family of unitaries, that is, a unitary representation of $\Z^{k_2}$. For this claim, we rely on the Cuntz-Krieger relations. 
    To see that $U_j U_\ell = U_\ell U_j,$  recall that $S^*_{(v,f_{m,j})}S^*_{(u,f_{i,\ell})}= \delta_{v,u} \delta_{m,i-1} S^*_{(v,f_{i,j}f_{i-1,\ell})}$. Since $f_{i,j}f_{i-1,\ell}=f_{i,\ell}f_{i-1,j}\in C_{n,k_2}$,
    \begin{align*}
        U_jU_\ell &= \left(\sum_{v\in \Lambda^0} \sum_{m=0}^{n-1} S_{(v,f_{m,j})}^*\right)\left(\sum_{u\in \Lambda^0} \sum_{i=0}^{n-1} S^*_{(u,f_{i,\ell})}\right) \\
        & = \sum_{v\in \Lambda} \sum_{i=0}^{n-1} S^*_{(v,f_{i,j}f_{i-1,\ell})}  = \sum_{v\in \Lambda} \sum_{i=0}^{n-1} S^*_{(v,f_{i,\ell}f_{i-1,j})}\\
        & = U_\ell U_j.
    \end{align*}
 
    To see that $U_j$ is a unitary, we observe that $S_{(v,f_{i-1,j})}S_{(v,f_{m-1,j})}^* 
    = \delta_{i,m}S_{(v,w_i)} = S_{(v,f_{i,j})}^* S_{(v,f_{m,j})}$. 
    Furthermore, $|\Lambda^0| < \infty \implies |\Omega^0| < \infty$; the universality of $C^*(\Omega)$ therefore implies that $1 = \sum_{v\in \Lambda^0} \sum_{i=0}^{n-1} S_{v,w_i}$. Consequently, 
    \begin{align*}
        U_jU_j^* &= \left(\sum_{v\in \Lambda^0} \sum_{i=0}^{n-1} S^*_{(v,f_{i,j})}\right)\left(\sum_{u\in \Lambda^0} \sum_{m=0}^{n-1} S_{(u,f_{m,j})}\right) \\
        &= \sum_{v\in \Lambda} \sum_{i=0}^{n-1} S^*_{(v,f_{i,j})}S_{(v,f_{i,j})} = \sum_{v\in \Lambda} \sum_{i=0}^{n-1} S_{v,w_i} \\
        &= 1 = \left(\sum_{v\in \Lambda^0} \sum_{i=0}^{n-1} S_{(v,f_{i,j})}\right)\left(\sum_{u\in \Lambda^0} \sum_{m=0}^{n-1} S_{(u,f_{m,j})}^*\right)\\
        &=U_j^*U_j.
    \end{align*}
    We conclude that $\{U_j\}$ is a unitary representation of $\Z^{k_2}$ contained in $C^*(\Omega)$.

  To see that conjugation by $U_j$ will induce the action of $\rho_{j}$, fix $\lambda \in \Omega_1^{w_m} (\cong \Lambda ) .$ By Lemma \ref{k-graph-mp-lem},
   \[(\lambda, w_m) (s(\lambda), f_{m-1, j})  = (r(\lambda),f_{m-1,j})((\widehat{f}_{m-1,j})_\ract(\lambda),w_{m-1})\]
  since $C_{n, k_2}$ is a stable quasi-factor.  (CK2) now implies that 
    \begin{align*}
        U_j S_{(\lambda, w_m)}U_j^*&=\left(\sum_{i=0}^{n-1}\sum_{v\in \Lambda^0} S_{(v,f_{i,j})}^*\right) S_{(\lambda, w_m)} \left(\sum_{\ell =0}^{n-1}\sum_{u\in \Lambda^0} S_{(u,f_{\ell,j})}\right)\\
        &= S_{(r(\lambda),f_{m-1,j})}^*S_{(\lambda,w_m)}\left(\sum_{\ell=0}^{n-1}\sum_{u\in \Lambda^0} S_{(u,f_{\ell,j})}\right)\\
        & = S_{(r(\lambda),f_{m-1,j})}^*S_{(\lambda,w_m)}S_{(s(\lambda),f_{m-1,j})}\\
        &= S_{(r(\lambda), f_{m-1,j})}^* S_{(r(\lambda), f_{m-1,j})}S_{((\widehat{f}_{m-1,j})_\ract(\lambda), w_m)}\\
        & 
        =S_{((\widehat{f}_{m-1,j})_\ract(\lambda), w_m)} =  \rho_{m-1, j}(S_{(\lambda, w_m)}).
	\end{align*}

As each $ \rho_{i,j}$ is a $*$-homomorphism, and $C^*(\Omega_1^{w_m})$ is generated by $\{ S_{(\lambda, w_m)}\}_\lambda$, we conclude that the automorphism $\rho_j$ of $\bigoplus_{i=0}^{n-1} C^*(\Omega_1^{w_i})$ is indeed given by conjugation by $U_j$.

    This demonstrates that $\{U_j\}_{j=1}^{k_2}$ yields a unitary representation of $\Z^{k_2}$ which induces $\rho$ on $C^*(\Omega_1) \cong \bigoplus_{i=0}^{n-1} C^*(\Omega_1^{w_i})$ via conjugation. The universal property of the crossed product now gives a $*$-homomorphism 
    \[ \Phi: \left( \bigoplus_{i=0}^{n-1} C^*(\Omega_1^{w_i}) \right)\rtimes_{\rho} \Z^{k_2} \to C^*(\Omega)\]
    with image  $\langle C^*(\Omega_1),U_j\rangle$.  
   It remains to show that $\Phi$ is bijective.

    We demonstrate surjectivity 
    by observing first that each vertex projection $S_{(v, w_i)}$ lies in  
    $C^*(\Omega_1)$ and therefore $S_{(v,w_{i+1})}U_j^*=S_{(v,f_{i,j})} \in \text{Im}\, \Phi$ for all $v,i,j$. In particular, every generator of $C^*(\Omega_2)$ lies in $\text{Im}\, \Phi$,
    making the image all of $C^*(\Omega)$.

    To demonstrate injectivity, we 
    construct a Cuntz--Krieger $\Omega$-family inside of $ \left( \bigoplus_i C^*(\Omega_1^{w_i})\right) \rtimes_{ \rho} \Z^{k_2}$, such that the associated $*$-homomorphism 
    $\Psi: C^*(\Omega) \to \left( \bigoplus_i C^*(\Omega_1^{w_i})\right) \rtimes_{ \rho} \Z^{k_2}$ satisfies $\Psi\Phi=\operatorname{id}$.
      To that end, write $V_j$ for the canonical generators of $\Z^{k_2}$ in the crossed product, and for each  $e \in G(\Lambda)^1, f_{i,j} \in C_{n, k_2}^1$ and each vertex $v \in \Lambda^0, 
      w_i \in C_{n,k_2}^0,$  define $T_{(e, w_i)} , T_{(v, w_i)}, T_{(v, f_{i,j})}\in  \left( \bigoplus_i C^*(\Omega_1^{w_i})\right) \rtimes_{ \rho} \Z^{k_2} $ by 
      \[ T_{(e, w_i)} = S_{(e, w_i)},   
      \quad T_{(v, w_i)} = S_{(v, w_i)} , \quad  T_{(v,f_{i,j})} = V_{j}^*T_{(v,w_i)}.\]

We now show that  $\{ T_\lambda: \lambda \in \Omega^E \cup \Omega^0\}$ is a Cuntz--Krieger $\Omega$-family within  $\left( \bigoplus_i C^*(\Omega_1^{w_i})\right) \rtimes_{ \rho} \Z^{k_2} $. To that end, 
observe first that 
   (CK1) is ensured since
   the vertex projections $S_{(v,w_i)}$ in $\bigoplus_i C^*(\Omega_1^{w_i})$ are orthogonal.

Before checking (CK2), we observe that for any vertex projection, 
\[ V_j S_{(v,w_i)} V_j^* =  \rho_j(S_{(v,w_i)}) =  \rho_{i-1, j}(S_{(v, w_i)}) = S_{(v, w_{i-1})}.\]
Moreover, the unitaries $V_j$ commute, since they represent the Abelian group $\Z^{k_2}$.  Consequently, 
\begin{align*}
    T_{(v, f_{i,j} )}T_{(v, f_{i-1, \ell})} &= V_j^* S_{(v, w_i)} V_\ell^* S_{(v, w_{i-1})} = S_{(v, w_{i+1})} V_j^* V_\ell^* S_{(v, w_{i-1})}  \\
    &= S_{(v, w_{i+1})} V_\ell^* V_j^* S_{(v, w_{i-1})}= V_\ell ^* S_{(v, w_i)} V_j^* S_{(v, w_{i-1})} \\
    & = T_{(v, f_{i, \ell})} T_{(v, f_{i-1, j})}.
\end{align*}
That is, (CK2) holds for the generators $T_{(v, f_{i,j})}$ associated to $C_{n, k_2}.$
 We know that (CK2) holds among the generators $T_{(e, w_i)}$ since (CK2) holds for $\bigoplus_i C^*(\Omega_1^{w_i}) \cong \bigoplus_i C^*(\Lambda)$ by construction.  To see that $T_{(e, w_{i+1})} T_{(s(e), f_{i, j})} = T_{(r(e), f_{i, j})} T_{((\widehat{f}_{i,j})_\ract(e), w_i)}$,  observe that 
 \begin{align*}
T_{(r(e), f_{i,j})} T_{((\widehat{f}_{i,j})_\ract(e), w_i)}&= V_j^* S_{(r(e), w_i)} S_{((\widehat{f}_{i,j})_\ract(e), w_i)} = V_j^* S_{((\widehat{f}_{i,j})_\ract(e), w_i)}  \\
\text{ while  } \quad T_{(e, w_{i+1})} T_{(s(e), f_{i,j})} & = S_{(e, w_{i+1})} V_j^* S_{(s(e), w_i)} \\
& = S_{(e, w_{i+1})} S_{(s(e), w_{i+1})} V_j^* = S_{(e, w_{i+1})} V_j^*.
 \end{align*}
As the unitaries $V_j$ implement the action $ \rho_{i,j}$ which is induced by $(\widehat{f}_{i,j})_\ract,$ we have $V_j S_{(e, w_{i+1})} V_j^* = S_{((\widehat{f}_{i,j})_\ract(e), w_i)}$; that is, (CK2) holds.

  Observe that (CK3) and (CK4) are satisfied for $\{ T_{(e, w_i)}\}$ since they hold in $C^*(\Omega^{w_i}_1)$.
 Checking (CK3) for $T_{(v, f_{i,j})}$ amounts to observing that each $V_j$ is a unitary:
 \[ T_{(v, f_{i,j})}^* T_{(v, f_{i,j})} = T_{(v, w_i)}^* V_j  V_{j}^*T_{(v,w_i)} = T_{(v, w_i)} = T_{s(v, f_{i,j})}. \]
  For (CK4), recall that for each $i,j$ there is only one edge  in $\Omega_2$ with range $(v, w_{i+1})$ and degree $e_{j+k_1},$ so 
  \[ \sum_{e \in (v, w_{i+1})\Omega^{e_{j+k_1}}} T_e T_e^* = T_{(v, f_{i,j})} T_{(v, f_{i,j})}^* 
  = V_j^* T_{(v, w_i)} V_j^* = T_{(v, w_{i+1})}.\]
    We conclude that $\{ T_e: e \in G(\Omega)^1\}$ is a Cuntz--Krieger $\Omega$-family. Thus,  the universal property of $C^*(\Omega)$ yields a $*$-homomorphism $\Psi:C^*(\Omega)\to   \left( \bigoplus C^*(\Omega_1^{w_i}) \right)\rtimes_{\rho} \Z^{k_2}$ satisfying $\Psi(S_e) = T_e$. In particular, 
     as the sum of all vertex projections is the identity operator, 
     it can be easily checked that $\Psi \circ \Phi = \operatorname{id}$ on the generators of the crossed product. Hence, $\Phi$ is an isomorphism as claimed.
\end{proof}

We close this section by drawing attention back to Example \ref{ex: bouq}. If we call the 1-graph given by the black edges $\Lambda$ and the 2-graph given by the blue  and red edges $\Gamma = C_{1,2}$, then the 2-graphs in Example \ref{ex: bouq} are all  quasi-products $\Omega_i$ with quasi-factors $\Lambda$ and $C_{1,2}$. From Theorem \ref{thm:circle-xprod}, we obtain the isomorphism $C^*(\Omega) \cong C^*(\Lambda)\rtimes_{\rho} \Z^2 = \mathcal O_3 \rtimes_\rho\Z^2$. 
If the factorization rule on $\Omega$ is given by $\sim_1$, 
$\rho=\id$, whereas in the case of  $\sim_2$ we have $e_{\ract}(f_1)=f_2$ and $g_{\ract}(f_1)=f_2$, i.e., $\rho_{(1,0)}(s_1) = s_2=\rho_{(0,1)}(s_1)$.  

\section{Stability and products}
\label{sec:qp=is-product}
In this section, we use the framework of stable quasi-products to characterize when a $k$-graph is isomorphic to a product graph. As we discussed in the introduction, in a product graph $\Omega = \Gamma \times \Lambda,$ both factors are stable.  However, Example \ref{ex: path loops} below shows that a $k$-graph may be isomorphic to a product graph without   being stable.  With Example \ref{ex: path loops} in mind, we define {\em relaxed stable}  quasi-products (see Theorem \ref{thm:relaxed stable}).  Theorem \ref{thm:product-isom} uses this concept to characterize when  a quasi-product $k$-graph is isomorphic to a product graph.

\subsection{Motivating examples}
\label{sec:examples-5}
\begin{example}
\label{ex: path loops}

Let  $\Lambda$ be the 1-graph 
\[ \Lambda = \stackrel{\cdot}{\scriptstyle w_0} \stackrel{\longleftarrow}{\scriptstyle  e_0} \stackrel{\cdot}{\scriptstyle  w_1}\stackrel{\longleftarrow}{\scriptstyle e_1} \stackrel{\cdot}{\scriptstyle  w_2} \cdots \]
and $\Gamma $  the 1-graph with one vertex $v$ and two edges $f, g$.  The one-skeleton of the product is as follows:

\begin{center}
    \scalebox{.8}{
	\begin{tikzpicture}[xscale=2]
            
            \foreach \x in {0,...,5}{
                \node[] (v) at (\x,0){$\scriptstyle{(v,w_\x)}$};
                \draw[<-] (v) to[] node[inner sep=3pt,black,above]{$\scriptstyle{( e_\x, v)}$} (\x+.75,0);
                \draw[->,dashed] (v) to[in=75,out=105,looseness=25] node[inner sep=3pt,black,above]{$\scriptstyle{(w_\x, f)}$} (v);
                \draw[->,dashed] (v) to[in=-75,out=-105,looseness=25] node[inner sep=3pt,black,below]{$\scriptstyle{(w_\x, g)}$} (v);
            }
            \node[] at (6,0) {$\cdots$};
            \end{tikzpicture}
        }
    
\end{center}
Thanks to Lemma \ref{lem: stab-guarantee}, $\Lambda$ is always a (left) stable quasi-factor. 
Define the quasi-product $\Omega$ by setting
\[ (e_{2i}, v)( w_{2i+1}, f)=( w_{2i}, f)( e_{2i}, v) , \qquad (e_{2i+1}, v) (w_{2i+2}, f)=(w_{2i+1}, g)( e_{2i+1}, v) . \]
Then $\Gamma$ is not a stable quasi-factor, and in particular, $\Omega \not= \Lambda \times \Gamma$.  However, there is an isomorphism $\psi: \Omega \to \Lambda \times \Gamma$: we have $\psi(e_n, v) = (e_n, v)$ for all $n$, 

$\psi(w_j, f)=  (w_j, g)$ when $j= 0,1 \pmod 4$ and $\psi (w_j, f)= (w_j, f)$ otherwise.

To see that $\psi$ is indeed an isomorphism of 2-graphs, we must check that it preserves all commuting squares. First, if $i = 2m$, 
\begin{align*}
  \psi( w_{2i}, f) \psi( e_{2i}, v) &= ( w_{2i}, g) (e_{2i}, v) \sim_\times ( e_{2i}, v) ( w_{2i+1}, g) = \psi( e_{2i}, v) \psi(w_{2i+1}, f),
\end{align*}
that is, $\psi$ preserves the commuting square $( w_{2i}, f)( e_{2i}, v) \sim_\Omega ( e_{2i}, v)(w_{2i+1}, f).$  Similarly, 
\begin{align*} \psi( w_{2i+1},f) \psi( e_{2i+1}, v)& = (w_{2i+1}, g) (e_{2i+1},v) \sim_\times ( e_{2i+1}, v) (w_{2i+2}, g) \\
& = \psi( e_{2i+1}, v) \psi( w_{2i+2}, g),
\end{align*}
so $\psi$ respects the commuting square $(w_{2i+1}, f)( e_{2i+1}, v) = ( e_{2i+1}, v) (w_{2i+2}, g).$

On the other hand, if $i = 2m +1$ is odd, then 
\[ \psi( w_{2i}, f) \psi( e_{2i}, v) = ( w_{2i}, f) ( e_{2i}, v) \sim_\times ( e_{2i}, v) (w_{2i+1}, f) = \psi(e_{2i}, v) \psi( w_{2i+1}, f),\]
so $\psi$ again preserves the commuting square $( w_{2i}, f)(e_{2i}, v) \sim_\Omega (e_{2i}, v)(w_{2i+1}, f);$   and since $2i+2 = 0 \pmod 4,$ 
\begin{align*} 
\psi(w_{2i+1}, f) \psi(e_{2i+1}, v)& = ( w_{2i+1}, f) ( e_{2i+1}, v) \sim_\times ( e_{2i+1}, v) (w_{2i+2}, f)\\
& = \psi( e_{2i+1}, v) \psi( w_{2i+2}, g),
\end{align*}
so $\psi$ respects the commuting square $( w_{2i+1}, f)(e_{2i+1}, v) = ( e_{2i+1}, v) (w_{2i+2}, g).$
\end{example}

 Example \ref{ex: path loops}  might seem to suggest that whenever the quasi-factor $\Lambda$ of a quasi-product $\Omega$ is guaranteed to be stable, we have an isomorphism $\Omega \cong \Lambda \times \Gamma.$ The following example shows that this is not the case.

\begin{example}
\label{ex: undirected counter}

Consider the one-skeleton of a product graph below.

\begin{center}
    
\begin{tikzpicture}[x=1in,y=1in]
    \node[] (u) at (0,0) {$(u,x)$};
    \node[] (v) at (1,.5) {$(v,x)$};
    \node[] (w) at (1,-.5) {$(w,x)$};
    \node[] (t) at (2,0) {$(t,x)$};

    \draw[<-] (u) to[out=120, in=160, looseness=5] node[above, inner sep=1pt] {$\scriptstyle{(u,f)}$} (u);
    \draw[->] (u) to[out=240, in=200, looseness=5] node[below, inner sep=2pt] {$\scriptstyle{(u,g)}$} (u);

    \draw[<-] (t) to[out=180-120, in=180-160, looseness=5] node[above, inner sep=1pt] {$\scriptstyle{(t,f)}$} (t);
    \draw[->] (t) to[out=180-240, in=180-200, looseness=5] node[below, inner sep=2pt] {$\scriptstyle{(t,g)}$} (t);

    \draw[<-] (v) to[out=110, in=70, looseness=5] node[above, inner sep=1pt] {$\scriptstyle{(v,f)}$} (v);
    \draw[->] (v) to[out=-110, in=-70, looseness=5] node[below, inner sep=2pt] {$\scriptstyle{(v,g)}$} (v);

    \draw[<-] (w) to[out=110, in=70, looseness=5] node[above, inner sep=1pt] {$\scriptstyle{(w,f)}$} (w);
    \draw[->] (w) to[out=-110, in=-70, looseness=5] node[below, inner sep=2pt] {$\scriptstyle{(w,g)}$} (w);

    \draw[->, dashed] (u) to[out=26.57, in=206.57] node[above, inner sep=3pt] {$\scriptstyle{(vu,x)}$} (v);
    \draw[->, dashed] (u) to[out=-26.57, in=153.43] node[below, inner sep=3pt] {$\scriptstyle{(wu,x)}$} (w);
    \draw[->, dashed] (t) to[out=180-26.57, in=180-206.57] node[right, inner sep=3pt] {$\scriptstyle{(vt,x)}$} (v);
    \draw[->, dashed] (t) to[out=180+26.57, in=180-153.43] node[right, inner sep=3pt] {$\scriptstyle{(wt,x)}$} (w);

\end{tikzpicture}
\end{center}

Here, $\Lambda$ is the 1-graph corresponding to the dashed edges, and $\Gamma$ is once again the 1-graph with one vertex $x$ and two loops $f, g$.

Once again, the left factor $\Lambda$ is stable by Lemma \ref{lem: stab-guarantee}.  There are (at least) two ways to construct a non-stable quasi-product with this 1-skeleton; one of them is isomorphic to $\Lambda \times \Gamma$ and the other is not.

We define the first quasi-product $\Omega_1$ by defining the following commuting squares.  Notice that this data uniquely determines the remaining commuting squares in $\Omega_1$; as we only have two colors of edges, Remark \ref{rmk:commuting-squares} tells us that $\Omega_1$ is completely determined by specifying its commuting squares.
\begin{center}
    \begin{tikzpicture}[x=2cm, y=2cm]
    \node[] (tu) at (0,0) {$(u,x)$};
    \node[] (tv) at (1,0) {$(v,x)$};
    \node[] (tt) at (2,0) {$(t,x)$};
    \node[] (tw) at (3,0) {$(w,x)$};
    \node[] (tu2) at (4,0) {$(u,x)$};

    \node[] (bu) at (0,-1) {$(u,x)$};
    \node[] (bv) at (1,-1) {$(v,x)$};
    \node[] (bt) at (2,-1) {$(t,x)$};
    \node[] (bw) at (3,-1) {$(w,x)$};
    \node[] (bu2) at (4,-1) {$(u,x)$};

    \draw[->, dashed] (tu) to[in=180,out=0] (tv);
    \draw[<-, dashed] (tv) to[in=180,out=0] (tt);
    \draw[->, dashed] (tt) to[in=180,out=0] (tw);
    \draw[<-, dashed] (tw) to[in=180,out=0] (tu2);

    \draw[->, dashed] (bu) to[in=180,out=0] (bv);
    \draw[<-, dashed] (bv) to[in=180,out=0] (bt);
    \draw[->, dashed] (bt) to[in=180,out=0] (bw);
    \draw[<-, dashed] (bw) to[in=180,out=0] (bu2);

    \draw[->] (tu) to[out=270,in=90] node[right] {$\scriptstyle{(u,f)}$} (bu);
    \draw[->] (tv) to[out=270,in=90] node[right] {$\scriptstyle{(v,f)}$} (bv);
    \draw[->] (tt) to[out=270,in=90] node[right] {$\scriptstyle{(t,f)}$} (bt); 
    \draw[->] (tw) to[out=270,in=90] node[right] {$\scriptstyle{(w,g)}$} (bw);
    \draw[->] (tu2) to[out=270,in=90] node[right] {$\scriptstyle{(u,g)}$} (bu2);
\end{tikzpicture}
\end{center}

If we had an isomorphism $\varphi: \Omega_1 \to \Lambda \times \Gamma$ with $\varphi(u,g) = (u,g)$, in order for $\varphi$ to preserve the commutation relation $\sim_1$ indicated in the diagram above, we would have to have  $\varphi(u,f) = (u,g),$ contradicting the injectivity of $\varphi$.  Similarly, if $\varphi(u, g) = (u, f),$ the fact that $\varphi: \Omega_1 \to \Lambda \times \Gamma$ must respect the commuting squares forces $\varphi(u, f) = (u, f)$.  That is, no  isomorphism $\varphi: \Omega_1 \to \Lambda \times \Gamma$ exists.

On the other hand, we obtain a different 2-graph $\Omega_2$ by specifying the following commuting squares: 

\begin{center}
    \begin{tikzpicture}[x=2cm, y=2cm]
    \node[] (tu) at (0,0) {$(u,x)$};
    \node[] (tv) at (1,0) {$(v,x)$};
    \node[] (tt) at (2,0) {$(t,x)$};
    \node[] (tw) at (3,0) {$(w,x)$};
    \node[] (tu2) at (4,0) {$(u,x)$};

    \node[] (bu) at (0,-1) {$(u,x)$};
    \node[] (bv) at (1,-1) {$(v,x)$};
    \node[] (bt) at (2,-1) {$(t,x)$};
    \node[] (bw) at (3,-1) {$(w,x)$};
    \node[] (bu2) at (4,-1) {$(u,x)$};

    \draw[->, dashed] (tu) to[in=180,out=0] (tv);
    \draw[<-, dashed] (tv) to[in=180,out=0] (tt);
    \draw[->, dashed] (tt) to[in=180,out=0] (tw);
    \draw[<-, dashed] (tw) to[in=180,out=0] (tu2);

    \draw[->, dashed] (bu) to[in=180,out=0] (bv);
    \draw[<-, dashed] (bv) to[in=180,out=0] (bt);
    \draw[->, dashed] (bt) to[in=180,out=0] (bw);
    \draw[<-, dashed] (bw) to[in=180,out=0] (bu2);

    \draw[->] (tu) to[out=270,in=90] node[right] {$\scriptstyle{(u,f)}$} (bu);
    \draw[->] (tv) to[out=270,in=90] node[right] {$\scriptstyle{(v,f)}$} (bv);
    \draw[->] (tt) to[out=270,in=90] node[right] {$\scriptstyle{(t,f)}$} (bt); 
    \draw[->] (tw) to[out=270,in=90] node[right] {$\scriptstyle{(w,g)}$} (bw);
    \draw[->] (tu2) to[out=270,in=90] node[right] {$\scriptstyle{(u,f)}$} (bu2);
\end{tikzpicture}

\end{center}

Although the rightmost square above shows that $\Omega_2 \not = \Lambda \times \Gamma$, setting 
\[ \varphi(u, f) = (u, g); \quad \varphi(v, f) = (v, g); \quad \varphi(t,f) = (t, g); \quad \varphi(w, f) = (w, f)\]
yields an isomorphism of 2-graphs $\varphi: \Omega_2 \to \Lambda \times \Gamma$.
(The proof is analogous to the proof that $\psi$ from Example \ref{ex: path loops} is an isomorphism.)
\end{example}

\subsection{Relaxed stability and products}
\label{sec:relaxed-stability}

The key difference between Example \ref{ex: path loops} and Example \ref{ex: undirected counter} lies in the structure of the undirected graph underlying $\Lambda$. In both cases, there is at most one path in $\Lambda$ between any two vertices, and so one might hope that the isomorphisms implemented by $\lact$ between the various components of $\Omega_2 = \bigsqcup_{v \in \Lambda^0} \Omega_2^x$ could be untwisted to yield an isomorphism $\Omega \cong \Lambda \times \Gamma$. Essentially,  this is not possible in $\Omega_1$ from Example \ref{ex: undirected counter} because there are two {\em undirected} paths $p, \tilde p$ between $t, w$ which give rise to different isomorphisms $\Omega_2^t \cong \Omega_2^w$.

Thus, in order to describe when a quasi-product is ``stable in disguise'' or {\em relaxed stable}, we must first pause to study the underlying undirected graph of a $k$-graph (Definition \ref{def: bi-direct}).  Then, Theorem  \ref{thm:relaxed stable} characterizes which quasi-products are isomorphic to stable quasi-products.  Finally, we use this perspective to identify, in Theorem \ref{thm:product-isom}, precisely when a quasi-product is isomorphic to a product $k$-graph (and hence when its $C^*$-algebra has a tensor product decomposition).

\begin{dfn}
\label{def: bi-direct}
   Given a directed graph $G$, define the graph $\widehat G$ by

\begin{align*}
    \widehat G^0 &= G^0 & \widehat G^1&= \{ e_+, e_-: e \in G^1\} \\
    \hat s(e_+) = &s(e)=\hat r(e_-) & \hat r(e_+) &= r(e) = \hat s(e_-) 
\end{align*}
\end{dfn} 
 
If we suppose that $e_{\lact}$ is bijective for all $e\in G(\Lambda)$, 
we can extend $\lact$ to give us an isomorphism (which we will denote $\xi_\lact$) for any path $\xi \in \widehat G(\Lambda).$  Namely, if $\xi = e^1_\pm e^2_\pm \cdots e^n_\pm,$
\[ \xi_\lact := (e^1_\lact)^\pm \circ (e^2_\lact)^\pm \circ \cdots \circ (e^n_\lact)^\pm.\]

If $G$ is connected, then 
for any two vertices $v, w \in G^0,$ there exists a path $\xi \in v \widehat G^* w.$
In particular, $\Lambda$ connected implies that  for any $v \in \Lambda^0$ there exists a path $\xi^v \in \widehat G(\Lambda)^*$ with $r(\xi^v) =v$ and $s(\xi^v) = x_0$, where $\Gamma \cong \Omega_2^{x_0}$. 
Thus, if $\Omega$ is a quasiproduct for which $\Lambda$ is connected and $\xi^v_\lact$ is bijective for each $\xi \in \widehat G(\Lambda), v \in \Gamma^0,$ then $\lact$ implements an 
isomorphism $\Gamma \cong \Omega_2^v .$   
If moreover this isomorphism is independent of the choice of path in $\widehat G(\Lambda)$ linking $v$ and $x_0$, then the following Theorem establishes that we can patch together these isomorphisms to yield a global isomorphism of $\Omega$ with a stable quasiproduct $ \widetilde \Omega .$

\begin{thm}
\label{thm:relaxed stable}
Let $\Omega$ be a quasi-product with quasi-factors $\Lambda$ and $\Gamma$.  Assume that $\Lambda$ is connected. 
Suppose that for each $p \in G(\Lambda),$ $\widehat p_\lact $ is bijective, and 
that for any $\xi,\xi'\in \widehat{G}(\Lambda)$, $\widehat \xi_\lact=\widehat \xi'_\lact$ whenever $r(\xi)=r(\xi')$ and $s(\xi)=s(\xi')$. Then there is a $k$-graph isomorphism $\Theta: \Omega \cong \widetilde \Omega$, where $\widetilde \Omega$ is a stable quasi-product with quasi-factor $\Lambda$ and stable quasi-factor $\Gamma$.  
\end{thm}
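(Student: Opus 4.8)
The goal is to build a $k$-graph isomorphism $\Theta\colon\Omega\to\widetilde\Omega$ onto a stable quasi-product.  The rough idea: fix a basepoint $x_0\in\Gamma^0$ with $\Omega_2^{x_0}\cong\Gamma$, and use the hypothesis on $\widehat\xi_\lact$ to transport each fiber $\Omega_2^v$ back to $\Gamma$ in a well-defined way, then ``untwist'' the whole $k$-graph along these identifications.  First I would make precise what $\widetilde\Omega$ should be: as a one-skeleton it has the same vertices $\Lambda^0\times\Gamma^0$ and edges $G(\Lambda)^1\times\Gamma^0\sqcup\Lambda^0\times G(\Gamma)^1$ as $G(\Omega)$, and I will define a new factorization rule $\sim_{\widetilde\Omega}$ on $G(\Omega)^*$ that makes $\Lambda\times\{v\}\hookrightarrow\widetilde\Omega$ a sub-$k$-graph and makes $\Gamma$ stable.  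Concretely, for each $v\in\Lambda^0$ pick (using connectedness of $\Lambda$) a path $\xi^v\in\widehat G(\Lambda)^*$ from $x_0$ to $v$; by the hypothesis the isomorphism $\theta_v:=\widehat{\xi^v}_\lact\colon\Gamma=\Omega_2^{x_0}\to\Omega_2^v$ of $k_2$-graphs does not depend on the chosen path, and $\theta_{x_0}=\id$.

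**Constructing $\Theta$ on the one-skeleton and checking it is a functor.**  Define $\Theta$ to be the identity on vertices; on edges of the form $(x,f)$ with $f\in G(\Gamma)^1$ set $\Theta(x,f)=(x,\theta_x(f))$ (interpreting $\theta_x$ on the level of edges, which makes sense since $\theta_x$ is a colored-graph isomorphism), and on edges $(e,v)$ with $e\in G(\Lambda)^1$ I would define $\Theta(e,v)=(e,v)$ — but this is the subtle point, because the isomorphisms $\theta_v$ on the fibers interact with $\ract$.  The cleaner route is to build $\Theta$ as a map of path categories $G(\Omega)^*\to G(\Omega)^*$ and check it descends.  Since $\Lambda$ is connected, every vertex $v\in\Lambda^0$ is reached by a canonical undirected path; the consistency hypothesis $\widehat\xi_\lact=\widehat{\xi'}_\lact$ is exactly what is needed for $\Theta$ to be well-defined independent of choices.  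Then I would verify $\Theta$ preserves $s,r,d$ by construction, and is a bijection on $G(\Omega)^*$ because each $\theta_x$ is.

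**Checking $\Theta$ respects the factorization rules.**  This is the technical heart.  I would use the description (from Lemma \ref{k-graph-mp-lem} and the discussion after it, and from Theorem \ref{thm: rho isom}) that the factorization rule of a quasi-product is determined by the factorization rules on the fibers $\Omega_1^w$, $\Omega_2^x$ together with the Zappa--Sz\'ep actions on edges.  Concretely one must check: (a) on the ``$\Gamma$-type'' commuting squares $(x,h)(x,\ell)\sim_\Omega(x,\ell')(x,h')$, $\Theta$ transports these to the corresponding squares in $\Omega_2^x$ and hence — since $\theta_x$ is a $k_2$-graph isomorphism — to valid squares of $\widetilde\Omega$; (b) on the mixed commuting squares $(e,r(h))(s(e),h)\sim_\Omega(r(e),e\lact h)((e\ract h),s(h))$, the map $\Theta$ sends this to a square in which the new $\lact$-action is trivial, because $\theta_{r(e)}$ and $\theta_{s(e)}$ differ by $\widehat{e_\lact}^{\pm1}$, which is precisely $e_\lact$ (or its inverse); (c) on the ``$\Lambda$-type'' squares inside each fiber $\Omega_1^w$, $\Theta$ acts through the fiber isomorphisms built from $\ract$, which by Theorem \ref{thm: rho isom} are $k_1$-graph isomorphisms, so these squares are preserved.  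Having checked that $\Theta$ preserves all commuting squares and is degree-, source- and range-preserving, Theorem \ref{thm:kgraph} (via (KG0) and (KG4)) guarantees that the image factorization rule $\sim_{\widetilde\Omega}:=\Theta(\sim_\Omega)$ is a genuine factorization rule, so $\widetilde\Omega:=G(\Omega)^*/\sim_{\widetilde\Omega}$ is a $(k_1+k_2)$-graph and $\Theta\colon\Omega\to\widetilde\Omega$ is an isomorphism.  Finally, by construction the mixed squares of $\widetilde\Omega$ have $e\lact g=g$ for all edges, so Proposition \ref{prop:edges-enough} shows $\Gamma$ is a stable quasi-factor of $\widetilde\Omega$, and the embeddings $\psi_i$ for $\widetilde\Omega$ come from composing those of $\Omega$ with $\Theta$; the identifications $\Omega_2^{x_0}\cong\Gamma$ and $\Omega_1^{w_0}\cong\Lambda$ persist.

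**Main obstacle.**  The delicate step is (b): showing that after conjugating by the fiber isomorphisms $\theta_x$, the $\lact$-action on edges really does become trivial in $\widetilde\Omega$ while the $\ract$-action (and hence the fibered structure of $\Omega_1$) is transported consistently rather than being broken.  This is where the path-independence hypothesis $\widehat\xi_\lact=\widehat{\xi'}_\lact$ is essential: without it, the two ways of reaching $r(e)$ from $x_0$ — directly, versus via $s(e)$ and the edge $e$ — would give different $\theta_{r(e)}$, and $\Theta$ would fail to be well-defined.  I also expect some care is needed to confirm that the new rule $\sim_{\widetilde\Omega}$ satisfies (KG0), i.e.\ is compatible with composition of longer paths; but this follows formally once (KG4) is known, since $\Theta$ is a bijection of path categories intertwining the original $\sim_\Omega$ (which satisfies (KG0)) with $\sim_{\widetilde\Omega}$.
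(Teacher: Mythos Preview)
Your overall strategy—untwist via a bijection $\Theta$ of the path category built from fiber identifications $\theta_v = \widehat{\xi^v}_\lact$, then push forward the factorization rule—is essentially the paper's approach, repackaged. The paper instead constructs $\widetilde\Omega$ from explicit Zappa--Sz\'ep data $(\widetilde\lact,\widetilde\ract)$ and verifies the Mundey--Sims criteria (K1)--(K5) from \cite{matched-pair}, but the isomorphism $\Theta$ it writes down (Equation \eqref{eq:Theta}) is your map, up to the direction issue below. Your transport-of-structure packaging is in one respect cleaner: once $\Theta$ is a source-, range-, and degree-preserving bijection of path categories, both (KG0) and (KG4) for $\sim_{\widetilde\Omega}$ follow formally from those for $\sim_\Omega$, so you never need to prove separately that $\theta_v$ is a $k_2$-graph isomorphism (the paper does need this, as Lemma \ref{lem:xi}, to verify (K2)).

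There is, however, a genuine error in the direction of $\Theta$: you must take $\Theta(x,f)=(x,\theta_x^{-1}(f))$, not $(x,\theta_x(f))$. The relation you correctly identify in (b), namely $e_\lact = \theta_{r(e)}\circ\theta_{s(e)}^{-1}$, means that under your $\Theta$ the new left action on an edge $g=\theta_{s(e)}(h)$ is
\[
\theta_{r(e)}\bigl(e_\lact(\theta_{s(e)}^{-1}(g))\bigr) \;=\; \theta_{r(e)}^2\circ\theta_{s(e)}^{-2}(g),
\]
which need not equal $g$; with the inverse one gets $\theta_{r(e)}^{-1}\circ e_\lact\circ\theta_{s(e)} = \id$, and then Proposition \ref{prop:edges-enough} yields stability of $\Gamma$ in $\widetilde\Omega$. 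Your hedged ``$\pm 1$'' and ``or its inverse'' in step (b) is exactly where this slips through unchecked. A smaller point: step (c) is muddled. Since $\Theta$ is the identity on $\Lambda$-type edges, the $\Lambda$-type squares are fixed outright, and no appeal to $\ract$ or to Theorem \ref{thm: rho isom} is needed—indeed that theorem is unavailable here, as its hypothesis is the very stability you are trying to produce.
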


In the setting of the Theorem, we say that $\Gamma$ is a {\em relaxed stable} quasi-factor of $\Omega$. 

We begin with a lemma.
\begin{lem}
    Under the hypotheses of Theorem \ref{thm:relaxed stable}, for any $v, w \in \Lambda^0$ we obtain a canonical isomorphism of $k_2$-graphs $\Omega_2^v \to \Omega_2^w$.  This isomorphism is implemented by $\widehat \xi_\lact$ for any $\xi \in v \widehat G(\Lambda)^* w$.
    \label{lem:xi}
\end{lem}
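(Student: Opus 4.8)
The plan is to build the canonical isomorphism one edge at a time. First I would show that for each edge $e\in G(\Lambda)^1$ the prescription $[q]_{s(e)}\mapsto[e_\lact(q)]_{r(e)}$ is a well-defined isomorphism of $k_2$-graphs $\widehat e_\lact\colon\Omega_2^{s(e)}\to\Omega_2^{r(e)}$; then, since each $\widehat e_\lact$ is invertible, for a path $\xi=e^1_\pm\cdots e^n_\pm$ in $\widehat G(\Lambda)^*$ I would define $\widehat\xi_\lact=(\widehat{e^1}_\lact)^\pm\circ\cdots\circ(\widehat{e^n}_\lact)^\pm$ and invoke connectedness of $\Lambda$ together with the endpoint-independence hypothesis to see that this depends only on the endpoints of $\xi$.

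For the first step, well-definedness is immediate: since $\lact$ is defined on $k$-graph morphisms (Lemma \ref{k-graph-mp-lem}, Definition \ref{def:path-action}), the element $([e],r(q))\lact(s(e),[q]_{s(e)})=(r(e),[e_\lact(q)]_{r(e)})$ depends only on the class $[q]_{s(e)}$ (note $r(q)$ is an invariant of that class), so $q\sim q'$ in $\Omega_2^{s(e)}$ forces $e_\lact(q)\sim e_\lact(q')$ in $\Omega_2^{r(e)}$. That $\widehat e_\lact$ preserves source, range, and degree follows from Proposition \ref{prop:source-range-general} and Lemma \ref{k-graph-mp-lem}, and bijectivity is part of the hypothesis (applied to $p=e$). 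The only delicate point is that $\widehat e_\lact$ is multiplicative. Here I would use Proposition \ref{prop:lact-composition}: for composable $q^1,q^2\in G(\Gamma)^*$,
\[ e_\lact(q^1q^2)=\bigl(e_\lact(q^1)\bigr)\bigl((q^1_\ract(e))_\lact(q^2)\bigr)\]
as paths in $G(\Gamma)^*$. Since $d(q^1_\ract(e))=d(e)$ is a standard basis vector, $q^1_\ract(e)$ is a single edge, and by Proposition \ref{prop:source-range-general} it has the same source and range as $e$. Viewing $e$ and $q^1_\ract(e)$ as length-one paths in $\widehat G(\Lambda)^*$ with matching endpoints, the endpoint-independence hypothesis gives $\widehat{q^1_\ract(e)}_\lact=\widehat e_\lact$, so $(q^1_\ract(e))_\lact(q^2)\sim e_\lact(q^2)$ in $\Omega_2^{r(e)}$; passing to classes and using (KG0) yields $\widehat e_\lact([q^1q^2])=\widehat e_\lact([q^1])\,\widehat e_\lact([q^2])$. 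Hence $\widehat e_\lact$ is a $k_2$-graph isomorphism, and so is $(\widehat e_\lact)^{-1}$.

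The remaining step is routine bookkeeping: for $\xi=e^1_\pm\cdots e^n_\pm\in\widehat G(\Lambda)^*$, the source/range conventions of Definition \ref{def: bi-direct} make $\widehat\xi_\lact$ a composite of isomorphisms and hence a $k_2$-graph isomorphism $\Omega_2^{s(\xi)}\to\Omega_2^{r(\xi)}$ (agreeing, when $\xi$ is directed, with the $\widehat p_\lact$ already in play, by the left-action identity used in the proof of Proposition \ref{prop:edges-enough}). Given $v,w\in\Lambda^0$, connectedness of $\Lambda$ supplies a path $\xi$ in $\widehat G(\Lambda)^*$ from $v$ to $w$, and the endpoint-independence hypothesis says $\widehat\xi_\lact\colon\Omega_2^v\to\Omega_2^w$ does not depend on which such $\xi$ we choose; this is the asserted canonical isomorphism. (Taking $\xi$ trivial shows it is the identity when $v=w$, and concatenating paths shows these isomorphisms compose, which is exactly what Theorem \ref{thm:relaxed stable} will use.) I expect the main obstacle to be the multiplicativity of $\widehat e_\lact$: because $p_\lact$ need not respect composition in general (Example \ref{ex:rho-non-comp}), it is essential to use endpoint-independence precisely to replace the twisted factor $q^1_\ract(e)$ from Proposition \ref{prop:lact-composition} by $e$ itself.
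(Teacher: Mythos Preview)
Your proposal is correct and follows essentially the same argument as the paper's proof: the crux in both is the multiplicativity of $\widehat p_\lact$, established by observing that the twisted factor $q^1_\ract(p)$ (your $q^1_\ract(e)$) has the same source and range as $p$, so the endpoint-independence hypothesis identifies its action with that of $p$. The only cosmetic difference is that the paper verifies multiplicativity for an arbitrary directed path $p\in G(\Lambda)^*$ in one pass and then composes (with inverses) to handle general $\xi\in\widehat G(\Lambda)^*$, whereas you work edge-by-edge first; the left-action identity you cite makes these two routes equivalent.
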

\begin{proof}
As $\widehat p_\lact$ is always well defined and degree-, source-, and range-preserving;  is bijective and depends only on $ r(p), s(p)$ by hypothesis; and $\Lambda$ is connected,
  the only thing  to check   is that $\widehat p_\lact $ respects composition in $\Omega_2^{s(p)}$.  To that end, suppose that $q = gh \in G(\Gamma)^*$ and $p=e^1_+ e^2_+\cdots e^n_+$. Recall from Proposition \ref{prop:source-range-general} that $\ell := g_\ract(p) \in G(\Lambda)^*$ has the same source and range as $p$, so $\widehat p_\lact = \widehat \ell_\lact$ by hypothesis.  Using (MP2), we see that 
  \begin{align*}  (r(p), [p_\lact(gh)]) &=  ([p], r(g)) \lact (s(p), [gh]) = ([p], r(g)) \lact \left( (s(p), [g])(s(p),[h])\right) \\
  &= (r(p), [p_\lact(g)]) ([\ell], r(h))\lact (s(\ell), [h]) \\
  & = (r(p), [p_\lact(g)]) (r(p), [\ell_\lact(h)]) = (r(p), [p_\lact(g) p_\lact(h)]);
  \end{align*}
  that is, $\widehat p_\lact$ implements an isomorphism $\Omega_2^{s(p)} \to \Omega_2^{r(p)}.$ For a general $\xi\in v\widehat{G}(\Lambda)^*w$, write $\xi=p^1_\pm\cdots p^n_\pm$ for $p^1, \ldots, p^n \in G(\Lambda)^*.$ Then $\widehat \xi_\lact = (\widehat p^1_\lact)^\pm \circ \cdots \circ (\widehat p^n_\lact)^\pm$ is  given by a  composition of $k_2$-graph isomorphisms, which is canonical in the sense that it only depends on $s(\xi), r(\xi)$. 
\end{proof}

To construct the $k$-graph $\widetilde \Omega$ of  Theorem \ref{thm:relaxed stable}, we will follow \cite[Lemma 3.28]{matched-pair}.  That is, we set $\widetilde \Omega_1 = \bigsqcup_{x \in \Gamma^0} \Omega_1^x \times \{x\}, \ \widetilde \Omega_2 = \bigsqcup_{v \in \Lambda^0} \{ v\} \times \Gamma$ as $k_i$-graphs.  With this information, \cite[Lemma 3.28]{matched-pair} tells us that to obtain a matched pair $(\widetilde \Omega_1, \widetilde \Omega_2, \widetilde \lact, \widetilde \ract)$ (and therefore a quasi-product $\widetilde \Omega$), it suffices to define $\widetilde \lact, \widetilde \ract$ and check that for any edges $e \in G(\Lambda)^1, g \in G(\Gamma)^1,$

the following conditions hold:

\begin{enumerate}
    \item[(K1)] 
    $s(e_{\tilde \lact}(g)) = s(g)$ and $r(g_{\tilde \ract}(e)) = r(e)$.
    \item[(K2)] 
    The right action $\widetilde \ract$ satisfies (MP3) for edges.  That is, for any $(e_1, e_2)\in G(\Lambda)^1 * G(\Lambda)^1$ and any $g \in G(\Gamma)^1$,
    \begin{align*}
       ([e_1 e_2], r(g))  & \widetilde \ract (s(e_2), g)\\
       & \stackrel{!}{=} (e_1, r(g)) \widetilde \ract \left( (e_2, r(g)) \widetilde \lact (s(e_2), g)\right) \left( ( e_2, r(g)) \widetilde \ract (s(e_2), g)\right) \\
       &= \left( ((e_2)_{\tilde \lact}(g))_{\tilde \ract}(e_1), s(g)) (g_{\tilde \ract}(e_2), s(g)\right)\\
       & = \left( \left[ ((e_2)_{\tilde \lact}(g))_{\tilde \ract}(e_1) g_{\tilde \ract}(e_2)\right], s(g) \right).
    \end{align*}
    \item[(K3)] The left action $\widetilde \lact$ satisfies (MP2) for edges.  That is, for any edges $e \in G(\Lambda)^1$ and $(g_1, g_2) \in G(\Gamma)^1 * G(\Gamma)^1,$
    \begin{align*}
        (e, r(g_1)) \widetilde \lact (s(e), [g_1g_2]) \stackrel{!}{=} (e, r(g_1)) \widetilde \lact(s(e), g_1) \left( (e, r(g_1))\widetilde \ract(s(e), g_1)\right)\widetilde \lact (s(e), g_2).
    \end{align*}
        
    \item[(K4)] We have $d(e_{\tilde \lact}(g))  = d(g)$ and $d(g_{\tilde \ract}(e)) = d(e)$.
    \item[(K5)] For any $h \in G(\Lambda)^1, f \in G(\Gamma)^1$, there are unique $e \in G(\Lambda)^1, g \in G(\Gamma)^1$ satisfying $e_{\tilde \lact}(g) = f, g_{\tilde \ract}(e) = h.$
  \end{enumerate}

\begin{proof}[Proof of Theorem \ref{thm:relaxed stable}]
   We begin by defining the actions $\widetilde \lact, \widetilde \ract$. 
   If $p\in G(\Lambda)^*, q \in G(\Gamma)^*$, we define
$ ([p], r(q))\widetilde \lact(s(p), [q]) := (r(p), [q]) $, that is, $ p_{\tilde \lact }(q) = q,$
   and $$([p], r(q)) \widetilde \ract (s(p), [q]) = ([(\xi^{s(p)}_\lact(q))_\ract(p)], s(q)), \quad \text{i.e.,} \quad q_{\tilde \ract}(p) =(\xi^{s(p)}_\lact(q))_\ract(p). $$
Equivalently, $([p], r(q)) \widetilde \ract(s(p), [q]) = ([p], r(q)) \ract(s(p), [\xi^{s(p)}_\lact(q)]).$ 

As $p_{\tilde \lact} = \id$, it is evident that $\widetilde \lact$ is a left action.  The fact  that $\widetilde \ract$ is a right action follows from Lemma \ref{lem:xi}.  To wit, suppose $q= q^1 q^2 \in G(\Gamma)^*$.  Then Lemma \ref{lem:xi} implies that for any $v \in \Lambda^0$, 
$ \xi^{v}_\lact(q^1 q^2) = \xi^{v}_\lact(q^1) \xi^{v}_\lact(q^2).$
Moreover, since $\ract$ is a right action, 
\begin{align*}  ([p], r(q)) \ract (s(p), [q^1 q^2]) & = \left(([p], r(q))\ract(s(p), [q^1])\right) \ract(s(p), [q^2]) \\
& = ( [q^1_\ract(p)], s(q^1)) \ract(s(p), [q^2]) = ([q^2_\ract(q^1_\ract(p))], s(q^2)). 
\end{align*}
That is, $(q^1 q^2)_\ract(p) = q^2_\ract(q^1_\ract(p)).$ 
Consequently, 
\begin{align*} (q^1 q^2)_{\tilde \ract}(p) & = (\xi^{s(p)}_\lact(q^1 q^2))_\ract(p) = (\xi^{s(p)}_\lact(q^1) \xi^{s(p)}_\lact(q^2))_\ract(p)\\
&= (\xi^{s(p)}_\lact(q^2))_\ract\left( (\xi^{s(p)}_\lact(q^1))_\ract(p)\right) = q^2_{\tilde \ract}(q^1_{\tilde \ract}(p)).
\end{align*}
We conclude that 
\begin{align*} 
([p], r(q))\widetilde \ract(s(p), [q^1 q^2]) & = 
([q^2_{\tilde \ract}(q^1_{\tilde \ract}(p))], s(q))= ([q^1_{\tilde \ract}(p)], s(q^1)) \widetilde \ract( s(p), [q^2]) \\
& = \left( ([p], r(q^1))\widetilde \ract (s(p), [q^1])\right) \widetilde \ract (s(p), [q^2]).
\end{align*}
That is, $\widetilde \ract$ is a right action as claimed.
   
Furthermore,    as $ 
    \ract$ preserves range and source, we will have $s(q_{\tilde \ract}(p)) = s(p)$ and $r(q_{\tilde \ract}(p)) = r(p).$

    We now check that $\widetilde \lact, \widetilde \ract $ satisfy (K1)-(K5). As $f_\lact, h_\ract$ preserve source and range for any $f\in G(\Lambda)^1, h\in G(\Gamma)^1,$ (K1) holds.  For (K2), suppose $e_1, e_2 \in G(\Lambda)^1$ satisfy $s(e_1) = r(e_2)$.  
    We need to show that $$([e_1e_2], r(g)) \widetilde \ract (s(e_2), g) \stackrel{!}{=} (e_1, r(g))\widetilde \ract \left((e_2, r(g)) \widetilde \lact(s(e_2), g)\right) \left( (e_2, r(g)) \widetilde \ract(s(e_2), g)\right).$$
   
    Observe first that $((e_2)_\lact ( \xi^{s(e_2)}_\lact(g))) = \xi^{s(e_1)}_\lact(g)$, since $r(e_2 ) = s(e_1)$ and $\lact$ is a left action (respects composition). 
    Thus, since $\ract$ satisfies (MP3), 
    \begin{align*}  ([e_1 e_2]& , r(g))  \widetilde \ract(s(e_2), g) 
     = ([e_1 e_2], r(g)) \ract(s(e_2), \xi^{s(e_2)}_\lact(g)) 
    \\
    & = \left( (e_1, r(g)) \ract ((e_2, r(g)) \lact(s(e_2), \xi^{s(e_2)}_\lact(g)) \right) (e_2, r(g)) \ract(s(e_2), \xi^{s(e_2)}_\lact(g))\\
    &=\left( (e_1, r(g)) \ract( s(e_1), \xi^{s(e_1)}_\lact(g)) \right) (g_{\tilde \ract}(e_2), s(g))\\
    &= (g_{\tilde \ract}(e_1), s(g))(g_{\tilde \ract}(e_2), s(g)).
    \end{align*}
On the other hand, since $\widetilde \lact$ is the identity, we also have 
\begin{align*}
    (e_1, r(g)) & \widetilde \ract\left( (e_2, r(g)) \widetilde \lact (s(e_2), g)\right)\left( (e_2, r(g))\widetilde \ract(s(e_2), g)\right) \\
    & =  \left((e_1, r(g))\widetilde \ract(r(e_2), g) \right) ((\xi^{s(e_2)}_\lact(g))_\ract(e_2), s(g)) \\
    &= (g_{\tilde \ract}(e_1), s(g))(g_{\tilde \ract}(e_2), s(g)).
\end{align*}
That is, (K2) holds.     
    (K3) holds since the left action $\widetilde \lact$ is always the identity, and (K4) holds since it holds for $ 
    \ract$. 
    
    For (K5), 
    given $g \in G(\Gamma)^1, h \in G(\Lambda)^1$, we need to show that there exist unique $g' \in G(\Gamma)^1, e \in G(\Lambda)^1$ so that $e_{\tilde \lact}(g') = g$ and $g'_{\tilde \ract}(e) = h.$ The first condition forces $g' = g$; as $\widetilde \ract$ preserves source, range, and degree, the candidates for $e$ are the edges in $S:= r(h) \Lambda^{d(h)} s(h).$
Thus, every edge $e\in S$ yields the same $\tilde g : = \xi^{s(e)}_\lact(g).$  In fact, our hypothesis that $p_\lact$ only depends on $s(p), r(p)$ implies that $e_\lact(\tilde g) =: \hat g$ is independent of the choice of $e\in S.$  

In other words, for any $e \in S$, when we use the factorization property 
to write $(e, r(g))(s(e), \tilde g) = (r(e), e_\lact(\tilde g))( \tilde g_\ract(e), s(g))$, the edges $(s(e), \tilde g)$ and $(r(e), e_\lact(\tilde g))$ are independent of the choice of $e \in S.$  Consequently, the factorization property yields a bijection between $\{ (e, r(g)): e \in S\}$ and $\{ (\tilde g_\ract(e), s(g)): e \in S\}.$ 
Observe furthermore that 
\[ \tilde g_\ract(e) = (\xi^{s(e)}_\lact(g))_\ract(e) = g_{\tilde \ract}(e)\]
has the same source,  range and degree as $e$ (and hence $h$).  In other words, every edge  in $S$ appears as $ \tilde g_\ract(e) = g_{\tilde \ract}(e)$ for exactly one $e \in S.$  Since $h \in S$, this proves that given $h \in G(\Lambda)^1, g \in G(\Gamma)^1$, there are unique $g' \in G(\Gamma)^1, e \in G(\Lambda)^1$ with $g = e_{\tilde \lact}(g'), h =g'_{\tilde \ract}(e). $  That is, (K5) holds and $\widetilde \Omega$ is a $k$-graph.

    Now, define $\Theta: \Omega \to \widetilde \Omega$ by 
    \begin{equation} 
    \Theta([p], [q]) = ([p], [(\xi^{s(p)})^{-1}_\lact(q)]).
    \label{eq:Theta}
    \end{equation}
    We claim that $\Theta$ is an isomorphism of $k$-graphs. The hypothesis that $\widehat \ell_\lact$ is bijective implies that $\Theta$ is. 
     Since $\widetilde \Omega_1 = \Omega_1$ by construction, and we know that $\lact$ is range-, degree-, and source-preserving, and is multiplicative by Lemma \ref{lem:xi},  $\Theta$ is a homomorphism when restricted to $\Omega_1$ or $\Omega_2.$

    To see that $\Theta$ is a $k$-graph homomorphism, we recall the product structure in $\Omega:$ 
    \begin{align*} ([p], [q]) &  = ([p], r(q)) (s(p), [q]) = \left( ([p], r(q)) \lact (s(p), [q])\right) \left( ([p], r(q))\ract (s(p), [q]) \right) \\
    &= (r(p), [p_\lact(q)]) ([q_\ract(p)], s(q)).
    \end{align*}
    Thus,  
    to see that $\Theta$ is a $k$-graph homomorphism it suffices to check that 
    \[ \Theta(r(p), [p_\lact(q)]) =(r(p), [(\xi^{r(p)})_\lact^{-1}(p_\lact(q))])  \]
equals 
\begin{align*}  \Theta([p], r(q)) \widetilde \lact \Theta(s(p), [q])) 
& = ([p], r(q)) \widetilde \lact (s(p), [(\xi^{s(p)})_\lact^{-1}(q)]) = (r(p), [(\xi^{s(p)})_\lact^{-1}(q)])
\end{align*}
and that $\Theta([q_\ract(p)], s(q)) = ([q_\ract(p)], s(q))$ equals 
\begin{align*}  \Theta([p], r(q))\widetilde \ract \Theta(s(p), [q])) & = ([p], r(q)) \widetilde \ract (s(p), [(\xi^{s(p)})_\lact^{-1}(q)]) \\
& = ([\left( \xi^{s(p)}_\lact(\xi^{s(p)})_\lact^{-1}(q)\right)_\ract(p)], s(q)) = ([q_\ract(p)], s(q)).
\end{align*}
    That is, the second equality holds because $\lact$ is a left action.

    The first equality holds thanks to our hypothesis that the function $\widehat \ell_\lact$ is bijective and completely determined by $s(\ell), r(\ell)$.  Because of this,  $(\xi^{r(p)})^{-1}_\lact \circ p_\lact = (\xi^{s(p)})^{-1}_\lact$, which yields the first equality. That is, $\Theta$ is a bijective $k$-graph homomorphism. As $\widetilde \Omega$ is stable,  this concludes the proof. 

\end{proof}

\begin{remark} 
We compute the isomorphism $\Theta$  of Theorem \ref{thm:relaxed stable} in the setting of Example \ref{ex: path loops}.  
Recall from Equation \eqref{eq:Theta} that $\Theta([p], [q]) = ([p], [(\xi^{s(p)})_\lact^{-1}(q)]).$

For each vertex $w_i \in \Lambda^0,$ let $\xi^{w_i} =: \xi^i = (e_0 \cdots e_{i-1})^{-1}$, so $(\xi^i)^{-1}_\lact = (e_{0})_\lact \circ (e_1)_\lact  \circ \cdots \circ (e_{i-1})_\lact.$  The factorization rule in $\Omega$ implies that $(e_j)_\lact $ is nontrivial (that is, interchanges $f$ and $g$) precisely when $j$ is odd.  In other words, if $i = 0, 1 \pmod 4$, then $(\xi^{i})^{-1}_\lact$ is the composition of (several identity maps with) an even number of maps which interchange $f$ and $g$, and consequently $(\xi^{i})^{-1}_\lact = id.$  On the other hand, if $i = 2,3 \pmod 4,$ then $\xi^{i}_\lact $ is the composition of an odd number of nontrivial isomorphisms (interspersed with identity maps), and so $\xi^{i}_\lact $ interchanges $f$ and $g$.   We conclude that for $\eta = f, g,$
\[ \Theta(w_i, \eta) = \psi(w_i, \eta); \]
that is, the isomorphism  
of Theorem \ref{thm:relaxed stable} is precisely the isomorphism $\psi$ of Example \ref{ex: path loops}.

\end{remark}

As we establish in the following Theorem, relaxed stability yields a complete characterization of when a quasi-product is isomorphic to a product graph (and hence has a tensor-product $C^*$-algebra).

\begin{thm}
\label{thm:product-isom}
    Let $\Omega$ be a quasi-product with connected quasi-factors $\Lambda$ and $\Gamma$. We have $\Omega \cong \Lambda \times \Gamma$ iff  $\Lambda, \Gamma$ are both relaxed stable quasi-factors of $\Omega$.
\end{thm}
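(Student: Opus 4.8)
The plan is to prove the two implications separately. For ``only if'' I would analyze an isomorphism $\Omega\to\Lambda\times\Gamma$ directly; for ``if'' I would invoke Theorem~\ref{thm:relaxed stable} and its left-stable analogue (Remark~\ref{rmk:right-stability}) twice in succession.

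\textbf{The ``only if'' direction.} Suppose $\Phi\colon\Omega\to\Lambda\times\Gamma$ is a $k$-graph isomorphism. Since $\Phi$ preserves degree it carries the (unique) matched-pair decomposition of $\Omega$ onto that of $\Lambda\times\Gamma$: it restricts to $k_i$-graph isomorphisms $\Omega_i\to(\Lambda\times\Gamma)_i$ intertwining $\lact,\ract$ with the Zappa--Sz\'ep actions of $\Lambda\times\Gamma$, which are trivial. Because $\Lambda$ and $\Gamma$ are connected, by Proposition~\ref{dis-union-prop} the connected components of $G(\Omega_1)$ are precisely the subgraphs $G(\Lambda)\times\{x\}$, $x\in\Gamma^0$, and similarly for $G(\Omega_2)$ and for the one-skeletons of $(\Lambda\times\Gamma)_i$; as $\Phi$ permutes connected components one deduces that $\Phi^0$ is a product map $\nu\times\mu$ for bijections $\nu\colon\Lambda^0\to\Lambda^0$ and $\mu\colon\Gamma^0\to\Gamma^0$, and that $\Phi$ breaks up into families of $k_i$-graph isomorphisms $\theta_x\colon\Omega_1^x\to\Lambda$ (all with vertex map $\nu$) and $\phi_v\colon\Omega_2^v\to\Gamma$ (all with vertex map $\mu$). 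Applying $\Phi$ to $([p],r(q))\lact(s(p),[q])=(r(p),[p_\lact(q)])$ and using that $\lact$ is trivial on $\Lambda\times\Gamma$ then yields, after deleting constant coordinates, $\widehat p_\lact=\phi_{r(p)}^{-1}\circ\phi_{s(p)}$ for every $p\in G(\Lambda)^*$. This is a composition of $k_2$-graph isomorphisms, hence a bijection, and it depends only on $s(p)$ and $r(p)$; telescoping over the edges of a path $\xi\in\widehat G(\Lambda)^*$ gives $\widehat\xi_\lact=\phi_{r(\xi)}^{-1}\circ\phi_{s(\xi)}$, so $\Gamma$ is a relaxed stable quasi-factor of $\Omega$. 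The mirror-image computation (using connectedness of $\Gamma$) gives $\widehat q_\ract=\theta_{s(q)}^{-1}\circ\theta_{r(q)}$, so $\Lambda$ is relaxed stable as well.

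\textbf{The ``if'' direction.} Assume $\Lambda$ and $\Gamma$ are both relaxed stable quasi-factors of $\Omega$. Theorem~\ref{thm:relaxed stable} applied to $\Gamma$ yields an isomorphism $\Theta\colon\Omega\to\widetilde\Omega$ onto a stable quasi-product $\widetilde\Omega$ (so $\widetilde\lact=\id$) with quasi-factor $\Lambda$ and stable quasi-factor $\Gamma$, for which $q_{\widetilde\ract}(p)=(\xi^{s(p)}_\lact(q))_\ract(p)$ as in that proof. First I would check that $\Lambda$ is still relaxed stable in $\widetilde\Omega$: because $\xi^{s(p)}_\lact$ is source-, range-, and color-order-preserving, $\xi^{s(p)}_\lact(g)$ has the same source, range and degree as any edge $g\in G(\Gamma)^1$, so relaxed stability of $\Lambda$ in $\Omega$ makes $\widehat g_{\widetilde\ract}$ a source-, range- and degree-preserving bijection depending only on $s(g)$ and $r(g)$, and hence the same is true of $\widehat\zeta_{\widetilde\ract}$ for every $\zeta\in\widehat G(\Gamma)^*$. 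Now apply the left-stable analogue of Theorem~\ref{thm:relaxed stable} (Remark~\ref{rmk:right-stability}) to $\widetilde\Omega$, obtaining $\Xi\colon\widetilde\Omega\to\widetilde{\widetilde\Omega}$ with $\widetilde{\widetilde\ract}=\id$ and with the new left action obtained by twisting $\widetilde\lact$; since $\widetilde\lact$ is already the identity, any such twist is again the identity, so $\widetilde{\widetilde\lact}=\id$ too. Thus $\widetilde{\widetilde\Omega}$ is a quasi-product with quasi-factors $\Lambda,\Gamma$ in which both Zappa--Sz\'ep actions are trivial and $\widetilde{\widetilde\Omega}_1=\bigsqcup_{x\in\Gamma^0}\Lambda\times\{x\}$, $\widetilde{\widetilde\Omega}_2=\bigsqcup_{v\in\Lambda^0}\{v\}\times\Gamma$; comparing the induced edge-level factorization with $\sim_\times$ (Example~\ref{ex:products}) and recalling (from the discussion after Lemma~\ref{k-graph-mp-lem}) that the factorization rule of a quasi-product is determined by the factorization rules on the quasi-factors together with the actions on edges, we conclude $\widetilde{\widetilde\Omega}=\Lambda\times\Gamma$. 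Hence $\Omega\cong\widetilde\Omega\cong\widetilde{\widetilde\Omega}=\Lambda\times\Gamma$.

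\textbf{Main obstacle.} The delicate part is the ``if'' direction, where I must keep straight three a priori different factorization rules on $G(\Gamma)^*$ -- those of $\Omega_2^{s(p)}$, of $\Omega_2^{r(p)}$, and of $\Gamma$ itself -- and verify both that the first untwisting $\Theta$ preserves the relaxed stability of $\Lambda$ and that the second untwisting $\Xi$ does not reinstate any nontriviality in the left action; both points reduce to the explicit formulas for the twisted actions in the proof of Theorem~\ref{thm:relaxed stable}. In the ``only if'' direction the only subtlety is establishing that $\Phi^0$ is a product map, which is exactly where connectedness of \emph{both} quasi-factors is used, and which is what allows the edge isomorphisms $\phi_v$ to be indexed by $\Lambda^0$ alone.
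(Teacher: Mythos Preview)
Your proposal is correct and follows essentially the same route as the paper. In the ``only if'' direction your formula $\widehat p_\lact=\phi_{r(p)}^{-1}\circ\phi_{s(p)}$ is exactly the paper's $\widehat p_\lact=\theta_v^{-1}\tau_{\tilde v,\tilde w}\theta_w$ in different notation, and your telescoping argument replaces the paper's short induction on the length of $\xi$. In the ``if'' direction the paper simply says ``apply Theorem~\ref{thm:relaxed stable} twice''; you carry out the same plan but supply the two verifications the paper leaves implicit, namely that relaxed stability of $\Lambda$ survives the first untwisting (your Check~1 reduces to $\widehat g_{\widetilde\ract}=\widehat g_\ract$ via the source/range-preservation of $\xi^{s(p)}_\lact$) and that the second untwisting cannot reintroduce a nontrivial left action (immediate from $\widetilde\lact=\id$). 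Both checks are sound.
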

\begin{proof}
    If $\Lambda, \Gamma$ are both relaxed stable quasi-factors of $\Omega$, then applying Theorem \ref{thm:relaxed stable} twice (once for each factor) yields the desired isomorphism.  On the other hand, if $\theta: \Omega \to \Lambda \times \Gamma$ is an isomorphism, we need to show that for any $p \in \widehat G(\Lambda)^*, q \in \widehat G(\Gamma)^*, \ \widehat 
    p_\lact, \widehat 
    q_\ract$ are bijective and only depend on $r(p), s(p)$ [$r(q), s(q)$ respectively].

We will first show that, thanks to our hypothesis that $\Lambda, \Gamma$ are connected,  $\theta$ restricts to isomorphisms $\theta_x: \Omega_2^x \to \Gamma, \ \theta^w: \Omega_1^w \to \Lambda$ for each $x \in \Lambda^0, w \in \Gamma^0$. As $\theta: \Omega \to \Lambda \times \Gamma$ is an isomorphism, it suffices to show that $\theta$ sends all elements of $\Omega_2^x$ to $\{y\} \times \Gamma$ for a unique $y \in \Lambda^0$, and similarly that $\text{Im}(\theta(\Omega_1^w)) \subseteq \Lambda \times \{ z\}$ for a unique $z \in \Gamma^0.$  

Fix $q, q'\in G(\Gamma)^*$ and $x \in \Lambda^0$, and suppose $\theta(x, [q]) \in \{ y\} \times \Gamma$ and $\theta(x, [q']) \in \{ u\} \times \Gamma$.  We will show $u = y$. 
Since $\Gamma$ is connected, there exists a path $e^1_\pm  \cdots  e^n_\pm \in \widehat G(\Gamma)^*$ with $r(e^1_\pm) = s(q)$ and $s(e^n_\pm) = s(q').$  As $\theta$ is a degree-preserving isomorphism, for each $i$, 
$\theta (x, e^i) \in \{ x_i\} \times G(\Gamma)^1$ for some $x_i \in \Lambda^0$.    We therefore have 
\[ s(\theta(x, e^i)), r(\theta(x, e^i)) \in \{ x_i\} \times \Gamma^0 \text{ for all } i.\]
Since $\theta$ commutes with the source and range maps, the fact that $s(e^i_\pm) = r(e^{i+1}_\pm)$ implies that  for all $i < n,$
we have 
$$ \theta( s(x, e^i_\pm) ) \in \{ s(\theta(x, e^{i+1})), r(\theta(x, e^{i+1}))\} \subseteq \{ x_{i+1}\} \times \Gamma^0. $$   Similarly,   $\{ x_n\} \times \Gamma^0 \ni \theta (s(x, e^n_\pm))  = s(\theta(x, [q'])) \in \{ u\} \times \Gamma^0 $
and $\{ x_1\} \times \Gamma^0 \ni \theta(r(x, e^1_\pm)) = s(\theta(x, [q])) \in  \{ y\} \times\Gamma^0.$ It follows that 
$$ y = x_1 = \cdots = x_n = u$$ as desired.  The same argument will show that, for each $w \in \Gamma^0,$ $\text{Im}(\theta(\Omega_1^w)) \subseteq \Lambda \times \{z\}$ for a single $z \in \Gamma^0$. 

Our next goal is to prove that for any $\xi \in \widehat G(\Lambda)^*$, $\xi_\lact$ only depends on $r(\xi), s(\xi)$. Define $\tau_{v,w}: \{ w\}\times \Gamma \to \{v\}\times\Gamma$ such that $(w,\gamma)\mapsto (v,\gamma)$;  
if $\text{Im} \theta_x = \{ y\} \times \Gamma$, we write $
\tilde x := y \in \Lambda^0$.
It is enough to show that for any  $\xi\in \widehat G(\Lambda)^*$ with $s(\xi)=w$ and $r(\xi)=v$, we have  $\theta_{v} \widehat {\xi}_\lact=\tau_{\tilde v,\tilde w}\theta_{ w}$. 
To prove this, we set $\xi = p^1_\pm \cdots p^n_\pm$ with $p^i\in G(\Lambda)^*$ and induct on $n$. 

For the case $n=1$, we first assume $\xi = p^1$.  Fix $q \in G(\Gamma)^*$, and write $(\tilde w, \gamma) := \theta(w, [q]).$ Observe that 

\begin{align*}\theta((v, [p
^1_\lact(q)])([q_\ract(p^1)], s(q)))   & =\theta(([p^1], r(q))(w, [q])) \\ &= \theta([p^1], r(q)) (\tilde w, \gamma) =: (\lambda, y)(\tilde w, \gamma) \\
& = (r(\lambda), \gamma)(\lambda, s(\gamma)),\end{align*}
thanks to the factorization rule in $\Lambda \times \Gamma$.  Moreover, the factorization property implies that 
\[ \tilde v \times \Gamma \ni \theta(v, [p^1_\lact(q)]) = (r(\lambda), \gamma);\]
that is, $r(\lambda) = \tilde v $ and $\theta_v p^1_\lact(w, [q]) = (\tilde v, \gamma) = \tau_{\tilde v, \tilde w}\theta_w(w, [q])$, as desired.  In particular, as $\tau_{\tilde v, \tilde w}, \theta_w, \theta_v$ are $k$-graph isomorphisms, we can write $$\widehat p_\lact^1=\theta_v^{-1} \tau_{\tilde{v},\tilde{w}}\theta_w.$$

We pause briefly to observe that for any $p\in G(\Gamma)^*$ the equality above ensures bijectivity of $\widehat p_\lact$.
In particular, $ (\widehat{p}_\lact)^{-1}$ is well defined and also depends only on $s(p^{-1}), r(p^{-1}).$

Having established the base case, suppose now that for any path $\eta = m^1_\pm \cdots m^j_\pm \in \widehat G(\Lambda)^*$ with $j \leq n-1$, we have $\theta_{r(\eta)} \eta_\lact = \tau_{\widetilde{r(\eta)}, \widetilde{s(\eta)}} \theta_{s(\eta)}$. Given $\xi = p^1_\pm \cdots p^n_\pm \in \widehat G(\Lambda)^*$, write $\eta = p^2_\pm \cdots p^n_\pm$ and $u = r(\eta) = s(p^1_\pm) $. The inductive hypothesis yields  $(\widehat{p^1_\pm})_\lact= \theta_v^{-1} \tau_{\tilde v,\tilde u} \theta_u$ and $\widehat\eta_\lact= \theta_u^{-1} \tau_{\tilde u,\tilde w}\theta_w$. Since $\widehat\xi_\lact = (\widehat{p}^{1}_\pm)_\lact \circ \widehat\eta_\lact$, 
\[\widehat\xi_\lact =  (\widehat p^{1}_\pm)_\lact \circ \widehat\eta_\lact = \theta_v^{-1} \tau_{\tilde v, \tilde u} \theta_u\theta_u^{-1} \tau_{\tilde u,\tilde w}\theta_w = \theta_v^{-1}\tau_{\tilde v,\tilde w}\theta_w.\]

A symmetric argument 
will show that, if we write $\tau^{x,y}: \Lambda \times \{y\} \to \Lambda \times \{x\}$ for the map sending $(\lambda, y) $ to $(\lambda, x)$, then for any $\zeta \in \widehat G(\Gamma)^*$, 
\[ \theta^{{s(\zeta)}} \widehat{\zeta}_\ract = \tau^{\widetilde{s(\zeta)}, \widetilde{r(\zeta)}} \theta^{r(\zeta)}.\]
In particular, $\widehat \zeta_\ract$ is bijective and only depends on $s(\zeta), r(\zeta).$ \qedhere 

\end{proof}

\subsection{Applications: polytrees and $k$-trees}
In some situations, the structure of  one of the quasi-factors $\Gamma$ can force any quasi-product with $\Gamma$ as a quasi-factor to be a product $k$-graph.  For example, this occurs if $\Gamma$ is a polytree or a product of polytrees (see Theorem \ref{thm: polytree} below).

\begin{dfn}
\label{def: polytree}
\cite[$\S 9.6$]{deo-graphs} 
A {\em polytree} is a directed graph $E$ such that whenever $\xi, \xi'\in \widehat E$ satisfy $s(\xi) = s(\xi'), r(\xi ) = r(\xi')$, then $\xi = \xi'$.
\end{dfn}

\begin{remark}
    \label{rem: no-squares}
    The obvious generalization of Definition \ref{def: polytree} to $k$-graphs is problematic, because any commuting square $gh \sim h'g'$ in $\Lambda$ yields a cycle $\xi := h'g' h^{-1} g^{-1}$ in $\widehat G(\Lambda)$ with $r(\xi) = r(g) = s(\xi) $ but $\xi \not= s(g)$.

\end{remark}

\begin{thm}
\label{thm: polytree}
    If $\Omega$ is a quasi-product with quasi-factors $\Lambda$ a $k$-graph and $E$ a polytree, then $\Omega\cong \Lambda\times E$.
\end{thm}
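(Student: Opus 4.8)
The plan is to apply Theorem~\ref{thm:product-isom}: it suffices to show that $\Lambda$ and $E$ are both relaxed stable quasi-factors of $\Omega$. Since $E$ is connected, we may assume $\Lambda$ is connected as well; otherwise the connected components of $\Omega$ are indexed by those of $\Lambda$, and each is a quasi-product of a connected component of $\Lambda$ with $E$ (using that a polytree carries at most one $k$-graph structure: by (KG4) a two-colored length-$2$ path would need a color-reversed partner with the same endpoints, which a polytree lacks, so in fact every directed path is monochromatic and the factorization rule is forced, whence $\Omega_2^x\cong E$ for every $x$). Thus from now on $\Lambda$ is connected, and $E$ is connected as a polytree.

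The first step is to observe that $E$ is actually a \emph{stable} quasi-factor. A polytree has at most one edge between any ordered pair of vertices: two edges $g,g'\in G(E)^1$ with $s(g)=s(g')$, $r(g)=r(g')$ would yield distinct paths $g_+,g'_+$ in $\widehat G(E)^*$ with matching source and range, contradicting Definition~\ref{def: polytree}. For $e\in G(\Lambda)^1$ and $g\in G(E)^1$, Lemma~\ref{k-graph-mp-lem} and Proposition~\ref{prop:source-range-general} show $e_\lact(g)$ is an edge of $G(E)$ with the same color, source, and range as $g$, so $e_\lact(g)=g$. Proposition~\ref{prop:edges-enough} then gives that $E$ is stable, hence $\widehat p_\lact=\id$ for all $p$, and in particular $E$ is a relaxed stable quasi-factor.

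Next I check that $\Lambda$ is a relaxed stable quasi-factor, i.e.\ (the left-stable version of Theorem~\ref{thm:relaxed stable}, cf.\ Remark~\ref{rmk:right-stability}) that for every $\zeta\in\widehat G(E)^*$ the map $\widehat\zeta_\ract$ is bijective and depends only on $s(\zeta),r(\zeta)$. The dependence is automatic because in a polytree $\zeta$ is determined by its endpoints, so the substantive point is bijectivity of $\widehat q_\ract$ for a single edge $q\in G(E)^1$. Injectivity goes as in the proof of Theorem~\ref{thm: rho isom}: if $[q_\ract(p)]=[q_\ract(p')]$ then $p,p'$ agree in source, range, and degree, and since $p_\lact(q)=q=p'_\lact(q)$ by stability, Lemma~\ref{k-graph-mp-lem} gives $([p],r(q))(s(p),[q])=([p'],r(q))(s(p'),[q])$, so the factorization property forces $[p]=[p']$. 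For surjectivity, given $[p']\in\Omega_1^{s(q)}$ I apply the factorization property inside $\Omega$ to the composable product $(r(p'),q)\bigl([p'],s(q)\bigr)$, splitting its degree as $(d(p'),0)+(0,d(q))$; Lemma~\ref{k-graph-mp-lem} identifies the two factors as $\omega_1\lact\omega_2=(r(p'),q)$ and $\omega_1\ract\omega_2=([p'],s(q))$, and chasing sources and ranges in $\Omega^0=\Lambda^0\times E^0$ — together with the polytree uniqueness of the $E$-edge that appears — forces $\omega_1=([p],r(q))$ with $\widehat q_\ract([p])=[p']$. Since $(q^1q^2)_\ract=q^2_\ract\circ q^1_\ract$ ($\ract$ being a right action), $\widehat q_\ract$ is bijective for all $q\in G(E)^*$, hence so is $\widehat\zeta_\ract$ for all $\zeta\in\widehat G(E)^*$. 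Theorem~\ref{thm:product-isom} now yields $\Omega\cong\Lambda\times E$.

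The main obstacle is the surjectivity of $\widehat q_\ract$ in the generality stated: without a row-finiteness hypothesis one cannot simply quote Theorem~\ref{thm: rho isom}, so the factorization argument in $\Omega$ must be carried out by hand, with care taken to track which coordinate of $\Omega^0=\Lambda^0\times E^0$ each source and range inhabits. (If $\Omega$ is row-finite, Theorem~\ref{thm: rho isom} supplies this surjectivity at once and the argument shortens.)
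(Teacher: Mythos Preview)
Your argument follows the same overall strategy as the paper: show that $E$ is a stable quasi-factor, show that $\Lambda$ is relaxed stable, and then invoke the product characterization. The paper establishes stability of $E$ via Lemma~\ref{lem: stab-guarantee} (the vertex-fixing automorphism group of a polytree is trivial) rather than your direct edge-uniqueness argument, and obtains bijectivity of $\widehat q_\ract$ by quoting Theorem~\ref{thm: rho isom}; it then passes through Theorem~\ref{thm:relaxed stable} to a doubly-stable $\widetilde\Omega$ before applying Theorem~\ref{thm:product-isom}.

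The one genuine difference is your surjectivity argument for $\widehat q_\ract$. The paper simply appeals to Theorem~\ref{thm: rho isom}, which carries a row-finiteness hypothesis that is not assumed in the statement of Theorem~\ref{thm: polytree}; your factorization-in-$\Omega$ argument sidesteps this, so your proof is mildly more general (or, if you prefer, patches a small gap). On the other hand, your reduction to connected $\Lambda$ in the first paragraph is a bit loose: the claim that each connected component of $\Omega$ is again a quasi-product with quasi-factor $E$ needs the observation that $E$ is a $1$-graph, so $\Omega_2^x$ (having $1$-skeleton $G(E)$) is automatically isomorphic to $E$ and furnishes the required embedding $\psi_2$; your parenthetical about two-colored paths is unnecessary here. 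The paper does not address connectivity at all, so you are being more careful than it is.
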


\begin{proof}
    We begin by demonstrating that $E$ is a stable quasi-factor. To this end, we appeal to Lemma \ref{lem: stab-guarantee}. 
    Let $\sigma$ 
    be a vertex fixing graph automorphism of $E$. Then $s(\sigma(e))=s(e)$ and $r(\sigma(e))=r(e)$ for all $e\in E^1$, and so $e=\sigma(e)$ and thus $\sigma=\id$. Thus, Lemma \ref{lem: stab-guarantee} forces $E$ to be a stable quasi-factor.

    We now demonstrate that $\Lambda$ is relaxed stable. To this end, let $\xi,\xi'\in \widehat{E}$ such that $s(\xi)=s(\xi')$ and $r(\xi)=r(\xi')$. 
    Then $\xi=\xi'$ and in particular $\xi_\ract=\xi'_\ract$. 
    Furthermore, Theorem \ref{thm: rho isom} guarantees that $\widehat q_\ract$ is a bijection for all $q \in \widehat E$.
    We conclude that $\Lambda$ is relaxed stable. 

    Using Theorem \ref{thm:relaxed stable}, we obtain an isomorphism $\Theta: \Omega \to \widetilde{\Omega}$ such that $\Lambda$ and $E$ are stable quasi-factors. By Theorem \ref{thm:product-isom}, $\widetilde{\Omega}\cong \Lambda\times E$, and hence $\Omega \cong \Lambda \times E$.
\end{proof}

We may appeal to the associativity of graph products to generalize polytrees to the higher-rank graph setting in a more appropriate way. 

\begin{dfn}
    \label{def: k-trees}
    Let $\Gamma$ be a $k$-graph such that $\Gamma\cong \prod E_i$. If each $E_i$ is a polytree, then we call $\Gamma$ a $k$-tree.
\end{dfn}

One notable $k$-tree is the lattice $\Omega_k=\{(m,n)\in\N^k\times \N^k: m\leq n\}$ with $d(m,n)=n-m$, $s(m,n)=n$, and $r(m,n)=m$ from \cite[Example 1.7.ii]{kp}, as we detail in the next example.  

\begin{example}
    \label{ex: lattice}
    Let $E$ be the digraph with vertices $\{w_i\}_{i\in \N}$ and edges $\{e_i\}_{i\in \N}$ with $s(e_i)=w_{i+1}$ and $r(e_i)=w_i$. Observe that paths in $E^*$ are in bijection with  pairs of integers $(j, \ell)$ with $j \leq \ell$; the source of the associated path $\omega_{(j,\ell )}:= e_j\cdots e_{\ell-1}$ in $E^*$ is $w_\ell$ and the range is $w_j$. Fix some rank $k$, and define $\Omega:=\prod_{i=1}^k E^*$. An element $m = (m_1, \ldots, m_k)\in \N^k$ corresponds  to the vertex $w_m:=(w_{m_1},\cdots, w_{m_k})\in \Omega^0$.  $w_n\in\Omega^0$. 
    
    If $n\geq m$,  
    define $\omega_{(m,n)}:=(\omega_{(m_1,n_1)},\cdots,\omega_{(m_k,n_k)})\in \Omega$. The functor 
    $(m,n)\mapsto \omega_{(m,n)}$ yields an isomorphism $\Omega_k\cong \Omega$.
\end{example}

\begin{cor}
    \label{cor: k-trees}
    If $\Omega$ is a quasi-product with quasi-factors $\Lambda$and $\Gamma$, with 
    $\Gamma$ a $k_2$-tree, then $\Omega\cong \Lambda\times\Gamma$.
\end{cor}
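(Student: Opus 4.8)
The plan is to induct on $k_2$, at each stage peeling off one polytree factor of $\Gamma$ and invoking Theorem~\ref{thm: polytree}. Since $\Gamma$ is a $k_2$-tree we may write $\Gamma \cong E_1 \times \cdots \times E_{k_2}$ with each $E_i$ a polytree (so a $1$-graph); when $k_2 = 1$ the statement is precisely Theorem~\ref{thm: polytree}, which is the base case. As in the proof of that theorem, I would first reduce to the case where $\Lambda$ is connected: writing $G(\Lambda) = \bigsqcup_\alpha G(\Lambda_\alpha)$ as a disjoint union of connected components and using $G(\Omega) \cong G(\Lambda) \square G(\Gamma)$, the $k$-graph $\Omega$ splits as a disjoint union of quasi-products of the connected $k_1$-graphs $\Lambda_\alpha$ with $\Gamma$, and $\Lambda \times \Gamma$ splits compatibly; I would likewise assume $\Gamma$ (hence each $E_i$) connected.

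For the inductive step, put $\Gamma' = E_1 \times \cdots \times E_{k_2 - 1}$, a $(k_2 - 1)$-tree, so $\Gamma \cong \Gamma' \times E_{k_2}$ and, by associativity of the product, $\Lambda \times \Gamma \cong (\Lambda \times \Gamma') \times E_{k_2}$. Order the colours so that $\Gamma'$ uses $e_{k_1 + 1}, \dots, e_{k_1 + k_2 - 1}$ and $E_{k_2}$ uses $e_{k_1 + k_2}$. Exactly as in Proposition~\ref{dis-union-prop}, the sub-$(k_1 + k_2 - 1)$-graph $\Omega' := d^{-1}(\N^{k_1 + k_2 - 1} \times \{0\})$ of $\Omega$ decomposes as $\bigsqcup_{u \in E_{k_2}^0} \Xi_u \times \{u\}$ with $G(\Xi_u) \cong G(\Lambda) \square G(\Gamma') \cong G(\Lambda \times \Gamma')$ for every $u$. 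Using this together with associativity of the Cartesian graph product, one checks that for each $u$ the graph $\Omega$ is a quasi-product with quasi-factors $\Xi_u$ and $E_{k_2}$: the identification $G(\Omega) \cong G(\Xi_u \times E_{k_2})$ is immediate, the inclusion $\xi \mapsto (\xi, u)$ embeds $\Xi_u$ into $\Omega$, and restricting $\psi_2$ to $\{y_0\} \times E_{k_2} \subseteq \Gamma$ embeds $E_{k_2}$. Theorem~\ref{thm: polytree} then gives $\Omega \cong \Xi_u \times E_{k_2}$.

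It remains to recognise $\Xi_u$ as a quasi-product of $\Lambda$ and $\Gamma'$. Choose $u = u_1$ so that the connected image $\psi_1(\Lambda)$ lies in $\Xi_{u_1} \times \{u_1\}$; then $\psi_1$ embeds $\Lambda$ into $\Xi_{u_1}$, with the degree condition of Definition~\ref{def:quasi-factor} inherited from that of $\Omega$. Since $E_{k_2}$ is a polytree, between any ordered pair of vertices of $G(E_{k_2})$ there is at most one edge, so $G(E_{k_2})$ --- and hence $G(E_{k_2})$ as a one-skeleton --- has only the identity vertex-fixing colour-preserving automorphism, and Lemma~\ref{lem: stab-guarantee} makes $E_{k_2}$ a stable quasi-factor; as it is also connected, Theorem~\ref{thm: rho isom} (applied with $E_{k_2}$ as the stable quasi-factor and $\Xi_{u_1}$ as the other quasi-factor) yields isomorphisms $\Xi_u \cong \Xi_{u_1}$ for all $u$. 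Composing the embedding of $\Gamma'$ into whichever component contains $\psi_2(\Gamma' \times \{y_0\})$ with such an isomorphism gives an embedding $\Gamma' \hookrightarrow \Xi_{u_1}$ with the requisite degree compatibility. Thus $\Xi_{u_1}$ is a quasi-product with quasi-factors $\Lambda$ and the $(k_2 - 1)$-tree $\Gamma'$, so the inductive hypothesis gives $\Xi_{u_1} \cong \Lambda \times \Gamma'$, and therefore $\Omega \cong \Xi_{u_1} \times E_{k_2} \cong \Lambda \times \Gamma' \times E_{k_2} \cong \Lambda \times \Gamma$, closing the induction.

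The step needing the most care is the verification in the inductive step that, after peeling off $E_{k_2}$ via Theorem~\ref{thm: polytree}, the residual $k$-graph is genuinely a quasi-product of $\Lambda$ with $\Gamma'$ --- in particular, arranging the copies of $\Lambda$ and of $\Gamma'$ to sit in the \emph{same} component $\Xi_{u_1}$ of $\Omega'$, which is where stability of $E_{k_2}$ and Theorem~\ref{thm: rho isom} come in. An alternative that sidesteps this bookkeeping is to argue directly from Theorem~\ref{thm:product-isom}: because $G(\Gamma) \cong \square_i G(E_i)$ with each $G(E_i)$ a polytree, between any two vertices of $G(\Gamma)$ there is at most one edge of each colour, so $G(\Gamma)$ has no nontrivial vertex-fixing colour-preserving automorphism and Lemma~\ref{lem: stab-guarantee} makes $\Gamma$ a (relaxed) stable quasi-factor; and one shows $\widehat q_\ract$ depends only on $s(q), r(q)$ by noting that $\pi_1(\widehat G(\Gamma))$ is generated by the four-cycles coming from the commuting squares of $\Gamma$ (the polytree factors contribute no cycles), and that (KG4), applied to the three-coloured configuration spanned by a $\Lambda$-edge together with such a commuting square, forces the $\ract$-monodromy around each of these four-cycles to be trivial. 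Theorem~\ref{thm:product-isom} then delivers $\Omega \cong \Lambda \times \Gamma$.
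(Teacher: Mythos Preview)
Your main inductive argument is essentially the paper's own proof: both peel off one polytree factor $E_{k_2}$ via Theorem~\ref{thm: polytree} and then apply the inductive hypothesis to the residual $(k_2-1)$-tree $\Gamma'$. You are more scrupulous than the paper on one point it glosses over---namely, ensuring that the embedded copies of $\Lambda$ and of $\Gamma'$ land in the \emph{same} component $\Xi_{u_1}$ of $\Omega' = d^{-1}(\N^{k_1+k_2-1}\times\{0\})$. The paper simply asserts that $\psi_1,\psi_2$ ``may be restricted'' to injections into $\Omega'$; your use of connectedness together with the stability of $E_{k_2}$ and the isomorphisms $\Xi_u\cong\Xi_{u_1}$ from Theorem~\ref{thm: rho isom} is the right way to make this honest.

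Your alternative via Theorem~\ref{thm:product-isom} is a route the paper does not take, and it is viable. Since $G(\Gamma)\cong \square_i G(E_i)$ with each $E_i$ a polytree, there is at most one edge of each colour between any two vertices, so Lemma~\ref{lem: stab-guarantee} makes $\Gamma$ stable outright; Theorem~\ref{thm: rho isom} then gives bijectivity of each $\widehat q_\ract$. For relaxed stability of $\Lambda$, the key observation is that $\ract$ is defined at the level of $\Omega_2$-equivalence classes, so $\widehat{(gh)}_\ract=\widehat{(h'g')}_\ract$ whenever $gh\sim_\Gamma h'g'$; this kills the monodromy around every commuting-square four-cycle, and (together with the trivially trivial monodromy around the bigons $e_+e_-$ in $\widehat G(\Gamma)$) these generate $\pi_1(\widehat G(\Gamma))$ because a Cartesian product of trees has only the square-cells as $2$-cells. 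This direct argument avoids the induction entirely and is arguably cleaner, though the $\pi_1$ claim deserves a sentence of justification if you write it up.
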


\begin{proof}
    We prove this by induction on $k_2$. The base case is assured by Theorem \ref{thm: polytree}. We now suppose the corollary holds for $(k_2-1)$-trees. 

    Let $\Gamma$ be a $k_2$-tree. We write $\Gamma\cong \Gamma'\times E$ for some $(k_2-1)$-tree $\Gamma'$ and some polytree $E$. Next, we leverage associativity of Cartesian products to observe that $\Omega$ may be viewed as a quasi-product with quasi-factors $\Omega'$ and $E$, where $\Omega'$ is a quasi-product with quasi-factors $\Lambda$ and $\Gamma'$. Theorem \ref{thm: polytree} assures us that $\Omega\cong \Omega'\times E$. 
    
    The injections $\psi_1:\Lambda\into \Omega$ and $\psi_2:\Gamma\into \Omega$ may be restricted to injections $\psi_1:\Lambda \into \Omega'$ and $\psi_2: \Gamma' \into \Omega'$. That is, the quasi-factor $\Omega'$ is itself a quasi-product with quasi-factors $\Lambda$ and $\Gamma'$. We then use the inductive hypothesis to conclude that $\Omega'\cong \Lambda\times \Gamma'$ giving us $\Omega\cong (\Lambda\times\Gamma')\times E = \Lambda\times (\Gamma'\times E)=\Lambda\times \Gamma$.
\end{proof}

\bibliography{products}
\bibliographystyle{plain}

    \end{document}